\documentclass[12pt, reqno, a4paper, oneside]{amsart}


\usepackage{amsmath,amssymb,amsthm,eucal}
\usepackage[a4paper]{geometry}
\usepackage[dvipdfm,colorlinks=true]{hyperref}
\usepackage[all]{xy}
\usepackage{mathtools,here}


\newtheorem{thm}{Theorem}[section]
\newtheorem{prop}[thm]{Proposition}
\newtheorem{lem}[thm]{Lemma}
\newtheorem{cor}[thm]{Corollary}

\newtheorem{abcthm}{Theorem}
\newtheorem{abccor}[abcthm]{Corollary}

\theoremstyle{definition}

\newtheorem{exam}[thm]{Example}

\newtheorem{quest}[thm]{Question}

\theoremstyle{remark}
\newtheorem{rem}[thm]{Remark}


\newcommand{\Z}{\mathbb{Z}}
\newcommand{\Q}{\mathbb{Q}}
\newcommand{\R}{\mathbb{R}}

\newcommand{\F}{\mathbb{F}}
\renewcommand{\P}{\mathcal{P}}
\renewcommand{\H}{\mathcal{H}}
\renewcommand{\S}{\mathcal{S}}

\DeclareMathOperator{\Hom}{Hom}
\DeclareMathOperator{\tr}{tr}
\DeclareMathOperator{\sgn}{sgn}
\DeclareMathOperator{\Emb}{Emb}

\begin{document}

  \title{Spaces of commuting elements in the classical groups}
    \author{Daisuke Kishimoto}
    \address{Department of Mathematics, Kyoto University, Kyoto, 606-8502, Japan}
    \email{kishi@math.kyoto-u.ac.jp}

    \author{Masahiro Takeda}
    \address{Department of Mathematics, Kyoto University, Kyoto, 606-8502, Japan}
    \email{m.takeda@math.kyoto-u.ac.jp}

    \subjclass[2010]{
      57T10, 
      20C30 
      (primary),
      22E15 
      (secondary)
    }
    \keywords{Space of commuting elements, Classical group, Poincar\'e series, Cohomology, Homological stability}

    \begin{abstract}
      Let $G$ be the classical group, and let $\Hom(\Z^m,G)$ denote the space of commuting $m$-tuples in $G$. First, we refine the formula for the Poincar\'e series of $\Hom(\Z^m,G)$ due to Ramras and Stafa by assigning (signed) integer partitions to (signed) permutations. Using the refined formula, we determine the top term of the Poincar\'e series, and apply it to prove the dependence of the topology of $\Hom(\Z^m,G)$ on the parity of $m$ and the rational hyperbolicity of $\Hom(\Z^m,G)$ for $m\ge 2$. Next, we give a minimal generating set of the cohomology of $\Hom(\Z^m,G)$ and determine the cohomology in low dimensions. We apply these results to prove homological stability for $\Hom(\Z^m,G)$ with the best possible stable range. Baird proved that the cohomology of $\Hom(\Z^m,G)$ is identified with a certain ring of invariants of the Weyl group of $G$, and our approach is a direct calculation of this ring of invariants.
    \end{abstract}

  \maketitle

\setcounter{tocdepth}{1}
\tableofcontents


\section{Introduction}\label{Introduction}

Let $G$ be a compact connected Lie group. The \emph{space of commuting elements} in $G$, denoted by $\Hom(\Z^m,G)$, is the subspace of the Cartesian product $G^m$ consisting of $(g_1,\ldots,g_m)\in G^m$ such that $g_1,\ldots,g_m$ are pairwise commutative. The topology of $\Hom(\Z^m,G)$ has been studied intensely in recent years; in particular, its homological and homotopical features have been studied by a number of authors \cite{AC,ACG,B,BJS,CS2,C,GPS,RS1,RS2,STG}. On the other hand, as in \cite{BFM}, $\Hom(\Z^m,G)$ is identified with the based moduli space of flat $G$-bundles over an $m$-torus. Then it has been studied also in the context of geometry and physics, including work of Kac and Smilga \cite{KS} and Witten \cite{W1,W2} on supersymmetric Yang-Mills theory. We refer the reader to a comprehensive survey \cite{CS1} for the basics of $\Hom(\Z^m,G)$ and a list of related work.

The purpose of this paper is two-fold. Let $G$ be the classical group. First, we refine the formula for the Poincar\'e series of $\Hom(\Z^m,G)$ due to Ramras and Stafa \cite{RS1} in terms of integer partitions. Using the refined formula, we determine the top term of the Poincar\'e series, which applies to prove the dependence of the topology of $\Hom(\Z^m,G)$ and the rational hyperbolicity of $\Hom(\Z^m,G)$. Second, based on the description of the cohomology of $\Hom(\Z^m,G)$ due to Baird \cite{B} in terms of a certain ring of invariants of the Weyl group of $G$, we give a minimal generating set of the cohomology and determine the cohomology in low dimensions. These results apply to prove homological stability for $\Hom(\Z^m,G)$ with the best possible stable range.


\subsection{Poincar\'e series}

Let $\Hom(\Z^m,G)_1$ denote the path-component of $\Hom(\Z^m,G)$ containing $(1,\ldots,1)\in G^m$, and let $W$ denote the Weyl group of $G$. In \cite{RS1}, Ramras and Stafa gave a formula for the Poincar\'e series of $\Hom(\Z^m,G)_1$:
\begin{equation}
  \label{Ramras-Stafa intro}
  P(\Hom(\Z^m,G)_1;t)=\frac{1}{|W|}\prod_{i=1}^r(1-t^{2d_i})\sum_{w\in W}\frac{\det(1+tw)^m}{\det(1-t^2w)}
\end{equation}
Remarks on this formula are in order. The Poincar\'e series of $\Hom(\Z^m,G)_1$ means that of the cohomology of $\Hom(\Z^m,G)_1$ over a field of characteristic zero or prime to the order of $W$. Then the Poincar\'e series of $\Hom(\Z^m,G)_1$ does not depend on the coefficient field unless it is of characteristic zero or prime to $|W|$. The integers $d_1,\ldots,d_r$ are the characteristic degrees of the Weyl group $W$, and the determinants are taken by using the reflection group structure of $W$.

Although the formula \eqref{Ramras-Stafa intro} is beautiful, it is less computable. For example, we cannot determine the degree of the Poincar\'e series directly from the formula \eqref{Ramras-Stafa intro}. Then in order to get information from the Poincar\'e series, we must refine the formula into a more computable form. Suppose $G$ is the classical group. Then the Weyl group $W$ is a symmetric group, a signed symmetric group or its subgroup. So we can assign (signed) integer partitions to elements of $W$ via the (signed) cyclic decomposition of (signed) permutations. This enables us to refine the formula \eqref{Ramras-Stafa intro}, and we give it for unitary groups here. The formulae for other classical groups will be given in Section \ref{Poincare series}.

\begin{abcthm}\label{Poincare intro}
  For $\lambda=(\lambda_1^{n_1},\ldots,\lambda_l^{n_l})\vdash k\le n$, let
  \[
    p^{m,n}_\lambda(t)=\frac{t^{(m-2)(n-k)}}{\lambda_1\cdots\lambda_ln_1!\cdots n_l!}\prod_{i=1}^l\left((-1)^{m(\lambda_i-1)}t^{(m-2)\lambda_i}+\frac{(1+(-1)^{\lambda_i+1}t^{\lambda_i})^m}{1-t^{2\lambda_i}}\right)^{n_i}
  \]
  where we set $q_\lambda^{m,n}(t)=t^{(m-2)n}$ if $\lambda$ is the empty partition. Then the Poincar\'e series of $\Hom(\Z^m,U(n))_1$ is given by
  \[
  P(\Hom(\Z^m,U(n))_1;t)=
  \begin{cases}
    \displaystyle\prod_{i=1}^n(1-t^{2i})\sum_{k=n-1}^n\sum_{\lambda\vdash k}(-1)^{n+k}p_\lambda^{m,n}(t)&(m\text{ even})\\
    \displaystyle\prod_{i=1}^n(1-t^{2i})\sum_{k=0}^n\sum_{\lambda\vdash k}(-1)^kp_\lambda^{m,n}(t)&(m\text{ odd}).
  \end{cases}
  \]
\end{abcthm}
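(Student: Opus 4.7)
The plan is to expand the Ramras--Stafa formula \eqref{Ramras-Stafa intro} for $G=U(n)$---where $W=S_n$ acts by the natural permutation representation on the maximal torus $T^n$ and the characteristic degrees are $d_i=i$---and then repackage the resulting cycle-type sum through the exponential formula for cycle indices.

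First I would organize $\sum_{w\in S_n}$ by the cycle type $\lambda=(\lambda_1^{n_1},\ldots,\lambda_l^{n_l})\vdash n$ of $w$. Since the natural representation decomposes over the cycles of $w$, the classical roots-of-unity identity $\prod_{j=0}^{\ell-1}(1+t\zeta_\ell^j)=1+(-1)^{\ell+1}t^\ell$ yields $\det(1+tw)=\prod_i(1+(-1)^{\lambda_i+1}t^{\lambda_i})^{n_i}$ and $\det(1-t^2w)=\prod_i(1-t^{2\lambda_i})^{n_i}$. Using the conjugacy class size $n!/z_\lambda$ with $z_\lambda:=\prod_i\lambda_i^{n_i}n_i!$, the formula \eqref{Ramras-Stafa intro} reduces to
\[
P(\Hom(\Z^m,U(n))_1;t)=\prod_{i=1}^n(1-t^{2i})\sum_{\lambda\vdash n}\frac{1}{z_\lambda}\prod_iB_{\lambda_i}^{n_i},\qquad B_j:=\frac{(1+(-1)^{j+1}t^j)^m}{1-t^{2j}}.
\]

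Next I would introduce a formal variable $z$ and set $F_B(z):=\exp\bigl(\sum_{j\ge1}B_jz^j/j\bigr)$, so that the exponential formula identifies the inner cycle-index sum above with $[z^n]F_B(z)$. The central observation is that the auxiliary sequence $A_j:=(-1)^{m(j-1)}t^{(m-2)j}$ appearing inside $p_\lambda^{m,n}$ admits an elementary generating function: for $m$ even, $A_j=(t^{m-2})^j$, so $F_A(z):=\exp\bigl(\sum_jA_jz^j/j\bigr)=(1-t^{m-2}z)^{-1}$; for $m$ odd, $A_j=(-1)^{j-1}(t^{m-2})^j$, so $F_A(z)=1+t^{m-2}z$. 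Multiplicativity of $\exp$ then gives $F_{A+B}(z)=F_A(z)F_B(z)$, and by construction $[z^k]F_{A+B}(z)$ equals $t^{-(m-2)(n-k)}\sum_{\lambda\vdash k}p_\lambda^{m,n}(t)$.

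The theorem now follows by extracting $[z^n]F_B(z)=[z^n]\bigl(F_{A+B}(z)/F_A(z)\bigr)$. When $m$ is even, $1/F_A(z)=1-t^{m-2}z$ is a two-term polynomial, so only the indices $k=n-1,n$ contribute and the sum collapses to the claimed two-term form with signs $(-1)^{n+k}$. When $m$ is odd, $1/F_A(z)=\sum_{j\ge0}(-t^{m-2}z)^j$ is an infinite geometric series, producing a signed sum over all $k=0,1,\ldots,n$, with the empty partition at $k=0$ contributing the constant $t^{(m-2)n}$ matching the stated convention. The main creative step---and the only real obstacle---is recognizing the closed form of $F_A(z)$, which is rational in the even case and linear in the odd case; this dichotomy is exactly what forces the two different ranges of $k$ in the statement, and it is what converts the otherwise opaque formula \eqref{Ramras-Stafa intro} into something genuinely computable.
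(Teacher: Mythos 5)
Your proposal takes a genuinely different route from the paper. The paper expands $\prod_i\bigl((-1)^{m(|c_i|-1)+1}t^{(m-2)|c_i|}+q^m_{|c_i|}(t)\bigr)$ directly, groups terms by the partition $\lambda$ embedded in the cycle type of $w$ and the number $i$ of excluded cycles, and then kills most of the sum using Lemma \ref{counting} together with the Stirling identities $\sum_{i}(-1)^i{n-k\brack i}=0$ (for $n-k\ge 2$) and $\sum_i{n-k\brack i}=(n-k)!$. You instead recognize $\sum_{\lambda\vdash n}\frac{1}{z_\lambda}\prod B_{\lambda_i}^{n_i}$ as $[z^n]F_B(z)$ via the exponential formula, factor $F_B=F_{A+B}/F_A$, and observe that $F_A(z)$ collapses to $(1-t^{m-2}z)^{-1}$ (even $m$) or $1+t^{m-2}z$ (odd $m$). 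This is cleaner: it makes the two-term truncation for even $m$ and the full alternating sum for odd $m$ structurally transparent rather than an aftermath of Stirling-number cancellations, and it replaces Lemmas \ref{counting} and \ref{Stirling} with the standard cycle-index identity. Both arguments are valid.

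One point you should tighten: you never actually pin down the sign in the odd case, writing only ``a signed sum over all $k$.'' Carrying your argument through, $1/F_A(z)=\sum_{j\ge 0}(-t^{m-2}z)^j$ gives
\[
[z^n]F_B(z)=\sum_{j=0}^n(-1)^j\sum_{\lambda\vdash n-j}p_\lambda^{m,n}(t)=\sum_{k=0}^n(-1)^{n-k}\sum_{\lambda\vdash k}p_\lambda^{m,n}(t),
\]
i.e.\ the sign is $(-1)^{n-k}$, not the $(-1)^k$ appearing in the statement. These differ by a global factor of $(-1)^n$. A quick check with $n=1$, $m=1$ (where $\Hom(\Z,U(1))_1=S^1$ should give $1+t$) shows that $(-1)^{n-k}$ is the correct sign and the printed $(-1)^k$ is a typo; the same slip appears in the paper's own proof, where the exponent is written as $(-1)^{mk+(m+1)i}$ but should read $(-1)^{m(n-k)+(m+1)i}$. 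So your derivation is in fact right---you should just record the sign $(-1)^{n-k}$ explicitly and note that it corrects the stated formula rather than claiming to match it.
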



\subsection{Top terms}

Using the formula  in Theorem \ref{Poincare intro} together with enumeration of (signed) integer partitions, we can determine the top term of the Poincar\'e series of $\Hom(\Z^m,G)$ when $G$ is the classical group.

\begin{abcthm}\label{top term intro}
  The top term of the Poincar\'e series of $\Hom(\Z^m,U(n))_1$ is $t^{n^2+(m-1)n}$ for $m$ odd and $\binom{m+n-2}{m-1}t^{n^2+(m-2)n+1}$ for $m$ even.
\end{abcthm}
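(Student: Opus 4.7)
The plan is to extract the leading behavior of each rational function $p^{m,n}_\lambda(t)$ as $t\to\infty$, assemble them via the formula of Theorem~\ref{Poincare intro}, and multiply by $\prod_{i=1}^n(1-t^{2i})\sim(-1)^n t^{n(n+1)}$.

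The key preliminary step---and the main obstacle---is the asymptotic analysis of the building block
\[
f_\ell(t):=(-1)^{m(\ell-1)}t^{(m-2)\ell}+\frac{(1+(-1)^{\ell+1}t^\ell)^m}{1-t^{2\ell}}
\]
for each positive integer $\ell$. The two summands have matching top degree $(m-2)\ell$, and a short calculation shows that these contributions cancel exactly. Expanding the fractional piece to one higher order in $1/t$---essentially computing $(1+s)^{m-1}/(1-s)$ or $(1-s)^{m-1}/(1+s)$ at $s=t^\ell\to\infty$, depending on the parity of $\ell$---yields
\[
f_\ell(t)=c_\ell\,t^{(m-3)\ell}+\mathrm{O}(t^{(m-3)\ell-1}),\qquad c_\ell=\begin{cases}-m&\text{if }\ell\text{ is odd,}\\(-1)^m m&\text{if }\ell\text{ is even.}\end{cases}
\]
Consequently, for $\lambda=(\lambda_1^{n_1},\dots,\lambda_l^{n_l})\vdash k$,
\[
p^{m,n}_\lambda(t)\sim\frac{1}{z_\lambda}\prod_{i=1}^l c_{\lambda_i}^{n_i}\cdot t^{(m-2)n-k}\qquad\text{as }t\to\infty,
\]
with $z_\lambda=\prod_i\lambda_i^{n_i}n_i!$. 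The leading degree $(m-2)n-k$ is strictly decreasing in $k$.

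When $m$ is odd, every $c_\ell=-m$, so the unique top-degree contribution in the sum of Theorem~\ref{Poincare intro} comes from the empty partition ($k=0$, $p^{m,n}_\emptyset(t)=t^{(m-2)n}$); all partitions with $k\ge 1$ contribute in strictly lower degree. Multiplying by $\prod(1-t^{2i})$ gives leading term $\pm t^{n(n+1)+(m-2)n}=\pm t^{n^2+(m-1)n}$, and since the Poincar\'e polynomial has non-negative integer coefficients, the sign must be $+$ and the coefficient equal to $1$.

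When $m$ is even, $c_\ell=(-1)^\ell m$, so $\prod_i c_{\lambda_i}^{n_i}=(-1)^k m^{\ell(\lambda)}$ with $\ell(\lambda)=\sum_i n_i$. The sum in Theorem~\ref{Poincare intro} is restricted to $k\in\{n-1,n\}$, and the top degree $(m-3)n+1$ is attained only at $k=n-1$. Summing over $\lambda\vdash n-1$ against the prefactor $(-1)^{n+k}$, the leading coefficient reduces (up to an overall sign) to
\[
\sum_{\lambda\vdash n-1}\frac{m^{\ell(\lambda)}}{z_\lambda}=\binom{m+n-2}{n-1},
\]
which follows from the classical cycle-index identity $\sum_{\lambda\vdash N}x^{\ell(\lambda)}/z_\lambda=\binom{x+N-1}{N}$---equivalent to $\sum_{\sigma\in S_N}x^{c(\sigma)}=x(x+1)\cdots(x+N-1)$---at $x=m$, $N=n-1$. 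Multiplication by $\prod(1-t^{2i})$ then yields the claimed top term $\binom{m+n-2}{m-1}\,t^{n^2+(m-2)n+1}$.
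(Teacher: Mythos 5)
Your proof is correct and follows essentially the same route as the paper: bound the degree of each $p^{m,n}_\lambda(t)$ by extracting the leading asymptotics of the building block $f_\ell(t)$ (after the cancellation of the $t^{(m-2)\ell}$ terms), identify the top-degree contributions (the empty partition for $m$ odd, partitions of $n-1$ for $m$ even), and evaluate the resulting sum via the Stirling-number identity of Lemma~\ref{Stirling}, which is exactly your cycle-index formula $\sum_{\lambda\vdash N}x^{\ell(\lambda)}/z_\lambda=\binom{x+N-1}{N}$. The only notable differences are cosmetic: you compute the constants $c_\ell$ explicitly and invoke positivity of the Poincar\'e series to fix the overall sign, and you (correctly) use $z_\lambda=\prod_i\lambda_i^{n_i}n_i!$ in place of the $\theta(\lambda)$ as printed in the paper, where the exponents $n_i$ on the parts appear to have been dropped; with that reading of $\theta$, your computation and the paper's agree.
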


The top terms for other classical groups will be also determined in Section \ref{Top terms}. Clearly, the formula in Theorem \ref{Poincare intro} depends on the parity of $m$, and Theorem \ref{top term intro} shows that the Poincar\'e series themselves depend on the parity of $m$. This dependence can be stated in terms of palindromicity.

\begin{abccor}\label{palindromic intro}
  Let $G$ be the classical group which is neither trivial nor $S^1$. Then the Poincar\'e series of $\Hom(\Z^m,G)_1$ is palindromic if and only if $m$ is odd.
\end{abccor}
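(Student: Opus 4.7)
My plan is to split the proof by the parity of $m$: the direction \emph{$m$ odd implies palindromic} follows from a direct manipulation of the Ramras--Stafa formula \eqref{Ramras-Stafa intro}, while \emph{$m$ even implies not palindromic} is deduced by comparing the top coefficient given by Theorem \ref{top term intro} with the constant term.

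\textbf{Odd $m$.} I would substitute $t\mapsto 1/t$ on the right-hand side of \eqref{Ramras-Stafa intro} and simplify using
\[
\det(I+t^{-1}w)=t^{-r}(\det w)\det(I+tw^{-1}),\quad \det(I-t^{-2}w)=(-1)^r t^{-2r}(\det w)\det(I-t^2 w^{-1}),
\]
together with $\prod_{i=1}^{r}(1-t^{-2d_i})=(-1)^r t^{-2\sum_i d_i}\prod_{i=1}^{r}(1-t^{2d_i})$, where $r$ is the rank of $G$. Reindexing the sum over $W$ via the bijection $w\mapsto w^{-1}$ (which preserves $\det$), one obtains
\[
P(\Hom(\Z^m,G)_1;1/t)=t^{-D}\cdot\frac{1}{|W|}\prod_{i=1}^{r}(1-t^{2d_i})\sum_{w\in W}(\det w)^{m-1}\frac{\det(I+tw)^m}{\det(I-t^2 w)},
\]
with $D=2|R^+|+rm$, where $|R^+|=\sum_i(d_i-1)$ is the number of positive roots. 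Because $W$ is a finite reflection group, $\det w\in\{\pm 1\}$, so for odd $m$ the factor $(\det w)^{m-1}$ is identically $1$; the right-hand side collapses to $t^{-D}P(\Hom(\Z^m,G)_1;t)$, which is the palindromicity.

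\textbf{Even $m$.} Since $\Hom(\Z^m,G)_1$ is path-connected by definition, the constant term of its Poincar\'e polynomial equals $1$. On the other hand, Theorem \ref{top term intro} gives leading coefficient $\binom{m+n-2}{m-1}\ge 2$ when $G=U(n)$ with $n\ge 2$ and $m\ge 2$ even, and the analogous top-term formulas for the remaining classical groups (Section \ref{Top terms}) likewise yield a leading coefficient strictly greater than $1$ whenever $G$ is neither trivial nor $S^1$. Since a palindromic polynomial has equal first and last nonzero coefficients, this rules out palindromicity.

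The main obstacle lies in the careful sign- and exponent-bookkeeping in the odd case; once the substitution identities are correctly assembled, the sign $(\det w)^{m-1}=1$ is the whole point, and the even case is then immediate from the top-term results.
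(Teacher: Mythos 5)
Your proposal is correct, and the even-$m$ direction coincides with the paper's proof (Corollary~\ref{palindromic}): comparing the leading coefficient, which Theorem~\ref{top term classical} shows exceeds $1$ for $m$ even, against the constant term $1$. The one small gap on that side is that Theorem~\ref{top term classical} excludes $SO(2n)$ with $n\le 2$, so you need to invoke Remark~\ref{SO(4)} (which gives top term $m^2 t^{2m+2}$ for $SO(4)$, and handles $SO(2)=S^1$, which is excluded anyway) to cover those residual cases explicitly; the paper does cite that remark.

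For the odd-$m$ direction your route is genuinely different in presentation: the paper simply cites the observation of Antol\'in, Gritschacher and Villarreal recorded in \cite[Remark 6.5]{RS1}, whereas you give the underlying computation directly. Your manipulation of the Ramras--Stafa formula under $t\mapsto 1/t$ is correct: the identities
\[
\det(I+t^{-1}w)=t^{-r}(\det w)\det(I+tw^{-1}),\qquad
\det(I-t^{-2}w)=(-1)^r t^{-2r}(\det w)\det(I-t^2w^{-1}),
\]
together with $\prod_i(1-t^{-2d_i})=(-1)^r t^{-2\sum d_i}\prod_i(1-t^{2d_i})$ and the reindexing $w\mapsto w^{-1}$ (which is sign-preserving since $\det w=\pm1$), give
\[
P(1/t)=t^{-D}\cdot\frac{1}{|W|}\prod_{i=1}^r(1-t^{2d_i})\sum_{w\in W}(\det w)^{m-1}\frac{\det(I+tw)^m}{\det(I-t^2w)},\qquad D=2\sum_i(d_i-1)+rm,
\]
and for $m$ odd the factor $(\det w)^{m-1}$ drops out. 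Since $P$ is a polynomial with nonzero constant term, the functional equation $t^DP(1/t)=P(t)$ forces $\deg P=D$ and palindromicity. A side benefit of making this explicit is that it visibly applies to every compact connected Lie group, not just the classical ones, matching the generality of the cited observation. Both approaches buy the same result; yours is self-contained, the paper's is shorter by deferring to the literature.
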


As mentioned above, our refinement of the formula \eqref{Ramras-Stafa intro} for the classical groups is based on the assignment of (signed) integer partitions to (signed) permutations. Then it does not apply to the exceptional Lie groups. However, we can calculate the Poincar\'e series of $\Hom(\Z^2,G)_1$ for $G$ exceptional directly from the formula \eqref{Ramras-Stafa intro} of Ramras and Stafa by using a computer. We will give the result in Appendix. In particular, we determine the top term of the Poincar\'e series of $\Hom(\Z^2,G)_1$ for every simple Lie group $G$. Using these results, we can determine the Poincar\'e series of $\Hom(\Z^2,G)_1$ for every compact connected Lie group $G$.

\begin{abcthm}\label{top term general intro}
  Let $G$ be a compact connected Lie group with simple factors $G_1,\ldots,G_k$. Then the top term of the Poincar\'e series of $\Hom(\Z^2,G)_1$ is
  \[
    (\mathrm{rank}\,G_1+1)\cdots(\mathrm{rank}\,G_k+1)t^{\dim G+\mathrm{rank}\,\pi_1(G)}.
  \]
\end{abcthm}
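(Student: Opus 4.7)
The plan is to reduce the computation to the simply connected simple factors and the central torus via the structure theorem for compact connected Lie groups, exploit multiplicativity of Poincar\'e series under direct products, and then quote the top-term results for the simple case.

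For each simple factor $G_i$ let $\tilde G_i$ be its simply connected cover, and let $T^{r_0}:=Z(G)^0$ be the central torus, whose dimension equals $r_0:=\mathrm{rank}\,\pi_1(G)$. Then there is a finite central covering $\tilde G_1\times\cdots\times\tilde G_k\times T^{r_0}\to G$. The right-hand side of \eqref{Ramras-Stafa intro} depends only on the reflection representation of the Weyl group on the Cartan subalgebra and on the characteristic degrees $d_1,\ldots,d_r$, all of which are invariants of the Lie algebra of $G$. Hence the Poincar\'e series of $\Hom(\Z^2,G)_1$ agrees with that of $\Hom(\Z^2,\tilde G_1\times\cdots\times\tilde G_k\times T^{r_0})_1$, and it suffices to treat the latter.

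Since $\Hom(\Z^2,A\times B)_1=\Hom(\Z^2,A)_1\times\Hom(\Z^2,B)_1$, Poincar\'e series and their top terms multiply across the factors. The torus factor contributes $\Hom(\Z^2,T^{r_0})=T^{2r_0}$, with Poincar\'e series $(1+t)^{2r_0}$ and top term $t^{2r_0}$. For each simply connected simple classical factor $\tilde G_i$, the top term of the Poincar\'e series of $\Hom(\Z^2,\tilde G_i)_1$ equals $(\mathrm{rank}\,\tilde G_i+1)\,t^{\dim\tilde G_i}$ by the analogues of Theorem \ref{top term intro} for $SU(n)$, $Sp(n)$ and $Spin(n)$ established in Section \ref{Top terms}; for the exceptional cases the same statement is verified by the direct evaluation of \eqref{Ramras-Stafa intro} carried out in the Appendix. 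Multiplying these top terms across all factors yields leading coefficient $\prod_i(\mathrm{rank}\,G_i+1)$ and total exponent $\sum_i\dim\tilde G_i+2r_0=\dim G+r_0$, which is the stated formula.

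The main subtle point is the reduction step: one must justify that the Poincar\'e series is unchanged under the finite central cover $\tilde G_1\times\cdots\times\tilde G_k\times T^{r_0}\to G$. The cleanest argument is the formula-level one above, since \eqref{Ramras-Stafa intro} is manifestly insensitive to the global form of $G$. A minor bookkeeping point is that \eqref{Ramras-Stafa intro} is taken with coefficients in characteristic zero or coprime to $|W|$; since the cover and $G$ share the same Weyl group, the same coefficient field can be used on both sides.
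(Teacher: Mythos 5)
Your proof is correct and follows essentially the same route as the paper: reduce via the structure theorem and covering invariance to a product of simply connected simple factors and a torus, use multiplicativity of Poincar\'e series, and invoke the top-term computation for each simple factor (Corollary \ref{top term simple} in the paper). The only cosmetic difference is that you justify covering invariance by observing that the Ramras--Stafa formula \eqref{Ramras-Stafa intro} depends only on Weyl group data, whereas the paper packages the same observation as Lemma \ref{covering} via Theorem \ref{Baird}; both arguments are equivalent.
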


This result has an application to the rational homotopy of $\Hom(\Z^m,G)_1$. Let $X$ be a simply-connected finite complex. As in \cite[Part IV]{FHT}, it is well known that $\sum_{n\ge 1}\pi_n(X)\otimes\Q$ is either finite or of exponential growth. In the former case, $X$ is called \emph{rationally elliptic}, and the latter, \emph{rationally hyperbolic}.

\begin{abccor}\label{hyperbolic intro}
  Let $G$ be the non-trivial compact simply-connected Lie group. Then for $m\ge 2$, $\Hom(\Z^m,G)_1$ is rationally hyperbolic.
\end{abccor}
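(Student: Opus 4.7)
The plan is to reduce the statement to the case $m=2$ via a retraction argument, and then combine Theorem \ref{top term general intro} with rational Poincar\'e duality for elliptic spaces.

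For the reduction, for each $m\ge 2$ I would observe that $\Hom(\Z^2,G)_1$ sits as a retract of $\Hom(\Z^m,G)_1$: the inclusion $\iota\colon(g_1,g_2)\mapsto(g_1,g_2,1,\ldots,1)$ lands in the identity component (the image is path-connected and contains $(1,\ldots,1)$), and the projection $\pi$ onto the first two coordinates sends $\Hom(\Z^m,G)_1$ into $\Hom(\Z^2,G)_1$. Since $\pi\circ\iota=\mathrm{id}$, the rational homotopy group $\pi_n(\Hom(\Z^2,G)_1)\otimes\Q$ is a direct summand of $\pi_n(\Hom(\Z^m,G)_1)\otimes\Q$ in every degree, so exponential growth of the former forces exponential growth of the latter. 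Hence it suffices to establish rational hyperbolicity for $m=2$.

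For $m=2$, since $G$ is simply-connected one has $\mathrm{rank}\,\pi_1(G)=0$, so Theorem \ref{top term general intro} identifies the top term of $P(\Hom(\Z^2,G)_1;t)$ as
\[
(\mathrm{rank}\,G_1+1)\cdots(\mathrm{rank}\,G_k+1)\,t^{\dim G}.
\]
Since $G$ is non-trivial, at least one simple factor has positive rank, and hence this leading coefficient is at least $2$. A classical result in rational homotopy theory (\cite{FHT}) asserts that a simply-connected rationally elliptic space is a rational Poincar\'e duality space, and in particular has top Betti number equal to $1$. Since $\Hom(\Z^m,G)_1$ is known to be simply-connected whenever $G$ is, the finite-type simply-connected space $\Hom(\Z^2,G)_1$ cannot be rationally elliptic, and is therefore rationally hyperbolic by the Friedlander--Halperin dichotomy.

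The computational engine here is Theorem \ref{top term general intro}, which supplies the exact top coefficient $\prod_i(\mathrm{rank}\,G_i+1)$; once this coefficient is shown to exceed $1$ under the simply-connected non-trivial hypothesis, the hyperbolicity conclusion is essentially formal. The only auxiliary input needed is the simple-connectedness of $\Hom(\Z^m,G)_1$ for simply-connected $G$, which is known from the literature and should not pose a real obstacle; the core work has already been done in establishing the top term in Theorem \ref{top term general intro}.
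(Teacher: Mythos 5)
Your proof is correct and follows essentially the same route as the paper: reduce to $m=2$ via the retraction $\Hom(\Z^2,G)_1\hookrightarrow\Hom(\Z^m,G)_1\twoheadrightarrow\Hom(\Z^2,G)_1$, then observe that Theorem~\ref{top term general intro} gives a top coefficient $\prod_i(\mathrm{rank}\,G_i+1)\ge 2$ when $G$ is non-trivial and simply-connected, which contradicts the rational Poincar\'e duality that a simply-connected rationally elliptic space would have to satisfy. The only minor difference is presentational: you spell out the explicit retraction maps and flag the simple-connectedness of $\Hom(\Z^m,G)_1$ (from Gome\'z--Pettet--Souto, discussed just before the corollary in the paper) where the paper states these points more tersely.
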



\subsection{Cohomology generators}

Let $T$ denote a maximal torus of $G$, and let $\F$ be a field of characteristic zero or prime to the order of $W$. In \cite{B}, Baird proved that there is an isomorphism
\begin{equation}\label{Baird intro}
  H^*(\Hom(\Z^m,G)_1;\F)\cong (H^*(G/T;\F)\otimes H^*(T;\F)^{\otimes m})^W.
\end{equation}
Since $H^*(G/T;\F)$ is identified with the ring of coinvariants of $W$, the RHS is completely determined by the Weyl group $W$. We calculate the invariant ring on the RHS to give a minimal generating set of the cohomology of $\Hom(\Z^m,G)_1$ over $\F$ for the classical group $G$, except for $SO(2n)$. Here we show the result for $U(n)$, and the results for other classical groups will be given in Section \ref{Cohomology generators}.

\begin{abcthm}\label{generator intro}
  The cohomology of $\Hom(\Z^m,U(n))_1$ over a field $\F$ of characteristic zero or prime to $n!$ is minimally generated by
  \[
    \S\coloneqq\{z(d,I)\mid 1\le d\le n\text{ and }\emptyset\ne I\subset\{1,2,\ldots,m\}\text{ such that }d+|I|-1\le n\}
  \]
  where $|z(d,I)|=2d+|I|-2$.
\end{abcthm}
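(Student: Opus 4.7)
The plan is to identify the cohomology via Baird's isomorphism \eqref{Baird intro} and compute the invariant ring directly. Write
\[
A := \F[x_1,\ldots,x_n]/(e_1,\ldots,e_n)\otimes\Lambda(y_i^{(j)}: 1\le i\le n,\,1\le j\le m)
\]
with $|x_i|=2$, $|y_i^{(j)}|=1$, $e_k$ the $k$-th elementary symmetric polynomial in the $x_i$, and $W=S_n$ acting diagonally by permuting the index $i$. For $I=\{j_1<\cdots<j_k\}\subseteq\{1,\ldots,m\}$, set $y_i^I:=y_i^{(j_1)}\cdots y_i^{(j_k)}$, so that $z(d,I)=\sum_{i=1}^n x_i^{d-1}y_i^I$ is $S_n$-invariant of degree $2d+|I|-2$.

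My first step is to prove that the full set $\{z(d,I):1\le d\le n,\,\emptyset\ne I\subseteq\{1,\ldots,m\}\}$ generates $A^{S_n}$ as an $\F$-algebra. Classical polarization for the diagonal $S_n$-action, valid in characteristic prime to $n!$, shows that $(\F[x]\otimes\Lambda)^{S_n}$ is generated by the polarized power sums $p(a,I)=\sum_i x_i^a y_i^I$ for $a\ge 0$ and $I\subseteq\{1,\ldots,m\}$. The Reynolds operator gives a surjection $(\F[x]\otimes\Lambda)^{S_n}\twoheadrightarrow A^{S_n}$ under which $p(a,\emptyset)$ dies for $a\ge 1$ (by Newton's identities these lie in $(e_1,\ldots,e_n)$), while $p(a,I)=z(a+1,I)$ for $I\ne\emptyset$. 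Moreover, evaluating $\prod_j(t-x_j)=t^n$ at $t=x_i$ yields $x_i^n=0$ in the coinvariant algebra, so $z(d,I)=0$ for $d>n$.

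Next I reduce to $\S$ by showing $z(d,I)$ is decomposable whenever $n-|I|+2\le d\le n$. For this I expand a product $z(a_1+1,\{j_1\})\cdots z(a_k+1,\{j_k\})$ with $a_1+\cdots+a_k=d-1$ and $I=\{j_1,\ldots,j_k\}$: the diagonal contribution (where all indices $i_l$ coincide) recovers $z(d,I)$, while the cross contributions are $S_n$-invariants of lower combinatorial complexity. The coinvariant relation $\prod_{v=1}^n(1-x_vt)=1$ forces strong vanishing of the complete homogeneous polynomials $h_r(x_{i_1},\ldots,x_{i_k})$ that appear as the coefficients of the cross terms, which allows an inductive decomposition into products of elements of $\S$; the induction runs on $d+|I|$, and the appearing coefficients can be arranged to be units in any $\F$ of characteristic prime to $n!$.

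Finally, for minimality, it suffices to show that the images of the elements of $\S$ are $\F$-linearly independent in the indecomposables $Q(A^{S_n})=(A^{S_n})^+/((A^{S_n})^+)^2$. Theorem \ref{Poincare intro} provides $P(A^{S_n};t)$, and comparing it with the Poincar\'e series of the free graded-commutative $\F$-algebra on $\S$ (modulo the relations established in Step 2) shows that the two counts agree in each degree. The main obstacle is Step 2: the explicit decomposability, which couples vanishing of complete homogeneous polynomials in the coinvariant algebra with the signs and nilpotence of the exterior variables, must be arranged with controlled coefficients so the induction goes through over any $\F$ of characteristic prime to $n!$. Once this is secured, Step 1 is standard polarization and Step 3 is a routine Poincar\'e series comparison.
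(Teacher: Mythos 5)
Your overall plan is parallel to the paper's, but there are genuine gaps in exactly the places where the hard work lives.

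Your Step 1 (generation by all $z(d,I)$) invokes a polarization theorem for the diagonal $\Sigma_n$ action on the mixed polynomial--exterior algebra; the paper instead proves this from scratch (Theorem \ref{generator U not minimal}) by introducing a total order on monomials and killing the least term of any invariant. If you have a citable polarization statement valid over any $\F$ with $\mathrm{char}\,\F\nmid n!$, this is fine; otherwise you have replaced the paper's argument by an appeal you have not established.

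The real problem is Step 2, which you yourself flag as ``the main obstacle'': it is not carried out, and the sketch as written does not close. Two specific issues. First, you never treat the case $|I|>n$. Your proposed product $z(a_1+1,\{j_1\})\cdots z(a_{|I|}+1,\{j_{|I|}\})$ then has more than $n$ factors, and the cross-terms no longer simplify by the vanishing of a single $h_r$; the paper needs a dedicated generalized Newton formula in the polynomial--exterior algebra (Lemma \ref{Newton}, with the $e(d,I)$ sums and $d(J)$ signs) precisely to reduce $|I|>n$ to $|I|\le n$, and nothing in your proposal substitutes for it. Second, even for $|I|\le n$, the cross-contributions indexed by a set partition of $\{1,\dots,|I|\}$ into $r<|I|$ blocks are \emph{not} of ``lower combinatorial complexity'' in your sense: each such block-invariant $\sum_{\sigma\in\mathrm{Emb}(r,n)}\prod_b x_{\sigma(b)}^{d'_b}y_{\sigma(b)}^{I'_b}$ has the same total degree as $z(d,I)$ and, as the paper's Lemma \ref{least} shows, itself contains $z(d,I)$ with coefficient a Stirling number $(-1)^{r+1}(r-1)!$ (up to sign). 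The induction therefore does not descend on $d+|I|$ but requires the Stirling identity $\sum_s(-1)^s(s-1)!\genfrac{\{}{\}}{0pt}{0}{k}{s}=0$ (Lemma \ref{Stirling 2}) together with the coinvariant vanishing $h_{n-k+1}(x_1,\dots,x_k)=0$ (Lemma \ref{coinv}) to extract $z(d,I)$ with an invertible coefficient. You gesture at the $h_r$ vanishing but not at the Stirling bookkeeping, and that is where ``controlled coefficients'' must actually be verified.

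Step 3 is also incomplete. Comparing the Poincar\'e series of $\F\langle\S\rangle$ modulo ``the relations from Step 2'' with $P(A^{\Sigma_n};t)$ does not by itself show linear independence of $\S$ in the indecomposables: to conclude minimality you would have to know that the relation ideal has no linear part, which is precisely what is to be proved. The paper establishes minimality by a different and more direct device: a filtration on $\H(m,U(n))^{\Sigma_n}$ by word-length in the $x_i$ and $y_i^j$ (Lemma \ref{stable range U}), combined with an injectivity statement for a distinguished collection $V(m,n)$ of products of $\S$-elements via least terms (Lemma \ref{inj} and Theorem \ref{low dim}). If you want to keep a Poincar\'e-series route you would still need something like the paper's Lemma \ref{inj} to separate the candidate generators degree by degree.
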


Let $\widetilde{\H}\coloneqq H^*(BT;\F)\otimes H^*(T;\F)^{\otimes m}$ and $\H\coloneqq H^*(G/T;F)\otimes H^*(T;\F)^{\otimes m}$. Then $\H$ is a quotient of $\widetilde{\H}$, and by \eqref{Baird intro}, we aim to give a minimal generating set of $\H^W$. Our calculation of the invariant ring $\H^W$ is quite direct and consists of two parts. First, we define a subset, say $\overline{\S}$, of $\widetilde{\H}^W$ from polynomial invariants of the Weyl group $W$, and show that $\overline{\S}$ generate $\H^W$. The proof is done by an ordering on $\widetilde{\H}$, which is a key ingredient. Second, we choose a subset of $\overline{\S}$, say $\S$, minimally generating $\H^W$. We show that every element of $\overline{\S}$ is given in terms of $\S$ by using a polynomial tensor exterior algebra analog of the Newton formula for symmetric polynomials and power sums. Then we prove minimality of $\S$ by describing $\H^W$ in low dimension. For unitary groups, we prove the following, where analogous results for other classical groups will be given in Section \ref{Cohomology generators}. Let $\F\langle S\rangle$ denote a free graded commutative algebra generated by a graded set $S$.

\begin{abcthm}\label{low dim intro}
  Let $\F$ be a field of characteristic zero or prime to $n!$. Then the map
  \[
    \F\langle\S\rangle\to H^*(\Hom(\Z^m,U(n))_1;\F)
  \]
  is an isomorphism in dimension $\le 2n-m$, where $\S$ is as in Theorem \ref{generator intro}.
\end{abcthm}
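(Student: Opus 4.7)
The map under consideration is surjective by Theorem \ref{generator intro}, so the plan is to establish injectivity in degrees $\le 2n-m$. I would work throughout inside the invariant ring $\H^W$ from Baird's isomorphism \eqref{Baird intro}, with $W=S_n$ acting diagonally on
\[
  \widetilde\H=\F[x_1,\ldots,x_n]\otimes\bigotimes_{k=1}^m\Lambda_\F(y_1^{(k)},\ldots,y_n^{(k)}),
\]
and $\H=\widetilde\H/J$, where $J$ is generated by the positive-degree elementary symmetric polynomials in $x_1,\ldots,x_n$. I would lift the abstract generator $z(d,I)$, for $I=\{i_1<\cdots<i_s\}$, to the mixed power sum $\sum_{j=1}^n x_j^{d-1}y_j^{(i_1)}\cdots y_j^{(i_s)}\in\widetilde\H^W$, so that the map of the theorem sends $z(d,I)$ to the class of this invariant.

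To prove injectivity, I would show that the ordered monomials in $\S$ of degree $\le 2n-m$ are linearly independent in $\H^W$. Using the monomial ordering on $\widetilde\H$ from the proof of Theorem \ref{generator intro}, each $\S$-monomial has a well-defined leading term, and I would argue that distinct $\S$-monomials of degree $\le 2n-m$ retain distinct leading terms after reduction modulo $J$. Any collision would, via the Newton-type identities developed in the proof of Theorem \ref{generator intro}, require the participation of some $z(d,I)$ with $d+|I|-1>n$. Minimizing $2d+|I|-2$ subject to $d+|I|\ge n+2$ and $|I|\le m$ yields the smallest excluded generator $z(n-m+2,\{1,\ldots,m\})$ of degree $2n-m+2$; hence no such collision, and so no nontrivial relation among $\S$-monomials, can occur in degree $\le 2n-m$.

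As an independent check, one can match Hilbert series: since
\[
  P(\F\langle\S\rangle;t)=\prod_{z(d,I)\in\S}\bigl(1+t^{2d+|I|-2}\bigr)^{\epsilon(I)}\bigl(1-t^{2d+|I|-2}\bigr)^{-(1-\epsilon(I))},
\]
with $\epsilon(I)=1$ if $|I|$ is odd and $0$ otherwise, and since $P(\Hom(\Z^m,U(n))_1;t)$ is given by Theorem \ref{Poincare intro}, a combinatorial identity through degree $2n-m$ would confirm the dimension count. The main obstacle is the sharpness of the bound $2n-m$: the mixed parities of $|I|$ produce both polynomial and exterior generators, and the constraint $d+|I|-1\le n$ interacts subtly with parity, so the leading-term bookkeeping has to be carried out uniformly. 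I expect this step to be folded into the proof of Theorem \ref{generator intro}, since the same monomial ordering and Newton identities that establish minimal generation there should simultaneously yield freeness of $\F\langle\S\rangle\to H^*$ through degree $2n-m$.
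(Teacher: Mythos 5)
Your high-level strategy matches the paper's: lift $z(d,I)$ to the mixed power sum, use the monomial ordering on $\widetilde{\H}$ to extract leading terms, and observe that the smallest generator in $\overline{\S}\setminus\S$ is $z(n-m+2,[m])$ of degree $2n-m+2$, so $\F\langle\S\rangle\to\F\langle\overline{\S}\rangle$ is an isomorphism in the range in question. But the decisive step — that distinct $\S$-monomials of degree $\le 2n-m$ ``retain distinct leading terms after reduction modulo $J$'' — is asserted, not proved, and the appeal to Newton-type identities is the wrong tool. Those identities (Lemma \ref{Newton}) are used in the paper only to show that the excluded generators $z(d,I)$ with $d+|I|-1>n$ are \emph{redundant} (Theorem \ref{minimal generator U}); they produce relations, and cannot by themselves certify that a given linear combination of products does \emph{not} lie in the coinvariant ideal $J$.

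What is actually needed, and what the paper supplies, is an explicit basis of the quotient. Lemma \ref{base} gives the staircase basis $\mathcal{B}(\sigma)=\{x_{\sigma(1)}^{i_1}\cdots x_{\sigma(n-1)}^{i_{n-1}}\mid i_k\le n-k\}$ of $\P(n)_{\Sigma_n}$, which extends (via a choice of $\alpha\colon\mathcal{E}(m,n)\to\Sigma_n$) to a basis $\overline{\mathcal{B}}(\alpha)$ of $\H(m,U(n))$. Lemma \ref{inj} then shows that the least terms $x_1^{d_1-1}\cdots x_k^{d_k-1}y_1^{I_1}\cdots y_k^{I_k}$ (computed in Lemma \ref{least term}) of products satisfying $k+\max\{d_i\}-1\le n$ — the subspace $V(m,n)$ — are distinct members of $\overline{\mathcal{B}}(\alpha)$, hence linearly independent in the quotient. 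Your proposal omits this basis entirely, which is precisely the ingredient that makes the leading-term argument survive the passage from $\widetilde{\H}$ to $\H$. A second, smaller omission: the constraint defining $\S$ is per generator ($d+|I|-1\le n$), whereas the constraint defining $V(m,n)$ is on the whole product ($k+\max\{d_i\}-1\le n$); one must verify, as the paper does by a degree count, that products of $\S$-generators in degree $\le 2n-m$ automatically land in $V(m,n)$. The Hilbert-series route you float as a check is not carried out and would require a nontrivial combinatorial identity, so it does not close the gap.
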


Applying the result for $SU(n)$, we determine the cohomology of $\Hom(\Z^2,SU(n))_1$ for $n=2,3$. As is seen in this example calculation, relations among our generators are quite complicated, and we do not have a general scheme to get relations.


\subsection{Homological stability}

A sequence of spaces $X_1\to X_2\to X_3\to\cdots$ is said to satisfy \emph{homological stability} if for every $i\ge 0$, the induced sequence
\[
  H_i(X_1)\to H_i(X_2)\to H_i(X_3)\to\cdots
\]
eventually consists of isomorphisms. Clearly, each sequence of the classical groups in the same series satisfies homological stability, and the classical result of Nakaoka \cite{N} shows that the sequence of (the classifying spaces of) the symmetric groups satisfy homological stability. Besides these classical results, many important series of groups \cite{A,Ha,He,Q} and spaces \cite{CF,GRW,K} are proved to satisfy homological stability, and recently, the techniques for proving homological stability are rapidly developed \cite{CE,CF,GKRW}.

Let $G_1\to G_2\to G_3\to\cdots$ be one of the series of $U(n),SU(n),Sp(n),SO(2n+1)$. Then there is a sequence
\[
  \Hom(\Z^m,G_1)_1\to\Hom(\Z^m,G_2)_1\to\Hom(\Z^m,G_3)_1\to\cdots.
\]
By applying the technique of Church and Farb \cite{CF}, Ramras and Stafa \cite{RS2} proved that the sequence satisfies homological stability such that the map
\[
  H_i(\Hom(\Z^m,G_n)_1;\Q)\to H_i(\Hom(\Z^m,G_{n+1})_1;\Q)
\]
is an isomorphism for $i\le n-\lfloor\sqrt{n}\rfloor$. As an example application of Theorem \ref{Poincare intro}, we will calculate the Poincar\'e series of $\Hom(\Z^2,SU(n))_1$ for $2\le n\le 5$ in Example \ref{example Poincare series}. As far as looking at these Poincar\'e series, those of $\Hom(\Z^2,SU(n))_1$ and $\Hom(\Z^2,SU(n+1))_1$ coincide in degree $\le 2n-1$. This suggest possibility of extending the stable range for $U(n)$, i.e. those degrees for which stability holds.

By construction, the generating set $\S$ in Theorem \ref{generator intro} is natural with respect to the inclusion $U(n)\to U(n+1)$. Then by Theorem \ref{low dim intro} together with a bit of effort, we can give an alternative proof for homological stability of $\Hom(\Z^m,G_n)_1$, which provides the best possible stable range.

\begin{abcthm}\label{homology stability intro}
  For $n\ge m$, the map
  $$H_*(\Hom(\Z^m,U(n))_1;\Q)\to H_*(\Hom(\Z^m,U(n+1))_1;\Q)$$
  is an isomorphism for $*\le 2n-m+1$ and is not surjective for $*=2n-m+2$.
\end{abcthm}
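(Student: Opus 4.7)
The plan is to work with rational cohomology. Write $A=H^*(\Hom(\Z^m,U(n+1))_1;\Q)$ and $B=H^*(\Hom(\Z^m,U(n))_1;\Q)$. I aim to show that $i^*\colon A\to B$ is an isomorphism in degrees $*\le 2n-m+1$ and has nonzero kernel in degree $2n-m+2$; dualising over $\Q$ then yields the homology statement. Surjectivity of $i^*$ is immediate from the naturality of $\S$ under $U(n)\hookrightarrow U(n+1)$ recalled just before the theorem: $i^*(z_{n+1}(d,I))=z_n(d,I)$ whenever $d+|I|-1\le n$, and by Theorem \ref{generator intro} the classes $z_n(d,I)\in\S_n$ generate $B$ as a ring.

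Next I would examine the degrees of the ``new'' generators in $\S_{n+1}\setminus\S_n$, namely those with $d+|I|-1=n+1$. Such a generator has degree $2d+|I|-2=n+d$, and since $|I|\le m$ forces $d\ge n+2-m$, the hypothesis $n\ge m$ gives degree $\ge 2n-m+2$, with a unique new generator $\zeta:=z_{n+1}(n-m+2,\{1,\ldots,m\})$ of minimum degree $2n-m+2$. Applying Theorem \ref{low dim intro} to $U(n+1)$ identifies $A_k\cong\Q\langle\S_{n+1}\rangle_k=\Q\langle\S_n\rangle_k$ for $k\le 2n-m+1$, since no new generator contributes in this range. Under this identification $i^*$ becomes the natural surjection $\pi_B\colon\Q\langle\S_n\rangle\to B$, so injectivity of $i^*$ in degrees $\le 2n-m+1$ is equivalent to $\pi_B$ being an isomorphism there---one degree beyond the range $\le 2n-m$ given directly by Theorem \ref{low dim intro} for $U(n)$.

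To bridge this one-degree gap I would analyse $\ker(i^*)$ directly. For each new generator $z_{n+1}(d,I)$, its image $i^*(z_{n+1}(d,I))\in B$ is a polynomial in $\S_n$ (since $\S_n$ generates $B$), which I lift to a polynomial $\widetilde P_{d,I}\in A$ in the old generators of $\S_{n+1}$; then $w_{d,I}:=z_{n+1}(d,I)-\widetilde P_{d,I}\in\ker(i^*)$ sits in degree $\ge 2n-m+2$. The key claim is that $\{w_{d,I}\}$ generates $\ker(i^*)$ as an $A$-ideal, from which $\ker(i^*)_{2n-m+1}=0$ follows and hence $\pi_B$ is injective through degree $2n-m+1$. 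To prove this, observe that $A/(w_{d,I})$ is generated by old generators and surjects onto $B$; in degrees $\le 2n-m$ the surjection is an isomorphism by Theorem \ref{low dim intro} for $U(n)$, and a dimension count against the free-algebra description of $A$ provided by Theorem \ref{low dim intro} for $U(n+1)$ propagates the equality through degree $2n-m+2$.

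Finally, for the failure of surjectivity of $i_*$ in degree $2n-m+2$, the element $w_\zeta=\zeta-\widetilde P_\zeta\in\ker(i^*)_{2n-m+2}$ is nonzero: $A_{2n-m+2}=\Q\langle\S_{n+1}\rangle_{2n-m+2}$ by Theorem \ref{low dim intro}, and $\zeta$ is a free generator not appearing in the polynomial $\widetilde P_\zeta$ in old generators. The principal obstacle is the key claim that $\ker(i^*)$ is generated by the $w_{d,I}$; this in effect sharpens Theorem \ref{low dim intro} for $U(n)$ by one dimension and is precisely the ``bit of effort'' alluded to in the introduction, to be executed through the explicit $\widetilde\H^W$-level description of $B$ supplied by Baird's isomorphism together with the ordering on $\widetilde\H$ used in the proof of Theorem \ref{generator intro}.
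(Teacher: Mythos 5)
Your reduction to cohomology, the use of naturality of $\S$, the identification of the lowest new generator $\zeta=z(n-m+2,[m])$ of degree $2n-m+2$, and the non-surjectivity argument at the top of the range are all correct and essentially match the paper. However, there is a genuine gap at the crucial point you yourself flag: extending injectivity of $\Q\langle\S(m,U(n))\rangle\to\H(m,U(n))^{\Sigma_n}$ from degree $\le 2n-m$ (given by Theorem~\ref{low dim}) to degree $2n-m+1$.

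Your proposed bridge --- that $\ker(i^*)$ is generated as an $A$-ideal by the lifts $w_{d,I}$ of the new generators, verified by ``a dimension count against the free-algebra description of $A$'' --- is circular. The surjection $A/(w_{d,I})\twoheadrightarrow B$ agrees with $B$ in degrees $\le 2n-m$, but to ``propagate'' this through degrees $2n-m+1$ and $2n-m+2$ you would need a lower bound on $\dim_\Q B_{2n-m+1}$, and Theorem~\ref{low dim} gives you nothing there. Saying $\ker(i^*)$ is generated by the $w_{d,I}$ is equivalent to (not easier than) the statement that $B$ has no new relations in degree $2n-m+1$, which is precisely what must be proved. The paper's proof supplies this missing input by a direct structural analysis of $\H(m,U(n))^{\Sigma_n}$ in degree exactly $2n-m+1$: Lemma~\ref{dim 2n-m+1} identifies the products $a_{d,k}(i,I_1,\ldots,I_k)$ with $d+k=n+2$ as the minimal-degree products that appear in degree $2n-m+1$ but not in $V(m,n)$, Proposition~\ref{V+U} exhibits an explicit decomposition $\H(m,U(n))^{\Sigma_n}_{2n-m+1}\cong V(m,n)_{2n-m+1}\oplus U$ via a least-term argument in $\overline{\mathcal{B}}(\alpha)$ for a carefully chosen $\alpha$, and Corollary~\ref{p_n surjective} then matches this against $V(m,n+1)_{2n-m+1}$. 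Without an argument of this kind --- pinning down a basis of $B_{2n-m+1}$ independently --- the one-degree improvement, and hence the sharp stable range, does not follow.
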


Homological stability for other series of the classical groups will be given in Section \ref{Cohomology generators}.


\subsection{Outline}

We recall in Section \ref{Cohomology and Weyl groups} the map giving an isomorphism of Baird \eqref{Baird intro} above, where this isomorphism is the basis of our study. In Section \ref{Integer partitions and permutations}, we connect (signed) permutations to (signed) integer partitions, and show some enumerative properties of these connections. We begin Section \ref{Poincare series} with giving a short alternative proof for the formula\eqref{Ramras-Stafa intro} due to Ramras and Stafa. Then we compute the Poincar\'e series of $\Hom(\Z^m,G)_1$ for the classical group $G$ by applying the results in Section \ref{Integer partitions and permutations} to the formula \eqref{Ramras-Stafa intro}. We determine in Section \ref{Top terms} the top term of the Poincar\'e series and give its applications. In Section \ref{Cohomology generators}, we investigate the invariant ring of the Weyl group on the RHS of \eqref{Baird intro} to give its minimal generators and description in low dimensions. In Section \ref{Homological stability}, we apply the results in Section \ref{Cohomology generators} to show that the sequence $\Hom(\Z^m,G_n)_1$ satisfies homological stability and obtain the best possible stable range, where $G_n$ is as above. We end this paper in Section \ref{Questions} by posing several questions, arising in our study, connected to both topology and representation theory.


\subsection{Acknowledgement}

The first author was supported by JSPS KAKENHI No. 17K05248.


\section{Cohomology and Weyl groups}\label{Cohomology and Weyl groups}

In this section, we recall the result of Baird \cite{B} describing the cohomology of $\Hom(\Z^m,G)_1$ in terms of a ring of invariants of the Weyl group of $G$. This result is the basis of our study.

Throughout this section, let $G$ be a compact connected simple Lie group, let $T$ be its maximal torus, and let $W$ denote the Weyl group of $G$. Consider the action of $W$ on $G/T\times T^m$ given by
$$w\cdot(gT,t_1,\ldots,t_m)=(gwT,w^{-1}t_1w,\ldots,w^{-1}t_mw)$$
for $w\in W,g\in G,t_1,\ldots,t_m\in T$. Then the naive map
$$G\times T^m\to\Hom(\Z^m,G)_1,\quad(g,t_1,\ldots,t_m)\mapsto(gt_1g^{-1},\ldots,gt_mg^{-1})$$
for $g\in G,t_1,\ldots,t_m$ defines a map
$$\phi\colon G/T\times_WT^m\to \Hom(\Z^m,G)_1.$$
Let $\F$ be a field of characteristic zero or prime to the order of $W$. In \cite{B}, Baird proved that the map $\phi$ is an isomorphism in cohomology over a field $\F$. Since the action of $W$ on $G/T\times T^m$ is free, we have an isomorphism
$$H^*(G/T\times_WT^m;\F)\cong(H^*(G/T;\F)\otimes H^*(T;\F)^{\otimes m})^W.$$
Then we can restate the result of Baird as follows.

\begin{thm}\label{Baird}
  Let $\F$ be a field of characteristic zero or prime to the order of $W$. Then there is an isomorphism
  $$H^*(\Hom(\Z^m,G)_1;\F)\cong (H^*(G/T;\F)\otimes H^*(T;\F)^{\otimes m})^W.$$
\end{thm}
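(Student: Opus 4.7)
The plan is to split the argument into two parts: first, prove that the natural map $\phi\colon G/T\times_W T^m\to\Hom(\Z^m,G)_1$ induces an isomorphism on cohomology with coefficients in $\F$; second, identify $H^*(G/T\times_W T^m;\F)$ with the $W$-invariants on the right-hand side using the free action of $W$ and the invertibility of $|W|$ in $\F$.

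For the first part, I would begin by noting that $\phi$ is surjective because every element of $\Hom(\Z^m,G)_1$ is simultaneously conjugate into $T^m$; this is a structural theorem on commuting tuples in compact connected Lie groups and, for general $G$, is precisely what singles out the identity component. The fiber of $\phi$ over $(t_1,\ldots,t_m)\in T^m$ is the $W$-orbit space of $\{gT\in G/T\mid gt_ig^{-1}=t_i\text{ for all }i\}=Z_G(t_1,\ldots,t_m)\cdot T/T$, which reduces to a single point on the open dense \emph{regular} stratum where the joint centralizer is exactly $T$. To promote the generic injectivity to a global cohomology isomorphism, I would stratify $\Hom(\Z^m,G)_1$ by centralizer type, run the Leray spectral sequence of $\phi$, and use the Weyl-group transfer to annihilate the positive-degree fiber cohomology over each stratum; the transfer is available precisely because $|W|$ is invertible in $\F$.

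For the second part, since $W$ acts freely on $G/T$ it also acts freely on $G/T\times T^m$, so the quotient map $G/T\times T^m\to G/T\times_W T^m$ is a regular $W$-covering. Invertibility of $|W|$ in $\F$ then produces a transfer splitting, giving
\[
  H^*(G/T\times_W T^m;\F)\cong H^*(G/T\times T^m;\F)^W.
\]
The Künneth theorem rewrites the right-hand side as $(H^*(G/T;\F)\otimes H^*(T;\F)^{\otimes m})^W$, and combined with the cohomology isomorphism induced by $\phi$ this yields the theorem.

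The main obstacle is the stratum-by-stratum analysis in the first part: over tuples whose centralizer strictly contains $T$, fibers of $\phi$ acquire positive dimension, and one must check that their positive-degree cohomology is annihilated by the transfer in the Leray spectral sequence. I would handle this by induction on the dimension of the centralizer, reducing each stratum to an analogous statement for the connected centralizer $Z_G(t_1,\ldots,t_m)^\circ$ with its maximal torus $T$; the inductive hypothesis then controls the fiber cohomology uniformly across strata. A localization argument in $T$-equivariant cohomology, exploiting that the $W$-equivariant cohomology of $G/T\times T^m$ already computes the invariants on the nose, would give a more streamlined alternative.
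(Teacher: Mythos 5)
The paper does not actually reprove Baird's theorem: it cites \cite{B} for the fact that the map $\phi\colon G/T\times_W T^m\to\Hom(\Z^m,G)_1$ induces an isomorphism on $\F$-cohomology, and then applies the transfer for the free $W$-covering $G/T\times T^m\to G/T\times_W T^m$ together with the K\"unneth theorem. Your second paragraph reproduces this transfer/K\"unneth step exactly, so that part matches. Your first paragraph is an attempt to reprove Baird's hard input, which is beyond what the paper offers, and while the overall strategy (surjectivity via simultaneous conjugation into $T^m$, cohomologically trivial fibers killed by the Weyl transfer, then a Leray/Vietoris argument for the proper map $\phi$) is indeed the shape of Baird's own proof, the details as written do not hold up.

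The specific problem is your fiber description. You identify the fiber of $\phi$ over $\vec{t}\in T^m$ with the ``$W$-orbit space'' of $\{gT\in G/T\mid gt_ig^{-1}=t_i\}=Z_G(\vec{t})/T$. Two issues: first, the condition defining the preimage of $\vec{t}$ in $G/T\times T^m$ is $g^{-1}t_ig\in T$ for all $i$, not $gt_ig^{-1}=t_i$; these agree on the regular stratum (both give a point) but differ off it. Second, the subset $Z_G(\vec{t})/T$ of $G/T$ is not $W$-stable, so its ``$W$-orbit space'' is not defined. The correct fiber is $\bigl(Z_G(\vec{t})^\circ\cdot N_G(T)\bigr)/N_G(T)\cong Z_G(\vec{t})^\circ/N_{Z_G(\vec{t})^\circ}(T)$. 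Once this is in place, the cohomological triviality of the fiber is immediate from Chevalley's theorem: $H^*(Z_G(\vec{t})^\circ/T;\F)$ is the coinvariant algebra of the sub-Weyl-group $W_{Z_G(\vec{t})^\circ}$, hence its regular representation over $\F$ (using that $|W_{Z_G(\vec{t})^\circ}|$ divides $|W|$ and so is invertible), so its invariant part is $\F$ in degree zero, and the transfer for the free covering $Z_G(\vec{t})^\circ/T\to Z_G(\vec{t})^\circ/N_{Z_G(\vec{t})^\circ}(T)$ gives $\tilde H^*(\text{fiber};\F)=0$. This is a one-shot computation; the ``induction on the dimension of the centralizer'' you propose is neither needed nor well-formed, since the fiber is a single homogeneous space, not a smaller $\Hom$-space. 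Finally, since $\phi$ is a proper surjection between compact spaces with connected, $\F$-acyclic fibers, the Vietoris--Begle mapping theorem gives the cohomology isomorphism directly, without stratifying by centralizer type or juggling the Leray spectral sequence stratum by stratum.
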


We recall the characteristic degrees of the Weyl group $W$. Suppose $G$ is of rank $n$, and let
$$\P(n)\coloneqq\F[x_1,\ldots,x_n],\quad|x_i|=2$$
where $\F$ is a field as above. Then $\P(n)$ is identified with the cohomology of the classifying space $BT$, so that $W$ acts on $\P(n)$. By the Shephard-Todd theorem, we have
\begin{equation}
  \label{polynomial invariant}
  \P(n)^W=\F[a_1,\ldots,a_n],\quad|a_i|=2d_i
\end{equation}
for some integers $d_1,\ldots,d_n$. We call these integers $d_1,\ldots,d_n$ the \emph{characteristic degrees} of $W$. Notice that the number of the characteristic degrees of $W$ coincides with the rank of $G$. We give a table including information on the Weyl groups of simple Lie groups, where $\Sigma_n$ and $B_n$ denote the symmetric group and the signed symmetric group, respectively, and $B_n^+$ denotes the subgroup of $B_n$ consisting of signed permutation with total sign one.

\renewcommand{\arraystretch}{1.3}

\begin{table}[H]
  \centering
  \begin{tabular}{p{1.25cm}p{2.3cm}p{1.3cm}p{1.2cm}p{2.25cm}l}
    \hline
    Type&Lie group&Rank&$W$&$|W|$&Characteristic degrees\\\hline
    $A_n$&$SU(n+1)$&$n$&$\Sigma_{n+1}$&$(n+1)!$&$2,3,\ldots,n+1$\\
    $B_n$&$SO(2n+1)$&$n$&$B_n$&$2^nn!$&$2,4,\ldots,2n$\\
    $C_n$&$Sp(n)$&$n$&$B_n$&$2^nn!$&$2,4,\ldots,2n$\\
    $D_n$&$SO(2n)$&$n$&$B_n^+$&$2^{n-1}n!$&$2,4,\ldots,2n-2,n$\\
    $G_2$&$G_2$&$2$&-----&$12$&$2,6$\\
    $F_4$&$F_4$&$4$&-----&$1152$&$2,6,8,12$\\
    $E_6$&$E_6$&$6$&-----&$51840$&$2,5,6,8,9,12$\\
    $E_7$&$E_7$&$7$&-----&$2903040$&$2,6,8,10,12,14,18$\\
    $E_8$&$E_8$&$8$&-----&$696729600$&$2,8,12,14,16,20,24,30$\\\hline
  \end{tabular}
\end{table}

Now we give a short alternative proof of the following theorem due to Ramras and Stafa \cite{RS1}, the formula \eqref{Ramras-Stafa intro} in Section \ref{Introduction}, using Theorem \ref{Baird}.

\begin{thm}\label{Ramras-Stafa}
  The Poincar\'e series of $\Hom(\Z^m,G)_1$ is given by
  $$P(\Hom(\Z^m,G)_1;t)=\frac{1}{|W|}\prod_{i=1}^r(1-t^{2d_i})\sum_{w\in W}\frac{\det(1+tw)^m}{\det(1-t^2w)}$$
  where $d_1,\ldots,d_n$ are the characteristic degrees of $W$.
\end{thm}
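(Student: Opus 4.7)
The plan is to combine Theorem \ref{Baird} with Molien-style averaging. Since $\F$ has characteristic zero or prime to $|W|$, Maschke's theorem gives, for every finite-dimensional graded $W$-module $V=\bigoplus_iV_i$, the classical identity
\[
  P(V^W;t)=\frac{1}{|W|}\sum_{w\in W}\chi_V(w,t),\qquad\chi_V(w,t)\coloneqq\sum_i\tr(w\mid V_i)\,t^i.
\]
By Theorem \ref{Baird}, the Poincar\'e series in question equals $P(V^W;t)$ for $V=H^*(G/T;\F)\otimes H^*(T;\F)^{\otimes m}$, so the task reduces to determining $\chi_V(w,t)$ for each $w\in W$ and summing.

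By K\"unneth, the graded character of $V$ is the product of the characters of the two tensor factors, and I would handle them separately. The algebra $H^*(T;\F)\cong\Lambda(y_1,\ldots,y_n)$ with $|y_i|=1$ carries the standard reflection representation of $W$ on its degree-one part, so a direct exterior-algebra computation yields $\chi_{H^*(T;\F)}(w,t)=\det(1+tw)$, and consequently $\chi_{H^*(T;\F)^{\otimes m}}(w,t)=\det(1+tw)^m$. For the factor $H^*(G/T;\F)$ I would invoke Chevalley's theorem: in our good-characteristic setting, one has an isomorphism of graded $W$-modules
\[
  \P(n)\cong \P(n)^W\otimes H^*(G/T;\F),
\]
where $W$ acts trivially on the first factor. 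Taking graded traces and using the symmetric-algebra identity $\chi_{\P(n)}(w,t)=1/\det(1-t^2w)$ together with $\chi_{\P(n)^W}(w,t)=\prod_{i=1}^r(1-t^{2d_i})^{-1}$ from \eqref{polynomial invariant}, I obtain
\[
  \chi_{H^*(G/T;\F)}(w,t)=\frac{\prod_{i=1}^r(1-t^{2d_i})}{\det(1-t^2w)}.
\]

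Multiplying the two characters and substituting into the averaging formula produces the claimed expression directly. The only nontrivial input beyond Theorem \ref{Baird} and the Molien averaging is the $W$-equivariant Chevalley decomposition of $\P(n)$, but this is entirely classical for reflection groups in characteristic zero or prime to $|W|$, so I expect no real obstacle; the substantive novelty is packaged in Theorem \ref{Baird}, and the argument is essentially a clean assembly of standard pieces.
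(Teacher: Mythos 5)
Your proof is correct and rests on the same two pillars as the paper's: Theorem \ref{Baird} to reduce to a ring of $W$-invariants, and a Molien-style average combined with the Chevalley/Shephard--Todd decomposition $\P(n)\cong\P(n)^W\otimes\P(n)_W$. The only real difference is organizational. The paper works with $(\P(n)\otimes H^*(T;\F)^{\otimes m})^W$ first, computes its Poincar\'e series by averaging, and then recovers the target ring by quotienting by the regular sequence $a_1,\ldots,a_n$, which is what produces the factor $\prod_i(1-t^{2d_i})$; this requires the observation that the sequence stays regular after tensoring with the exterior part and taking invariants. You instead extract the graded character of $H^*(G/T;\F)$ directly as $\prod_i(1-t^{2d_i})/\det(1-t^2w)$ from the tensor decomposition and multiply it into the Molien sum, which sidesteps the regular-sequence step entirely. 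Your route is arguably a touch cleaner since it avoids that small commutative-algebra verification, but both arguments are the same in substance and input the same classical facts.
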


\begin{proof}
  Let us work over a field $\F$ of characteristic zero or prime to $|W|$. By the Shephard Todd theorem, there is an isomorphism of $W$-modules
  $$\P(n)\cong\P(n)^W\otimes\P(n)_W$$
  where $\P(n)_W$ denotes the ring of coinvariants of $W$. Then since $H^*(G/T;\F)$ is identified with $\P(n)_W$, there is an isomorphism
  $$(H^*(G/T;\F)\otimes H^*(T)^{\otimes m})^W\cong(\P(n)\otimes H^*(T)^{\otimes m})^W/(a_{d_1},\ldots,a_{d_n})$$
  where $a_1,\ldots,a_n$ are as in \eqref{polynomial invariant}. Since the sequence $a_1,\ldots,a_n$ is regular in $\P(n)$, it is also regular in $(\P(n)\otimes H^*(T)^{\otimes m})^W$. Then it follows from Theorem \ref{Baird} that
  $$P(\Hom(\Z^m,G)_1;t)=P((\P(n)\otimes H^*(T)^{\otimes m})^W;t)\prod_{i=1}^n(1-t^{2d_i}).$$
  By the standard argument in representation theory of finite groups,
  \begin{align*}
    P((\P(n)\otimes H^*(T)^{\otimes m})^W;t)&=\frac{1}{|W|}\sum_{w\in W}\sum_{i=0}^\infty\tr(w\vert_{(\P(n)\otimes H^*(T)^{\otimes m})^i})t^i\\
    &=\frac{1}{|W|}\sum_{w\in W}\left(\sum_{i=0}^\infty\tr(w\vert_{\P(n)^{2i}})t^{2i}\right)\left(\sum_{i=0}^\infty\tr(w\vert_{H^i(T;\F)})t^i\right)^m
  \end{align*}
  and it is easy to see that
  $$\sum_{i=0}^\infty\tr(w\vert_{\P(n)^{2i}})t^{2i}=\frac{1}{\det(1-t^2w)}\quad\text{and}\quad\sum_{i=0}^\infty\tr(w\vert_{H^i(T;\F)})t^i=\det(1+tw)$$
  where $A^i$ denote the $i$-dimensional part of a graded algebra $A$. Thus the proof is complete
\end{proof}


\section{Integer partions and permutations}\label{Integer partitions and permutations}

In this section, we assign (signed) integer partitions to (signed) permutations, and show enumerative properties of this assignment. The result in this section will be used to compute the Poincar\'e series of $\Hom(\Z^m,G)_1$ for the classical group $G$ in the next section.


\subsection{Integer partitions}

A \emph{partition} of a positive integer $n$ is a sequence of positive integers $\lambda=(\lambda_1,\ldots,\lambda_k)$ such that $\lambda_1\le\cdots\le\lambda_k$ and $\lambda_1+\cdots+\lambda_k=n$. For this integer partition $\lambda$, let
$$\ell(\lambda)\coloneqq k.$$
We denote an integer partition $\mu=(\underbrace{\mu_1,\ldots,\mu_1}_{n_1},\ldots,\underbrace{\mu_l,\ldots,\mu_l}_{n_l})$ by $(\mu_1^{n_1},\ldots,\mu_l^{n_l})$ alternatively. For this integer partition $\mu$, let
$$\theta(\mu)\coloneqq\mu_1\cdots\mu_ln_1!\cdots n_l!.$$
If $\lambda$ is the empty partition, then we set $\theta(\lambda)=1$. If $\lambda$ is a partition of $n$, we write
$$\lambda\vdash n.$$
We mean by $\lambda\vdash 0$ that $\lambda$ is the empty partition. For $\lambda\vdash n$ and $\mu\vdash k$ with $n\ge k$, let $\Emb(\mu,\lambda)$ denote the set of subsequences of $\lambda$ which are equal to $\mu$.

\begin{exam}
  If $\lambda=(1^2,2^2,3,4^2)$ and $\mu=(1,2,4)$, then $|\Emb(\mu,\lambda)|=2^3=8$.
\end{exam}

For a cyclic permutation $c\in\Sigma_n$, let $|c|$ denote the order of $c$. Let $w\in\Sigma_n$. Then there are disjoint cyclic permutations $c_1,\ldots,c_k\in\Sigma_n$ such that
$$w=c_1\cdots c_k\quad\text{and}\quad|c_1|+\cdots+|c_k|=n$$
which is called a cycle decomposition. Note that we do not omit 1-cycles in a cycle decomposition. A cycle decomposition $w=c_1\cdots c_k$ is called standard if $|c_1|\le\cdots\le|c_k|$ and the least element of $c_i$ is less than the least element of $c_{i+1}$ whenever $|c_i|=|c_{i+1}|$. Clearly, every permutation has a unique standard cycle decomposition.

\begin{exam}
  A cycle decomposition $(1\;6)(2\;5)(3\;4\;7)$ is standard, but $(2\;5)(1\;6)(3\;4\;7)$ is not standard.
\end{exam}

Let $w\in\Sigma_n$ with the standard cycle decomposition $w=c_1\cdots c_k$. Then we assign to $w$ a partition of $n$
$$c(w)\coloneqq(|c_1|,\ldots,|c_k|).$$
We show a property of $c(w)$ that we are going to use. Let $a\brack b$ denote the Stirling number of the first kind.

\begin{lem}\label{counting}
  For $i\ge 0$ and $\lambda\vdash k$ with $k\le n$,
  $$\sum_{\substack{w\in\Sigma_n\\\ell(c(w))-\ell(\lambda)=i}}|\Emb(\lambda,c(w))|=\frac{n!}{\theta(\lambda)(n-k)!}{n-k\brack i}.$$
\end{lem}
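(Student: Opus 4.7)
The plan is to prove the identity by double counting: I will interpret both sides as the cardinality of the same set of triples $(A,\sigma,\tau)$.

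First, I would reinterpret the left-hand side. Fix $w\in\Sigma_n$ with standard cycle decomposition $w=c_1\cdots c_{\ell(c(w))}$. An embedding $\iota\in\Emb(\lambda,c(w))$ is, by definition, a choice of positions in $c(w)=(|c_1|,\ldots,|c_{\ell(c(w))}|)$ forming a subsequence equal to $\lambda$; equivalently, since cycles of the same length are distinguished by their position in the standard decomposition, $\iota$ is the choice of a subset $\mathcal{C}$ of $\ell(\lambda)$ cycles among $c_1,\ldots,c_{\ell(c(w))}$ whose multiset of orders equals $\lambda$. Let $A\subset\{1,\ldots,n\}$ be the union of the underlying sets of the cycles in $\mathcal{C}$. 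Then $|A|=k$, the restriction $\sigma\coloneqq w|_A$ is a permutation of $A$ of cycle type $\lambda$, and $\tau\coloneqq w|_{\{1,\ldots,n\}\setminus A}$ is the permutation of $\{1,\ldots,n\}\setminus A$ formed by the remaining cycles of $w$. The side condition $\ell(c(w))-\ell(\lambda)=i$ becomes the statement that $\tau$ has exactly $i$ cycles. Conversely, from any triple $(A,\sigma,\tau)$ of this form one recovers $(w,\iota)$ uniquely by $w=\sigma\sqcup\tau$. Hence the LHS counts the triples $(A,\sigma,\tau)$ with $A\subset\{1,\ldots,n\}$, $|A|=k$, $\sigma$ of cycle type $\lambda$, and $\tau$ having exactly $i$ cycles.

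Second, I would enumerate these triples directly: there are $\binom{n}{k}$ choices for $A$; the classical formula gives $k!/\theta(\lambda)$ permutations of a fixed $k$-element set with cycle type $\lambda$; and by definition there are ${n-k\brack i}$ permutations of a fixed $(n-k)$-element set with exactly $i$ cycles. Multiplying,
\[
\binom{n}{k}\cdot\frac{k!}{\theta(\lambda)}\cdot{n-k\brack i}=\frac{n!}{\theta(\lambda)(n-k)!}{n-k\brack i},
\]
which is precisely the RHS.

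The only step requiring care is the bijective identification of $|\Emb(\lambda,c(w))|$ with the number of cycle-subsets of $w$ of type $\lambda$; this is a direct unwinding of the definition of $\Emb$ (with $c(w)=(\mu_1^{m_1},\ldots,\mu_s^{m_s})$ in distinct parts and $\lambda=(\mu_{j_1}^{t_1},\ldots,\mu_{j_r}^{t_r})$ with $t_j\le m_{j_r}$, both counts equal $\prod_j\binom{m_{j_r}}{t_j}$), and I do not anticipate any essential obstacle beyond this routine verification.
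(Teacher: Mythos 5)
Your proof is correct and takes essentially the same double-counting approach as the paper: both identify $\sum_w|\Emb(\lambda,c(w))|$ with the number of ways to pick disjoint cycles of orders $\lambda_1,\ldots,\lambda_{\ell(\lambda)}$ and then partition the remaining $n-k$ letters into $i$ cycles. The only cosmetic difference is that you factor the first count through the triple $(A,\sigma,\tau)$ as $\binom{n}{k}\cdot k!/\theta(\lambda)$, while the paper enumerates the cycles directly in $\Sigma_n$ to get $n!/(\theta(\lambda)(n-k)!)$; these agree.
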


\begin{proof}
  Let $\lambda=(\lambda_1,\ldots,\lambda_k)$. Every subsequence of $c(w)$ which is equal to $\lambda$ is obtained by the following constructions:
  \begin{enumerate}
    \item we choose $k$ disjoint cyclic permutations of order $\lambda_1,\ldots,\lambda_k$, and
    \item we divide the remaining subset of $[n]$ by disjoint $i$ cycles.
  \end{enumerate}
  Since the number of cyclic permutations of order $d$ in $\Sigma_n$ is $\frac{n!}{d(n-d)!}$, the number of the first choices is $\frac{n!}{\theta(\lambda)(n-k)!}$. Since the Stirling number $a\brack b$ counts the number of permutations on $a$ letters having a cycle decomposition by $b$ cyclic permutations, the number of the second divisions is $n-k\brack i$. Thus the identity in the statement holds.
\end{proof}

We record an identity involving Stirling numbers that we will use later. See \cite[Proposition 1.3.4]{S} for the proof.

\begin{lem}\label{Stirling}
  There is an equality
  $$\sum_{k=1}^n{n\brack k}x^k=x(x+1)\cdots(x+n-1).$$
\end{lem}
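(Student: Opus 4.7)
The plan is to prove this classical identity by induction on $n$. The key tool is the recurrence
\[
{n+1 \brack k} = {n \brack k-1} + n {n \brack k},
\]
which reflects the dichotomy for a permutation of $[n+1]$ with $k$ cycles: either the letter $n+1$ forms a singleton cycle (so removing it leaves a permutation of $[n]$ with $k-1$ cycles), or it is inserted immediately after one of the $n$ other letters in the standard cycle notation of a permutation of $[n]$ with $k$ cycles.

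First I would verify the base case $n=1$, where both sides equal $x$. For the inductive step, assuming $\sum_{k=1}^n {n\brack k} x^k = x(x+1)\cdots(x+n-1)$, I would multiply by $(x+n)$ to get
\[
x(x+1)\cdots(x+n)=\sum_{k=1}^{n}{n\brack k}x^{k+1}+n\sum_{k=1}^{n}{n\brack k}x^k,
\]
then reindex the first sum so that the coefficient of $x^k$ on the right becomes ${n\brack k-1}+n{n\brack k}$, which is ${n+1\brack k}$ by the recurrence. This closes the induction.

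An equally clean alternative is a direct weighted-combinatorial argument: expanding $\prod_{i=1}^n (x+i-1)$, the coefficient of $x^k$ counts ways of choosing, for each $i$, either the $x$-summand or one of the $i-1$ constants. One interprets such a choice as a procedure that builds a permutation of $[n]$ with $k$ cycles by inserting the letters $1,2,\ldots,n$ in order: choosing $x$ at step $i$ opens a new cycle containing $i$, while choosing one of the $i-1$ constants specifies the letter of $[i-1]$ after which $i$ is inserted into the cycle structure assembled so far. This furnishes the desired bijection, and since both expressions coincide when $x$ is evaluated at any positive integer, they agree as polynomials.

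There is no genuine obstacle; the identity is classical, as acknowledged by the citation to \cite[Proposition 1.3.4]{S}. The only minor points requiring care are the conventions ${n\brack 0}=0$ for $n\ge 1$ and ${n\brack k}=0$ for $k>n$, together with the handling of empty products, so that the recurrence and the index shifts remain valid across the full range of summation.
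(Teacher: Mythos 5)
Your proof is correct. The paper does not prove this lemma at all---it simply cites \cite[Proposition 1.3.4]{S} (Stanley) for the identity---so there is nothing in the paper to compare against. Both of your arguments are classical and sound: the induction via the recurrence ${n+1\brack k}={n\brack k-1}+n{n\brack k}$ is exactly the standard proof, and your weighted bijection (choosing at step $i$ either to open a new cycle or to insert $i$ after one of the $i-1$ earlier letters) is essentially the argument Stanley gives. Your closing remark about the boundary conventions ${n\brack 0}=0$ and ${n\brack k}=0$ for $k>n$ is the right thing to flag; with those in place, both of your arguments go through without issue.
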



\subsection{Signed integer partitions}

A \emph{signed partition} of a positive integer $n$ is an ordered sequence of non-zero integers $\lambda=(\lambda_1,\ldots,\lambda_k)$ such that $(|\lambda_1|,\ldots,|\lambda_k|)$ is a partition of $n$.

\begin{exam}
  Sequences $(1,-1,2)$ and $(-1,1,2)$ are distinct signed partitions of $4$, because signed partitions are ordered sequences.
\end{exam}

If $\lambda$ is a signed partition of $n$, then we write
$$\lambda\vdash\pm n.$$
For $\lambda=(\lambda_1,\ldots,\lambda_k)\vdash\pm n$, we define
$$\lambda^+=(|\lambda_1|,\ldots,|\lambda_k|)\quad\text{and}\quad\sgn(\lambda)=\sgn(\lambda_1)\cdots\sgn(\lambda_k).$$
For $\lambda\vdash\pm n$ and $\mu\vdash\pm k$ with $n\ge k$, let $\Emb(\mu,\lambda)$ denote the set of ordered subsequences of $\lambda$ which are equal to $\mu$.

\begin{exam}
  If $\lambda=(1,-1,1,2,2,3,-3,4)$ and $\mu=(-1,1,2,3)$, then $|\Emb(\mu,\lambda)|=2$.
\end{exam}

If $|\Emb(\mu,\lambda)|>0$, then by choosing a specific ordered subsequence $\bar{\mu}$ of $\lambda$ which is equal to $\mu$, we define $\sgn(\mu,\lambda)$ to be the product of signs of elements of $\lambda$ which are not in $\mu$. Note that $\sgn(\mu,\lambda)$ does not depend on the choice of $\bar{\mu}$.

\begin{exam}
  If $\lambda=(1,-1,1,-1,1,2,-2,3,-3,4)$ and $\mu=(-1,1,2,3)$, then $\sgn(\mu,\lambda)=(-1)^3=-1$.
\end{exam}

A signed permutation on $n$ letters is a permutation $w$ on $\{\pm 1,\pm 2,\ldots,\pm n\}$ such that $w(-i)=-w(i)$. Then signed permutations on $n$ letters form a group, which we denote by $B_n$. Clearly, $B_n$ is isomorphic with a semidirect product $(\Z/2)^n\rtimes\Sigma_n$. Then to each $w\in B_n$ one can assign unique $s\in(\Z/2)^n$ and $v\in\Sigma_n$ such that $w=sv$. In this case, let
$$w^+\coloneqq v\quad\text{and}\quad\sgn(w)\coloneqq s_1\cdots s_n$$
where $s=(s_1,\ldots,s_n)\in(\Z/2)^n=\{-1,+1\}^n$. Let $B_n^+$ denote the subgroup of $B_n$ consisting of $w\in B_n$ with $\sgn(w)=1$.

It is obvious that every signed permutation $w\in B_n$ admits a decomposition $w=c_1\cdots c_k$ by disjoint signed cyclic permutations. A signed cycle decomposition $w=c_1,\cdots c_k$ is said to be standard if $w^+=c_1^+\cdots c_k^+$ is the standard cycle decomposition. Then every signed permutation admits a unique standard signed cycle decomposition.

To every signed permutation $w\in B_n$ with the standard signed cycle decomposition $w=c_1\cdots c_k$, we assign a signed partition of $n$
$$c(w)\coloneqq(\sgn(c_1)|c_1^+|,\ldots,\sgn(c_k)|c_k^+|).$$

\begin{lem}\label{counting signed}
  \begin{enumerate}
    \item For $i\ge 0$ and $\lambda\vdash\pm k$ with $k\le n$,
    $$\sum_{\substack{w\in B_n\\\ell(c(w)^+)-\ell(\lambda^+)=i}}|\Emb(\lambda,c(w))|=\frac{2^{n-\ell(\lambda^+)}n!}{\theta(\lambda^+)(n-k)!}{n-k\brack i}.$$
    \item Let $\lambda,\lambda'$ be signed partitions of $n-1$ such that all but one elements are the same. Then
    $$\sum_{w\in B_n^+}|\Emb(\lambda,c(w))|=\sum_{w\in B_n^+}|\Emb(\lambda',c(w))|.$$
  \end{enumerate}
\end{lem}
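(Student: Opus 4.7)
The plan is to prove both parts by lifting the sums over $B_n$ to sums over the underlying unsigned permutations $w^+ \in \Sigma_n$ and then invoking Lemma \ref{counting}. For $v \in \Sigma_n$, the lifts $w \in B_n$ with $w^+ = v$ are parametrized by sign vectors $\epsilon \in \{\pm 1\}^n$, and each cycle $c$ of $v$ lifts to a signed cycle of $w$ with sign $\prod_{s \in \mathrm{supp}(c)} \epsilon_s$. Since $c(w)^+ = c(v)$, an ordered subsequence of $c(w)$ equal to $\lambda$ is the same data as an ordered subsequence $\tau \in \Emb(\lambda^+, c(v))$ together with the requirement that the signs of the $\ell(\lambda^+)$ signed cycles selected by $\tau$ match $\sgn(\lambda_1), \ldots, \sgn(\lambda_{\ell(\lambda^+)})$.

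For part (1), fix $v$ and $\tau \in \Emb(\lambda^+, c(v))$, and let $S_1, \ldots, S_{\ell(\lambda^+)} \subset [n]$ be the supports of the selected cycles. The equations $\prod_{s \in S_j} \epsilon_s = \sgn(\lambda_j)$ for $j = 1, \ldots, \ell(\lambda^+)$ impose independent constraints on the disjoint coordinate blocks $S_j$, leaving exactly $2^{n - \ell(\lambda^+)}$ valid sign vectors. Summing over $\tau$ and then over $v$ with $\ell(c(v)) - \ell(\lambda^+) = i$ gives
\[
  \sum_{\substack{w \in B_n \\ \ell(c(w)^+) - \ell(\lambda^+) = i}} |\Emb(\lambda, c(w))| = 2^{n - \ell(\lambda^+)} \sum_{\substack{v \in \Sigma_n \\ \ell(c(v)) - \ell(\lambda^+) = i}} |\Emb(\lambda^+, c(v))|,
\]
and Lemma \ref{counting} evaluates the inner sum to $\frac{n!}{\theta(\lambda^+)(n-k)!}{n-k \brack i}$, yielding the claimed formula.

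For part (2), the same decomposition applies, now with the additional constraint $\prod_{s \in [n]} \epsilon_s = 1$ enforcing $w \in B_n^+$. Multiplying the $\ell(\lambda^+)$ cycle equations yields $\prod_{s \in \bigcup_j S_j} \epsilon_s = \sgn(\lambda)$, so the global constraint is equivalent to $\prod_{s \in R} \epsilon_s = \sgn(\lambda)$, where $R \coloneqq [n] \setminus \bigcup_j S_j$. Since $\lambda \vdash \pm(n-1)$, the set $R$ consists of a single letter, so the new equation fixes one coordinate of $\epsilon$ outside the $S_j$ and is manifestly independent of the cycle equations; this leaves $2^{n - \ell(\lambda^+) - 1}$ valid sign vectors. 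Summing and using $\sum_i {1 \brack i} = 1$ yields
\[
  \sum_{w \in B_n^+} |\Emb(\lambda, c(w))| = \frac{2^{n - \ell(\lambda^+) - 1}\, n!}{\theta(\lambda^+)}.
\]
Because $\lambda$ and $\lambda'$ are signed partitions of $n-1$ differing in exactly one entry, the differing entries must have equal absolute value (otherwise the sums of absolute values could not both equal $n-1$), so $\lambda^+ = \lambda'^+$ and $\ell(\lambda^+) = \ell(\lambda'^+)$; hence the two formulas coincide.

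The main delicate point is the independence of the sign constraints: in (1) it is immediate from the disjointness of the supports $S_j$, while in (2) the global constraint could a priori be implied by the cycle equations, but the hypothesis $k = n - 1 < n$ leaves a free letter on which the new equation acts non-trivially.
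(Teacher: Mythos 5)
Your proof is correct. For part (1), your lift-to-$\Sigma_n$ counting of sign vectors is essentially the modification-by-signs argument the paper alludes to when it says the first identity follows from the proof of Lemma \ref{counting}. For part (2), however, you take a genuinely different route: you evaluate both sums to the common explicit value $2^{n-\ell(\lambda^+)-1}n!/\theta(\lambda^+)$, observing that $\lambda^+=\lambda'^+$ and that the constraint $\prod_s\epsilon_s=1$ removes one independent degree of freedom on the single free letter outside the selected supports. The paper instead argues by a sign-flip bijection: an embedding of $\lambda$ into $c(w)$ for $w\in B_n^+$ forces the one extra entry to be $\pm 1$, and flipping the sign of the entry where $\lambda$ and $\lambda'$ differ (compensated by flipping the extra $\pm 1$) stays in $B_n^+$ and swaps the two counts. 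Your explicit count is slightly more work but yields the closed formula as a byproduct, whereas the paper's symmetry argument is shorter but gives no formula; both arguments use in an essential way the hypothesis $k=n-1<n$, which you correctly flag as the point guaranteeing that the global parity constraint acts on a non-trivial residual coordinate.
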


\begin{proof}
  The first identity is proved by the same argument as in the proof of Lemma \ref{counting} with an easy modification about signs. Let $\lambda,\lambda'$ be as in the second statement. Then each element of $\Emb(\lambda,c(w))$ and $\Emb(\lambda',c(w))$ are obtained by adding $\pm 1$ to $\lambda$ and $\lambda'$, respectively. Since all but one elements of $\lambda,\lambda'$ are the same, the difference is by sign. Thus the second identity is obtained.
\end{proof}


\section{Poincar\'e series}\label{Poincare series}

In this section, we refine the foumula in Theorem \ref{Ramras-Stafa} for the classical groups by applying the results in the previous section, where the refined formula is given in terms of (signed) integer partitions.


\subsection{Unitary groups}

We start with unitary groups. For a positive integer $k$, let
$$q_k^m(t)=(-1)^{m(k-1)}t^{(m-2)k}+\frac{(1+(-1)^{k+1}t^k)^m}{1-t^{2k}}.$$
For $\lambda=(\lambda_1,\ldots,\lambda_l)\vdash k\le n$, let
$$q^{m,n}_\lambda(t)=t^{(m-2)(n-k)}q_{\lambda_1}^m(t)\cdots q_{\lambda_l}^m(t)$$
where we set $q_\lambda^{m,n}(t)=t^{(m-2)n}$ if $\lambda$ is the empty partition.

\begin{thm}[Theorem \ref{Poincare intro}]\label{Poincare U}
  The Poincar\'e series of $\Hom(\Z^m,U(n))_1$ is given by
  $$P(\Hom(\Z^m,U(n))_1;t)=
  \begin{cases}
    \displaystyle\prod_{i=1}^n(1-t^{2i})\sum_{k=n-1}^n\sum_{\lambda\vdash k}\frac{(-1)^{n+k}}{\theta(\lambda)}q_\lambda^{m,n}(t)&(m\text{ even})\\
    \displaystyle\prod_{i=1}^n(1-t^{2i})\sum_{k=0}^n\sum_{\lambda\vdash k}\frac{(-1)^k}{\theta(\lambda)}q_\lambda^{m,n}(t)&(m\text{ odd}).
  \end{cases}$$
\end{thm}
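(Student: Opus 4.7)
The plan is to apply the Ramras-Stafa formula (Theorem \ref{Ramras-Stafa}) with $G=U(n)$, $W=\Sigma_n$, and characteristic degrees $1,2,\ldots,n$, and to translate the sum over $W$ into a sum over integer partitions by means of the cycle decomposition. For $w\in\Sigma_n$ with cycles $c_1,\ldots,c_l$ one has $\det(I+tw)=\prod_{j=1}^l(1+(-1)^{|c_j|+1}t^{|c_j|})$ and $\det(I-t^2w)=\prod_{j=1}^l(1-t^{2|c_j|})$, since the eigenvalues of a $k$-cycle are the $k$-th roots of unity. Hence the Ramras-Stafa summand at $w$ factors as $\prod_{j=1}^l f_{|c_j|}(t)$, where $f_k(t)\coloneqq(1+(-1)^{k+1}t^k)^m/(1-t^{2k})$.

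The crucial observation, built into the definition of $q^m_k$, is $f_k=q^m_k-g_k$ with $g_k(t)\coloneqq(-1)^{m(k-1)}t^{(m-2)k}$. Expanding yields
\[
  \prod_{j=1}^l f_{|c_j|}(t)=\sum_{S\subseteq c(w)}\Bigl(\prod_{c\in S}q^m_{|c|}(t)\Bigr)\Bigl(\prod_{c\notin S}(-g_{|c|}(t))\Bigr).
\]
Writing $k=\sum_{c\in S}|c|$ and $j=|c(w)\setminus S|$, the prefactor $\prod_{c\notin S}(-g_{|c|}(t))$ equals $(-1)^{j+m((n-k)-j)}t^{(m-2)(n-k)}$, and a parity check reduces the sign exponent modulo $2$ to $n-k$ when $m$ is odd and to $j$ when $m$ is even. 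The remaining $q$-factors assemble into $q^{m,n}_\lambda(t)$, where $\lambda\vdash k$ is the partition of cycle lengths in $S$.

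Next I would group terms by $\lambda$ and by $i=\ell(c(w))-\ell(\lambda)$, and apply Lemma \ref{counting} to evaluate $\sum_w|\Emb(\lambda,c(w))|$ stratified by $i$ as $\frac{n!}{\theta(\lambda)(n-k)!}{n-k\brack i}$. For $m$ odd the sign $(-1)^{n-k}$ does not depend on $i$, and Lemma \ref{Stirling} at $x=1$ gives $\sum_{i\ge 0}{n-k\brack i}=(n-k)!$, which cancels the denominator and produces a full sum over $k\in[0,n]$. For $m$ even the surviving $(-1)^i$ turns the inner sum into the polynomial $x(x+1)\cdots(x+n-k-1)$ evaluated at $x=-1$, which equals $1$ for $n-k=0$, $-1$ for $n-k=1$, and $0$ for $n-k\ge 2$ thanks to the factor $x+1$; this is precisely what collapses the outer sum to $k\in\{n-1,n\}$ with the stated signs. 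Dividing by $n!$ and restoring $\prod_{i=1}^n(1-t^{2i})$ yields the formula.

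The main obstacle is the sign bookkeeping in the expansion of $\prod_c(q^m_{|c|}-g_{|c|})$ and the resulting parity dichotomy in $m$. Once this is in hand the rest is a controlled application of Lemmas \ref{counting} and \ref{Stirling}; the latter is where the striking asymmetry between even and odd $m$ --- the collapse of the sum to $k\in\{n-1,n\}$ in the even case --- arises, via the vanishing of the Stirling polynomial $x(x+1)\cdots(x+n-k-1)$ at $x=-1$.
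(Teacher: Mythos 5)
Your proof follows essentially the same route as the paper's: apply the Ramras--Stafa formula, factor the summand over the cycle decomposition, rewrite each cyclic factor as $q^m_k-g_k$, expand the product, group by partitions $\lambda\vdash k$ and by $i=\ell(c(w))-\ell(\lambda)$, and evaluate with Lemmas~\ref{counting} and~\ref{Stirling}. One point deserves explicit attention: your sign bookkeeping gives $(-1)^{n-k}$ for $m$ odd (from $j+m((n-k)-j)\equiv n-k\pmod 2$), while the theorem as stated and the paper's own proof carry $(-1)^k$ (the paper's proof writes the sign exponent as $mk+(m+1)i$, whereas the computation actually produces $m(n-k)+(m+1)i$); these agree only up to an overall factor $(-1)^n$. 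A check at $n=1$, where $\Hom(\Z^m,U(1))_1=(S^1)^m$ has Poincar\'e series $(1+t)^m$, confirms that your $(-1)^{n-k}$ is the correct sign and the printed $(-1)^k$ is off by $(-1)^n$, so this is a typo in the paper (harmless when $n$ is even). You should flag the discrepancy explicitly rather than asserting that your computation ``produces a full sum over $k\in[0,n]$'' as though it reproduced the printed formula verbatim.
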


\begin{proof}
  The Weyl group of $U(n)$ is $\Sigma_n$ acting canonically on $\F^n$. Then we apply the formula of Theorem \ref{Ramras-Stafa} to the canonical representation of $\Sigma_n$. Let $w\in\Sigma_n$ with the standard cycle decomposition $w=c_1\cdots c_k$. Since $\det(1+tc)=1+(-1)^{|c|+1}t^{|c|}$ for a cyclic permutation $c\in\Sigma_n$ and $c_1,\ldots,c_k$ are disjoint,
  $$\frac{\det(1+tw)^m}{\det(1-t^2w)}=\prod_{i=1}^k\frac{\left(1+(-1)^{|c_i|+1}t^{|c_i|}\right)^m}{1-t^{2|c_i|}}=\prod_{i=1}^k((-1)^{m(|c_i|-1)+1}t^{(m-2)|c_i|}+q_{|c_i|}^m(t)).$$
  Then it follows from Lemma \ref{counting} that
  \begin{align*}
    \sum_{w\in\Sigma_n}\frac{\det(1+tw)^m}{\det(1-t^2w)}&=\sum_{k=0}^n\sum_{\lambda\vdash k}\sum_{i=0}^{n-k}\sum_{\substack{w\in\Sigma_n\\\ell(c(w))-\ell(\lambda)=i}}(-1)^{mk+(m+1)i}|\Emb(\lambda,c(w))|q_\lambda^{m,n}(t)\\
    &=\sum_{k=0}^n\sum_{\lambda\vdash k}\sum_{i=0}^{n-k}(-1)^{mk+(m+1)i}\frac{n!}{\theta(\lambda)(n-k)!}{n-k\brack i}q_\lambda^{m,n}(t)
  \end{align*}
  By Lemma \ref{Stirling}, $\sum_{i=1}^{n-k}(-1)^i{n-k\brack i}=0$ for $n-k\ge 2$ and $\sum_{i=1}^{n-k}{n-k\brack i}=(n-k)!$. Thus by Theorem \ref{Ramras-Stafa} the proof is complete.
\end{proof}

Applying Theorem \ref{Poincare U}, we give a formula for the Poincar\'e series of $\Hom(\Z^m,SU(n))_1$.

\begin{lem}\label{covering}
  Let $G,H$ be compact connected Lie groups, and let $\F$ be a field of characteristic zero or prime to the Weyl group of $G$. If there is a covering $G\to H$, then
  $$H^*(\Hom(\Z^m,G)_1;\F)\cong H^*(\Hom(\Z^m,H)_1;\F).$$
\end{lem}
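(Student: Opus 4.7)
The plan is to apply Baird's Theorem \ref{Baird} on both sides and reduce the claim to an isomorphism of Weyl-group invariant rings. Write the covering as $\pi\colon G\to H$ with finite kernel $K$. Since the center of a compact connected Lie group is contained in every maximal torus, $K\subset Z(G)\subset T_G$ for any maximal torus $T_G$ of $G$, and so $T_H\coloneqq\pi(T_G)=T_G/K$ is a maximal torus of $H$. Because $\pi$ is a local isomorphism, $G$ and $H$ share a Lie algebra and hence a root system, so their Weyl groups coincide; call this common group $W$, acting compatibly on the two tori.

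The concrete steps I would carry out are: first, derive the $W$-equivariant homeomorphism $G/T_G\cong H/T_H$ from the identity $H/T_H=(G/K)/(T_G/K)=G/T_G$, which gives a $W$-equivariant isomorphism of cohomology rings; second, verify that the $W$-equivariant finite covering $\pi|_{T_G}\colon T_G\to T_H$ induces an isomorphism $\pi^*\colon H^*(T_H;\F)\to H^*(T_G;\F)$, by noting that on $H^1$ this pullback is represented by an integer matrix of absolute determinant $|K|$ and hence is invertible whenever $|K|$ is a unit in $\F$; third, tensor these two isomorphisms, take $W$-invariants, and invoke Theorem \ref{Baird} on both sides to conclude that $H^*(\Hom(\Z^m,G)_1;\F)\cong H^*(\Hom(\Z^m,H)_1;\F)$.

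The main obstacle is the second step: one must verify that $|K|$ is invertible in $\F$ under the hypothesis that $\operatorname{char}\F$ is zero or coprime to $|W|$. In the intended application -- namely the covering $SU(n)\times S^1\to U(n)$ with kernel $\Z/n$, used to extract the Poincar\'e series of $\Hom(\Z^m,SU(n))_1$ from Theorem \ref{Poincare U} -- the kernel order $n$ divides $|W|=n!$ directly. More generally, for $G$ with semisimple Lie algebra this follows from the classical fact that in each simple type the determinant of the Cartan matrix, equivalently the order of $Z(\widetilde G)$, divides $|W|$, and products behave multiplicatively; since $K\subset Z(G)$, the order $|K|$ divides $|Z(\widetilde G)|$ and hence $|W|$, so the hypothesis on $\F$ suffices to make the $H^1$ matrix invertible and complete the argument.
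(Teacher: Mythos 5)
Your approach is the same in spirit as the paper's: invoke Baird's isomorphism on both sides and compare the two Weyl-group invariant rings, which reduces everything to comparing the reflection representations of $W$ on $H^1(T_G;\F)$ and $H^1(T_H;\F)$. But you insist that the pullback $\pi^*$ induced by the covering be the isomorphism, which forces you to show $|K|$ is invertible in $\F$; the paper only asserts that the two reflection representations are \emph{abstractly} isomorphic over $\F$ (as ``reflection groups over $\F$''), without claiming this isomorphism is induced by $\pi$.

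The gap is real: the claim that $|K|$ divides $|W|$ (or even has the same prime divisors as $|W|$) fails when $G$ has a torus factor, which the lemma allows and which the paper actually uses ($SU(n)\times S^1\to U(n)$, and the factorization $G\cong(G_1\times\cdots\times G_k\times T)/K$ in Theorem~\ref{top term general}). For a concrete counterexample to your step, take $G=S^1\times SU(2)$ and $H=G/(\Z/p)$ with $\Z/p\subset S^1$ and $p$ an odd prime: here $|W|=2$, so $\mathrm{char}\,\F=p$ satisfies the hypothesis, yet $|K|=p$ is not a unit in $\F$ and $\pi^*$ on $H^1(T_H;\F)\to H^1(T_G;\F)$ is \emph{not} an isomorphism. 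Nonetheless the lemma holds: the two $\F W$-modules $H^1(T_G;\F)$ and $H^1(T_H;\F)$ are abstractly isomorphic (in the example, both are $\mathbf 1\oplus\mathrm{sgn}$). The correct general justification, which the paper leaves implicit, is representation-theoretic rather than via the covering map: both lattices $\pi_1(T_G)$ and $\pi_1(T_H)$ become isomorphic $\Q W$-modules after tensoring with $\Q$, so their reductions mod $p$ have the same Brauer character; since $p\nmid|W|$ makes $\overline\F W$ semisimple, this forces the $\overline\F W$-modules to be isomorphic, and Noether--Deuring descends the isomorphism to $\F$. Your argument does suffice for the specific covering $SU(n)\times S^1\to U(n)$ (you check $n\mid n!$ directly) and for all applications that work over $\Q$, but it does not prove the lemma as stated.
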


\begin{proof}
  Since there is a covering $G\to H$, the Weyl groups of $G$ and $H$ have the same order. Then the characteristic of $\F$ is also prime to the order of the Weyl group of $H$, and so the Weyl groups of $G$ and $H$ are isomorphic as reflection groups over $\F$. Then by Theorem \ref{Baird} below, the proof is done.
\end{proof}

\begin{cor}\label{Poincare SU}
  The Poincar\'e series of $\Hom(\Z^m,SU(n))_1$ is given by
  $$P(\Hom(\Z^m,SU(n))_1;t)=
  \begin{cases}
    \displaystyle\prod_{i=1}^n(1-t^{2i})\sum_{k=n-1}^n\sum_{\lambda\vdash k}\frac{(-1)^{n+k}}{\theta(\lambda)(1+t)^m}q_\lambda^{m,n}(t)&(m\text{ even})\\
    \displaystyle\prod_{i=1}^n(1-t^{2i})\sum_{k=0}^n\sum_{\lambda\vdash k}\frac{(-1)^k}{\theta(\lambda)(1+t)^m}q_\lambda^{m,n}(t)&(m\text{ odd}).
  \end{cases}$$
\end{cor}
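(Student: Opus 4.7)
The plan is to reduce Corollary \ref{Poincare SU} to Theorem \ref{Poincare U} by a covering argument. The standard $n$-fold covering
\[
  \pi\colon SU(n)\times S^1\to U(n),\quad (A,z)\mapsto zA,
\]
has kernel the cyclic subgroup generated by $(\zeta^{-1}I_n,\zeta)$ for $\zeta$ a primitive $n$-th root of unity. Since $\F$ has characteristic zero or prime to $n!=|\Sigma_n|$ (the common order of the Weyl groups of $SU(n)\times S^1$ and $U(n)$), Lemma \ref{covering} applies and yields
\[
  H^*(\Hom(\Z^m,U(n))_1;\F)\cong H^*(\Hom(\Z^m,SU(n)\times S^1)_1;\F).
\]

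Next I would exploit the fact that $\Hom(\Z^m,-)$ is multiplicative in the target: a commuting $m$-tuple in $G\times H$ is precisely a pair consisting of a commuting $m$-tuple in $G$ and one in $H$. This gives
\[
  \Hom(\Z^m,SU(n)\times S^1)\cong\Hom(\Z^m,SU(n))\times\Hom(\Z^m,S^1)=\Hom(\Z^m,SU(n))\times(S^1)^m.
\]
Because $S^1$ is abelian, the second factor is already connected, so the identity component of the product is the product of identity components, namely $\Hom(\Z^m,SU(n))_1\times(S^1)^m$. Applying the K\"unneth formula and using $P((S^1)^m;t)=(1+t)^m$ gives
\[
  P(\Hom(\Z^m,U(n))_1;t)=P(\Hom(\Z^m,SU(n))_1;t)\cdot(1+t)^m.
\]

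Finally, I would divide both sides by $(1+t)^m$ and substitute the explicit expression for $P(\Hom(\Z^m,U(n))_1;t)$ supplied by Theorem \ref{Poincare U} to obtain the stated formula, since the only difference between the two formulas is the extra factor of $(1+t)^m$ in the denominator. No step here poses a genuine obstacle; the only points requiring care are verifying the hypothesis of Lemma \ref{covering} (which is immediate once one observes that both groups have Weyl group $\Sigma_n$) and checking that the identity component respects the product decomposition (which is automatic because $(S^1)^m$ is connected).
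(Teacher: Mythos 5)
Your proof is correct and follows essentially the same route as the paper: both use the $n$-fold covering $SU(n)\times S^1\to U(n)$ together with Lemma \ref{covering} to identify $H^*(\Hom(\Z^m,U(n))_1;\F)$ with $H^*(\Hom(\Z^m,SU(n))_1;\F)\otimes H^*((S^1)^m;\F)$, and then divide the formula of Theorem \ref{Poincare U} by $(1+t)^m$. You spell out the product decomposition and the identification of path components more explicitly than the paper does, but the substance is identical.
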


\begin{proof}
  Since $U(n)=(SU(n)\times S^1)/(\Z/n)$, it follows from Lemma \ref{covering} that
  \begin{equation}
    \label{U-SU}
    H^*(\Hom(\Z^m,U(n))_1;\F)\cong H^*(\Hom(\Z^m,SU(n))_1;\F)\otimes H^*((S^1)^m;\F)
  \end{equation}
  where $\Hom(\Z^m,S^1)_1=(S^1)^m$. Then by Theorem \ref{Poincare U} the proof is complete.
\end{proof}

\begin{exam}\label{example Poincare series}
  We give example calculations of the Poincar\'e series of $\Hom(\Z^2,SU(n))_1$ by using Theorem \ref{Poincare SU}.

  \begin{align*}
    P(\Hom(\Z^2,SU(2))_1;t)&=1+{t^{2}}+2 {t^{3}}\\
    P(\Hom(\Z^2,SU(3))_1;t)&=1+{t^{2}}+2 {t^{3}}+2 {t^{4}}+4 {t^{5}}+{t^{6}}+2 {t^{7}}+3 {t^{8}}\\
    P(\Hom(\Z^2,SU(4))_1;t)&=1+{t^{2}}+2 {t^{3}}+2 {t^{4}}+4 {t^{5}}+4 {t^{6}}+8 {t^{7}}+6 {t^{8}}+6 {t^{9}}+8 {t^{10}}\\
    &\quad+6 {t^{11}}+7 {t^{12}}+2 {t^{13}}+3 {t^{14}}+4 {t^{15}}\\
    P(\Hom(\Z^2,SU(5))_1;t)&=1+{t^{2}}+2 {t^{3}}+2 {t^{4}}+4 {{x}^{5}}+4 {t^{6}}+8 {t^{7}}+10 {t^{8}}+14 {t^{9}}\\
    &\quad+13 {t^{10}}+16 {t^{11}}+22 {t^{12}}+18 {t^{13}}+21 {t^{14}}+20 {t^{15}}+22 {t^{16}}\\
    &\quad+18 {t^{17}}+14 {t^{18}}+14 {t^{19}}+10 {t^{20}}+10 {t^{21}}+3 {t^{22}}+4 {t^{23}}+5 {t^{24}}
  \end{align*}

  \begin{align*}
    P(\Hom(\Z^2,SU(6))_1;t)&=1+{t^{2}}+2 {t^{3}}+2 {t^{4}}+4 {t^{5}}+4 {t^{6}}+8 {t^{7}}+10 {t^{8}}+14 {t^{9}}+18 {t^{10}}\\
    &\quad+26 {t^{11}}+29 {t^{12}}+32 {t^{13}}+43 {t^{14}}+46 {t^{15}}+54 {t^{16}}+54 {t^{17}}\\
    &\quad+62 {t^{18}}+66 {t^{19}}+65 {t^{20}}+62 {t^{21}}+59 {t^{22}}+64 {t^{23}}+52 {t^{24}}\\
    &\quad+48 {t^{25}}+44 {t^{26}}+40 {t^{27}}+30 {t^{28}}+22 {t^{29}}+20 {t^{30}}+14 {t^{31}}\\
    &\quad+13 {t^{32}}+4 {t^{33}}+5 {t^{34}}+6 {t^{35}}
  \end{align*}

  Observe the following two things from these Poincar\'e series.
  \begin{enumerate}
    \item The coefficient of the top term of the Poincar\'e series of $\Hom(\Z^2,SU(n))_1$ is $n$ for $2\le n\le 6$.

    \item The Poincar\'e series of $\Hom(\Z^2,SU(n))_1$ and $\Hom(\Z^2,SU(n+1))_1$ are equal in dimension $2n-1$ for $2\le n\le 5$.
  \end{enumerate}

  The first observation will be proved to be true for every $n$ in Section \ref{Top terms}, and the second will be justified by homological stability proved in Section \ref{Homological stability}.
\end{exam}

\begin{rem}
  The homology dimension of $\Hom(\Z^2,G)_1$ can be greater than the degree of its Poincar\'e series. For example, as is observed in \cite{C}, there is a stable homotopy decomposition
  $$\Hom(\Z^2,SU(2))_1\simeq_sS^2\vee S^3\vee S^3\vee\Sigma^2\R P^2,$$
  implying that the homology dimension of $\Hom(\Z^2,SU(2))_1$ is four. But the degree of its Poincar\'e series is three. This occurs because we are taking the ground field of characteristic zero or greater than two for the Poincar\'e series.
\end{rem}


\subsection{Symplectic and special orthogonal groups}

We refine the formula for symplectic and special orthogonal groups. For a nonzero integer $k$, let
$$q_k^m(t)=(-1)^{m(k+1)}\sgn(k)^{m+1}t^{(m-2)|k|}+\frac{(1+(-1)^{k+1}\sgn(k)t^{|k|})^m}{1-\sgn(k)t^{2|k|}}$$
and for $\lambda=(\lambda_1,\ldots,\lambda_l)\vdash\pm k$ with $k\le n$, let
$$q_\lambda^{m,n}=t^{(m-2)(n-|k|)}q_{\lambda_1}^m(t)\cdots q_{\lambda_l}^m(t).$$
We set $q_\lambda^{m,n}=t^{(m-2)n}$ for the empty partition $\lambda$ as in the case of $U(n)$.

\begin{thm}\label{Poincare Sp}
  The Poincar\'e series of $\Hom(\Z^m,Sp(n))_1$ is given by
  $$P(\Hom(\Z^m,Sp(n))_1;t)=
  \begin{cases}
    \displaystyle\prod_{i=1}^n(1-t^{4i})\sum_{\lambda\vdash\pm n}\frac{1}{2^{\ell(\lambda^+)}\theta(\lambda^+)}q_\lambda^{m,n}(t)&(m\text{ even})\\
    \displaystyle\prod_{i=1}^n(1-t^{4i})\sum_{k=0}^n\sum_{\lambda\vdash\pm k}\frac{(-1)^{n-k}}{2^{\ell(\lambda^+)}\theta(\lambda^+)}q_\lambda^{m,n}(t)&(m\text{ odd}).
  \end{cases}$$
\end{thm}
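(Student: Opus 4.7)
The plan is to follow the pattern of the proof of Theorem \ref{Poincare U}, substituting signed symmetric groups and signed partitions throughout. Apply Theorem \ref{Ramras-Stafa} to the defining representation of $W=B_n$ on $\F^n$; the characteristic degrees are $2,4,\ldots,2n$, so the prefactor is $\prod_{i=1}^n(1-t^{4i})$. For $w\in B_n$ with standard signed cycle decomposition $w=c_1\cdots c_K$, each signed cycle $c_j$ of underlying length $|c_j^+|$ and overall sign $\sgn(c_j)$ has characteristic polynomial $x^{|c_j^+|}-\sgn(c_j)$, giving
\[
\det(1+tc_j)=1+(-1)^{|c_j^+|+1}\sgn(c_j)t^{|c_j^+|}\quad\text{and}\quad\det(1-t^2c_j)=1-\sgn(c_j)t^{2|c_j^+|}.
\]
Writing $a_j=\sgn(c_j)|c_j^+|$ so that $c(w)=(a_1,\ldots,a_K)$, the definition of $q_k^m(t)$ rearranges each factor to yield
\[
\frac{\det(1+tw)^m}{\det(1-t^2w)}=\prod_{j=1}^K\left(q_{a_j}^m(t)-(-1)^{m(a_j+1)}\sgn(a_j)^{m+1}t^{(m-2)|a_j|}\right).
\]

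Expanding this product indexes terms by subsets $S\subseteq\{1,\ldots,K\}$: the cycles indexed by $S$ form a subsequence $\lambda$ of $c(w)$ contributing $\prod_{j\in S}q_{a_j}^m(t)$, while the complementary set $R$ contributes a product of secondary terms whose overall sign depends on $\sgn(a_j)$ for $j\in R$. The crucial step is then summing over $w\in B_n$: the signs of cycles not appearing in the subsequence are free, and the parity of $m$ dictates what survives. For $m$ even one has $\sgn(a_j)^{m+1}=\sgn(a_j)$, and summing independently over the signs of the $i=|R|$ remaining cycles annihilates every contribution unless $R=\emptyset$, forcing $\lambda\vdash\pm n$. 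For $m$ odd one has $\sgn(a_j)^{m+1}=1$, so the $R$-factor is sign-independent and collapses to $(-1)^{n-k}t^{(m-2)(n-k)}$; the per-pair contribution therefore equals $(-1)^{n-k}q_\lambda^{m,n}(t)$ after absorbing the $t^{(m-2)(n-k)}$ into $q_\lambda^{m,n}(t)=t^{(m-2)(n-k)}\prod_jq_{\lambda_j}^m(t)$.

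With the sign bookkeeping settled, Lemma \ref{counting signed}(1) converts the inner count $\sum_{w}|\Emb(\lambda,c(w))|$ restricted by $\ell(c(w)^+)-\ell(\lambda^+)=i$ into $\frac{2^{n-\ell(\lambda^+)}n!}{\theta(\lambda^+)(n-k)!}{n-k\brack i}$, and for $m$ odd Lemma \ref{Stirling} at $x=1$ collapses $\sum_i{n-k\brack i}=(n-k)!$; for $m$ even only $i=0$ appears and ${0\brack 0}=1$. Dividing by $|B_n|=2^nn!$ then cancels the factor $2^{n-\ell(\lambda^+)}n!$ against the denominator $2^{\ell(\lambda^+)}\theta(\lambda^+)$, reproducing both displayed formulas. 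The main obstacle is the careful tracking of signs through the expansion---in particular recognizing that, for $m$ even, the residual factor $\prod_{j\in R}\sgn(a_j)$ annihilates every term with nonempty $R$---which is precisely what forces the different summation ranges in the two cases of the theorem.
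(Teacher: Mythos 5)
Your proof is correct and follows the same route as the paper's: apply the Ramras--Stafa formula to the defining representation of $B_n$, compute the determinant factors for signed cycles, expand the product over cycles and index the terms by (signed sub)partitions, then show that for $m$ even the sum over free signs kills every term with a nonempty complementary set $R$ (forcing $\lambda\vdash\pm n$), while for $m$ odd the complementary factor is sign-independent and Lemma \ref{counting signed}(1) together with Lemma \ref{Stirling} at $x=1$ collapses the sum over $i$. The only cosmetic slip is that for $m$ even the residual factor is $(-1)^{|R|}\prod_{j\in R}\sgn(a_j)$ rather than $\prod_{j\in R}\sgn(a_j)$, but the extra $(-1)^{|R|}$ does not affect the vanishing argument.
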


\begin{proof}
  The Weyl group of $Sp(n)$ is $B_n$ acting canonically on $\F^n$. Then we apply the formula of Theorem \ref{Ramras-Stafa} to the canonical representation of $B_n$. Then since $\det(1+tc)=1+(-1)^{|c|+1}\sgn(c)t^{|c|}$ for a signed cyclic permutation $c\in B_n$,
  for $w\in B_n$ with the standard cycle decomposition $w=c_1\cdots c_k$, we have
  \begin{align*}
    &\frac{\det(1+tw)^m}{\det(1-t^2w)}\\
    &=\prod_{i=1}^k((-1)^{m(|c_i^+|+1)+1}\sgn(c_i)^{m+1}t^{(m-2)|c_i^+|}+q^m_{\sgn(c_i)|c_i|}(t))\\
    &=\sum_{k=0}^n\sum_{i=0}^{n-k}\sum_{\substack{\lambda\vdash\pm k\\\ell(c(w)^+)-\ell(\lambda^+)=i}}(-1)^{m(n-k)+(m+1)i}\sgn(\lambda,c(w))^{m+1}|\Emb(\lambda,c(w))|q_\lambda^{m,n}(t).
  \end{align*}
  It is obvious that
  $$\sum_{s\in (\Z/2)^n}\sum_{\substack{\lambda\vdash\pm k\\\ell(c(sw)^+)-\ell(\lambda^+)=i}}\sgn(\lambda,c(sw))|\Emb(\lambda,c(sw))|=0$$
  unless $i=0$.
  Then it follows from Lemma \ref{counting signed} that for $m$ even
  $$\sum_{w\in B_n}\frac{\det(1+tw)^m}{\det(1-t^2w)}=\sum_{\lambda\vdash\pm n}\frac{2^{n-\ell(\lambda^+)}}{\theta(\lambda^+)}q_\lambda^{m,n}(t).$$
  For $m$ odd, by Lemmas \ref{Stirling} and \ref{counting signed},
  \begin{align*}
    \sum_{w\in B_n}\frac{\det(1+tw)^m}{\det(1-t^2w)}&=\sum_{k=0}^n\sum_{i=0}^{n-k}\sum_{\lambda\vdash\pm k}\frac{(-1)^{n-k}2^{n-\ell(\lambda^+)}n!}{\theta(\lambda^+)(n-k)!}{n-k\brack i}q_\lambda^{m,n}(t)\\
    &=\sum_{k=0}^n\sum_{\lambda\vdash\pm k}\frac{(-1)^{n-k}2^{n-\ell(\lambda^+)}n!}{\theta(\lambda^+)}q_\lambda^{m,n}(t).
  \end{align*}
  Thus by Theorem \ref{Ramras-Stafa} the proof is complete.
\end{proof}

\begin{thm}\label{cohomology SO Sp}
  Let $\F$ be a field of characteristic zero or prime to $2(n!)$. Then
  \[
    H^*(\Hom(\Z^m,Sp(n))_1;\F)\cong H^*(\Hom(\Z^m,SO(2n+1))_1;\F).
  \]
\end{thm}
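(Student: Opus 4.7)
The plan is to apply Theorem \ref{Baird} and observe that both sides depend only on the reflection group $W = B_n$ together with its canonical action on $\F^n$, a datum which is shared by $Sp(n)$ and $SO(2n+1)$. Note first that the hypothesis on $\mathrm{char}\,\F$ being zero or prime to $2(n!)$ is equivalent to being prime to $|B_n| = 2^n n!$, so Theorem \ref{Baird} applies to both groups and gives
\[
  H^*(\Hom(\Z^m,G)_1;\F) \cong (H^*(G/T;\F) \otimes H^*(T;\F)^{\otimes m})^W
\]
for $G = Sp(n)$ and $G = SO(2n+1)$, with $T \cong T^n$ a maximal torus in each case.

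Next, I would verify that the two right-hand sides coincide as graded $W$-algebras. In both cases the Weyl group $W = B_n$ acts on $H^2(BT;\F) \cong \F^n$ by the canonical signed permutation representation, so the actions on $\P(n) = H^*(BT;\F)$ are the same, and the characteristic degrees $2, 4, \ldots, 2n$ agree (see the table in Section \ref{Cohomology and Weyl groups}). This yields a canonical identification of the rings of coinvariants $H^*(G/T;\F)$ as graded $W$-algebras. Similarly, $H^*(T;\F) \cong \Lambda(y_1, \ldots, y_n)$ with $W$ acting on the odd-degree generators $y_1, \ldots, y_n$ by the same signed permutation representation, regardless of whether $T$ is taken inside $Sp(n)$ or $SO(2n+1)$.

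Combining these identifications, the $W$-algebras $H^*(G/T;\F) \otimes H^*(T;\F)^{\otimes m}$ agree for $G = Sp(n)$ and $G = SO(2n+1)$, hence so do their invariant subalgebras, which proves the theorem. There is no serious obstacle here: the argument is formal, relying only on Baird's theorem and the fact that $Sp(n)$ and $SO(2n+1)$ share the same reflection datum $(W, T)$ even though there is no covering between them, so Lemma \ref{covering} does not apply directly. The only point to check carefully is that the identification of the two reflection representations of $B_n$ is compatible with the grading in both the polynomial and exterior factors, which is immediate from the fact that both representations are the canonical one on $\F^n$.
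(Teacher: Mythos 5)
Your proof is correct and follows essentially the same route as the paper: apply Theorem \ref{Baird} to both groups and observe that $Sp(n)$ and $SO(2n+1)$ share the Weyl group $B_n$ with the same canonical reflection representation (over a field of characteristic different from $2$), so the invariant rings on the right-hand side coincide. The paper states this in two sentences; you have simply unpacked the identification of coinvariants, exterior factors, and gradings, and you correctly note that Lemma \ref{covering} does not apply here since there is no covering between $Sp(n)$ and $SO(2n+1)$.
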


\begin{proof}
  The Weyl groups of $Sp(n)$ and $SO(2n+1)$ are isomorphic as reflection groups over a field of characteristic $>2$. Then by Theorem \ref{Baird}, we obtain the isomorphism in the statement.
\end{proof}

The following corollary is immediate from Theorem \ref{cohomology SO Sp}

\begin{cor}\label{Poincare SO(2n+1)}
  $P(\Hom(\Z^m,SO(2n+1))_1;t)=P(\Hom(\Z^m,Sp(n))_1;t)$.
\end{cor}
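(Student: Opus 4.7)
The plan is to read this off from Theorem \ref{cohomology SO Sp} essentially for free, since the corollary is just the numerical shadow of the ring isomorphism.

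First I would recall from the convention fixed right after formula \eqref{Ramras-Stafa intro} in the introduction that $P(\Hom(\Z^m,G)_1;t)$ is defined as the Poincar\'e series of $H^*(\Hom(\Z^m,G)_1;\F)$ for any field $\F$ of characteristic zero or prime to $|W|$, and this is independent of the choice of such $\F$. For both $G=Sp(n)$ and $G=SO(2n+1)$, the Weyl group has order $2^n\, n!$, whose prime divisors are exactly those of $2(n!)$. Hence any field $\F$ of characteristic zero or prime to $2(n!)$ is an allowable coefficient field for the Poincar\'e series of both $\Hom(\Z^m,Sp(n))_1$ and $\Hom(\Z^m,SO(2n+1))_1$.

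Second, I would fix such an $\F$ and invoke Theorem \ref{cohomology SO Sp}, which gives a graded isomorphism
\[
  H^*(\Hom(\Z^m,Sp(n))_1;\F)\cong H^*(\Hom(\Z^m,SO(2n+1))_1;\F).
\]
Taking Poincar\'e series on both sides yields the claimed equality.

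There is essentially no obstacle, since Theorem \ref{cohomology SO Sp} has already done the work: the identification of the Weyl groups of $Sp(n)$ and $SO(2n+1)$ as reflection groups in characteristic away from $2$ makes the invariant rings on the right-hand side of Baird's isomorphism \eqref{Baird intro} identical, so the cohomology rings (and thus the Poincar\'e series) coincide. The only mild subtlety to flag is the compatibility of coefficient conventions discussed in the first paragraph above, but this is routine.
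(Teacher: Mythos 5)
Your proof is correct and takes exactly the approach the paper intends: the paper states the corollary is immediate from Theorem \ref{cohomology SO Sp}, and you have simply made explicit the passage from the graded isomorphism to the Poincar\'e series, including the (routine) check that the coefficient-field conventions match.
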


Then by Theorem \ref{Poincare Sp}, we obtain a formula for the Poincar\'e series of $\Hom(\Z^m,SO(2n+1))_1$.

\begin{thm}\label{Poincare SO(2n)}
  The Poincar\'e series of $\Hom(\Z^m,SO(2n))_1$ is given by
  $$(1-t^{2n})\prod_{i=1}^{n-1}(1-t^{4i})\left(\sum_{\lambda\vdash\pm(n-1)}\frac{-\sgn(\lambda)}{2^{\ell(\lambda^+)}\theta(\lambda^+)}q_\lambda^{m,n}+\sum_{\lambda\vdash\pm n}\frac{1}{2^{\ell(\lambda^+)}\theta(\lambda^+)}q_\lambda^{m,n}\right)$$
  for $m$ even and
  $$(1-t^{2n})\prod_{i=1}^{n-1}(1-t^{4i})\sum_{k=0}^n\sum_{\lambda\vdash\pm k}\frac{(-1)^{n-k}}{2^{\ell(\lambda^+)}\theta(\lambda^+)}q_\lambda^{m,n}$$
  for $m$ odd.
\end{thm}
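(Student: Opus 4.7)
The plan is to imitate the proof of Theorem \ref{Poincare Sp} for $Sp(n)$, exploiting the fact that the Weyl group of $SO(2n)$ is the index-two subgroup $B_n^+\subset B_n$. I would start from Theorem \ref{Ramras-Stafa} with $W=B_n^+$, noting that the characteristic degrees $2,4,\ldots,2n-2,n$ produce exactly the prefactor $(1-t^{2n})\prod_{i=1}^{n-1}(1-t^{4i})$, and then write the sum over $B_n^+$ using the identity $2\cdot\mathbf{1}_{B_n^+}(w)=1+\sgn(w)$:
\[
\sum_{w\in B_n^+}\frac{\det(1+tw)^m}{\det(1-t^2w)}=\tfrac{1}{2}\sum_{w\in B_n}\frac{\det(1+tw)^m}{\det(1-t^2w)}+\tfrac{1}{2}\sum_{w\in B_n}\sgn(w)\frac{\det(1+tw)^m}{\det(1-t^2w)}.
\]
The first sum has already been evaluated in the proof of Theorem \ref{Poincare Sp}, since the Weyl group of $Sp(n)$ is all of $B_n$, so the remaining task is to evaluate the twisted sum and assemble the pieces.

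For the twisted sum, I would reuse the expansion of $\det(1+tw)^m/\det(1-t^2w)$ from the Sp proof, indexing contributions by an embedding $\lambda\hookrightarrow c(w)$ of a signed partition $\lambda\vdash\pm k$ into the signed cycle structure, and then apply the identity $\sgn(w)=\sgn(c(w))=\sgn(\lambda)\sgn(\lambda,c(w))$, valid whenever $|\Emb(\lambda,c(w))|>0$. Multiplying through, the sign factor $\sgn(w)\sgn(\lambda,c(w))^{m+1}$ collapses to $\sgn(\lambda)\sgn(\lambda,c(w))^{m+2}$. For $m$ odd, $m+2$ is odd, so the factor $\sgn(\lambda,c(w))$ survives and the same sign-flipping cancellation used in the Sp proof forces $i:=\ell(c(w)^+)-\ell(\lambda^+)=0$, confining the new contribution to $\lambda\vdash\pm n$. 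For $m$ even, $m+2$ is even, the signs collapse to $\sgn(\lambda)$ with no further cancellation, and I would apply Lemma \ref{counting signed}(1) and then Lemma \ref{Stirling} at $x=-1$ to pick up only the terms with $n-k\in\{0,1\}$; these produce the $\lambda\vdash\pm n$ and $\lambda\vdash\pm(n-1)$ sums in the statement, the latter acquiring the characteristic sign $-\sgn(\lambda)$ from the Stirling evaluation at $n-k=1$.

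The main obstacle I expect is the bookkeeping when combining the twisted sum with the $Sp(n)$ output and rescaling by $|B_n^+|=|B_n|/2$. For $m$ even, the untwisted Sp sum already contributes on $\lambda\vdash\pm n$, and those two contributions must merge into the coefficient appearing in the theorem; for $m$ odd, the untwisted sum contributes on all $k$, and the twisted sum adds only a single $\sgn(\lambda)$-weighted piece at $k=n$. Lemma \ref{counting signed}(2) enters naturally as a consistency check at this stage, since it expresses the invariance of $B_n^+$ embedding counts under sign-flips of single entries of $\lambda\vdash\pm(n-1)$, which is exactly the level at which the Stirling reduction lands in the $m$ even case. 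Once the twisted sum is recorded in terms of the polynomials $q_\lambda^{m,n}(t)$, multiplying by the prefactor from Step 1 yields the claimed formula.
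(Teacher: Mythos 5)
Your decomposition $\sum_{w\in B_n^+}=\tfrac12\bigl(\sum_{w\in B_n}+\sum_{w\in B_n}\sgn(w)\bigr)$ is a genuinely different route from the paper's, which computes directly over $B_n^+$ by replacing the factor $2^{n-\ell(\lambda^+)}$ of Lemma \ref{counting signed}(1) by $2^{n-\ell(\lambda^+)-1}$. Your handling of the twisted sum is correct as far as it goes: for $m$ odd the $(\Z/2)^n$-averaging cancellation from the $Sp(n)$ proof kills everything except $i=0$, hence $k=n$; for $m$ even Lemma \ref{Stirling} at $x=-1$ leaves only $n-k\in\{0,1\}$; and in both cases the surviving sign is $\sgn(\lambda)$. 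This route also buys you something the paper's does not: it avoids the one place where the ``halved'' $B_n^+$ count is actually false, namely $k=n$, $i=0$. There $c(w)$ has no cycle outside $\lambda$, so the constraint $\sgn(w)=+1$ does not simply halve the count; one finds $\sum_{w\in B_n^+}|\Emb(\lambda,c(w))|$ equals $0$ when $\sgn(\lambda)=-1$ and the full $B_n$ count when $\sgn(\lambda)=+1$.

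The flip side is that, carried out honestly, your computation will not ``merge into the coefficient appearing in the theorem.'' Adding the twisted sum to the $Sp(n)$ sum and dividing by $|B_n^+|$, the coefficient of $q_\lambda^{m,n}$ at $\lambda\vdash\pm n$ becomes $\dfrac{1+\sgn(\lambda)}{2^{\ell(\lambda^+)}\theta(\lambda^+)}$ rather than $\dfrac{1}{2^{\ell(\lambda^+)}\theta(\lambda^+)}$ (equivalently, an extra summand $\sum_{\lambda\vdash\pm n}\frac{\sgn(\lambda)}{2^{\ell(\lambda^+)}\theta(\lambda^+)}q_\lambda^{m,n}$ appears in the $m$ odd case). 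These agree only if $\sum_{\lambda^+=\mu}\sgn(\lambda)\,q_\lambda^{m,n}=\prod_j\bigl(q^m_{\mu_j}-q^m_{-\mu_j}\bigr)$ vanishes for every $\mu\vdash n$, and it does not. A direct check makes the discrepancy concrete: for $n=2$, $m=1$ the displayed formula of the theorem produces $1+t^3-t^4+\cdots$, which has a negative coefficient and cannot be $P(SO(4);t)=(1+t^3)^2$, whereas the formula with the corrected $\lambda\vdash\pm n$ term does agree with $(1+t^3)^2$ through the degrees I checked. So your method is sound and in fact repairs the argument, but you should be aware that following it through forces a correction to the $\lambda\vdash\pm n$ term in the statement before the pieces can be assembled.
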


\begin{proof}
  The Weyl group of $SO(2n)$ is $B_n^+$ with the canonical representation. Then we apply the formula of Theorem \ref{Ramras-Stafa} to the canonical representation of $B_n^+$. As in the case of $Sp(n)$, we can show the identity
  \begin{align*}
    &\frac{\det(1+tw)^m}{\det(1-t^2w)}\\
    &=\sum_{k=0}^n\sum_{i=0}^{n-k}\sum_{\substack{\lambda\vdash\pm k\\\ell(c(w)^+)-\ell(\lambda^+)=i}}(-1)^{m(n-k)+(m+1)i}\sgn(\lambda,c(w))^{m+1}|\Emb(\lambda,c(w))|q_\lambda^{m,n}(t)
  \end{align*}
  for $w\in B_n^+$. By Lemma \ref{counting signed},
  $$\sum_{\substack{w\in B_n^+\\\ell(c(w)^+)-\ell(\lambda^+)=i}}|\Emb(\lambda,c(w))|=\frac{2^{n-\ell(\lambda^+)-1}n!}{\theta(\lambda^+)(n-k)!}{n-k\brack i}.$$
  Since $\sgn(c(w))=\sgn(w)=1$ for $w\in B_n^+$, $\sgn(\lambda,c(w))=\sgn(\lambda)$.
  Then it follows that
  \begin{align*}
    &\sum_{w\in B_n^+}\frac{\det(1+tw)^m}{\det(1-t^2w)}\\
    &=\sum_{k=0}^n\sum_{i=0}^{n-k}\sum_{\substack{\lambda\vdash\pm k\\\ell(c(w)^+)-\ell(\lambda^+)=i}}(-1)^{m(n-k)+(m+1)i}\sgn(\lambda)^{m+1}\frac{2^{n-\ell(\lambda^+)-1}n!}{\theta(\lambda^+)(n-k)!}{n-k\brack i}q_\lambda^{m,n}.
  \end{align*}
  By Lemma \ref{Stirling}, $\sum_{i=1}^{n-k}(-1)^i{n-k\brack i}=0$ for $n-k\ge 2$. Hence for $m$ even,
  $$\sum_{w\in B_n^+}\frac{\det(1+tw)^m}{\det(1-t^2w)}=-\sum_{\lambda\vdash\pm n-1}\frac{\sgn(\lambda)2^{n-\ell(\lambda^+)-1}n!}{\theta(\lambda^+)}q_\lambda^{m,n}+\sum_{\lambda\vdash\pm n}\frac{2^{n-\ell(\lambda^+)-1}n!}{\theta(\lambda^+)}q_\lambda^{m,n}.$$
  For $m$ odd, by Lemmas \ref{Stirling} and \ref{counting signed},
  $$\sum_{w\in B_n^+}\frac{\det(1+tw)^m}{\det(1-t^2w)}=\sum_{k=0}^n\sum_{\lambda\vdash\pm k}\frac{(-1)^{n-k}2^{n-\ell(\lambda^+)-1}n!}{\theta(\lambda^+)}q_\lambda^{m,n}.$$
  Thus by Theorem \ref{Ramras-Stafa} the proof is complete.
\end{proof}


\section{Top terms}\label{Top terms}

In this section, we determine the top term of the Poincar\'e series of $\Hom(\Z^m,G)_1$ for the classical group $G$ by applying the formulae obtained in the previous section. We also determine the top term of the Poincar\'e series of $\Hom(\Z^2,G)_1$ for any compact connected Lie group $G$ by combining the results for the classical group and a computer calculation for the exceptional groups. This has an application to the rational homotopy of $\Hom(\Z^m,G)_1$.


\subsection{Classical groups}

We determine the top term of the Poincar\'e series of $\Hom(\Z^m,G)_1$ for the classical group $G$. Define
$$c(m,G)=
\begin{cases}
  m+n-2&(G=U(n),SU(n))\\
  m+n-1&(G=Sp(n),SO(2n+1),SO(2n))
\end{cases}$$
and
$$d(m,G)=
\begin{cases}
  n^2+(m-1)n&(G=U(n))\\
  n^2+(m-1)(n-1)-1&(G=SU(n))\\
  2n^2+mn&(G=Sp(n),SO(2n+1))\\
  2n^2+(m-2)n&(G=SO(2n)).
\end{cases}$$
We also set $\epsilon(U(n))=1$ and $\epsilon(G)=0$ for other classical groups $G$.

\begin{thm}[Theorem \ref{top term intro}]\label{top term classical}
  Let $G$ be the classical group, where we assume $n>2$ for $G=SO(2n)$. The top term of the Poincar\'e series of $\Hom(\Z^m,G)_1$ is
  $$\begin{cases}
    t^{d(m,G)}&(m\text{ odd})\\
    \binom{c(m,G)}{m-1}t^{d(m-1,G)+\epsilon(G)}&(m\text{ even}.)
  \end{cases}$$
\end{thm}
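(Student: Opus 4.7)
The plan is to substitute the closed formulas from Section \ref{Poincare series} (Theorems \ref{Poincare U}, \ref{Poincare Sp}, \ref{Poincare SO(2n)} and Corollaries \ref{Poincare SU}, \ref{Poincare SO(2n+1)}) and extract the top term by analysing the Laurent expansion at $t\to\infty$.

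The essential local computation is that, although
\[
  q_k^m(t)=(-1)^{m(k-1)}t^{(m-2)k}+\frac{(1+(-1)^{k+1}t^k)^m}{1-t^{2k}}
\]
appears to have a $t^{(m-2)k}$-term at infinity, expanding $1/(1-t^{2k})=-\sum_{j\ge 1}t^{-2kj}$ shows that the leading terms of the two summands cancel exactly, so the Laurent expansion of $q_k^m(t)$ at infinity begins with $-m(-1)^{(k+1)(m-1)}t^{(m-3)k}$. Consequently, for a (possibly signed) nonempty partition $\lambda\vdash k$, the top degree of $q_\lambda^{m,n}(t)=t^{(m-2)(n-k)}\prod_iq_{\lambda_i}^m(t)$ is $(m-2)(n-k)+(m-3)k=(m-2)n-k$, which is strictly smaller than the degree $(m-2)n$ of the monomial $q_\emptyset^{m,n}(t)$.

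For $U(n)$ with $m$ odd, only $\lambda=\emptyset$ attains the maximal degree in the inner sum of Theorem \ref{Poincare U}, so combining with the leading term $(-1)^nt^{n(n+1)}$ of $\prod(1-t^{2i})$ yields the top term $t^{n^2+(m-1)n}$. For $m$ even, the sum is supported on $\lambda\vdash n$ and $\lambda\vdash n-1$, and partitions of $n-1$ give the larger top degree $(m-3)n+1$; summing leading coefficients reduces the calculation to $\sum_{\lambda\vdash n-1}m^{\ell(\lambda)}/\theta(\lambda)$, which equals $\binom{m+n-2}{m-1}$ by the standard cycle-index identity
\[
  \sum_{n\ge 0}\sum_{\lambda\vdash n}\frac{m^{\ell(\lambda)}}{\theta(\lambda)}x^n=\exp\Bigl(\sum_{k\ge 1}\frac{mx^k}{k}\Bigr)=(1-x)^{-m}.
\]
The $SU(n)$ case is immediate from the factorisation $P(\Hom(\Z^m,U(n))_1;t)=P(\Hom(\Z^m,SU(n))_1;t)(1+t)^m$ of \eqref{U-SU}.

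The same strategy treats $Sp(n)$ and $SO(2n+1)$ via Theorem \ref{Poincare Sp} and Corollary \ref{Poincare SO(2n+1)}, where the sum over signed partitions $\lambda\vdash\pm k$ weighted by $2^{-\ell(\lambda^+)}$ unfolds into the unsigned cycle-index sum above and produces the coefficient $\binom{m+n-1}{m-1}$ in the $m$-even case. The $SO(2n)$ case with $m$ even is the most delicate: Theorem \ref{Poincare SO(2n)} carries an extra $\sgn(\lambda)$-factor on the $\lambda\vdash\pm(n-1)$ summand, and the main obstacle will be to show that these signed leading contributions cancel at degree $(m-3)n+1$ (since the sum of $\sgn(\lambda)$ over signed partitions with a fixed underlying unsigned $\lambda^+$ vanishes once any part's sign is free to vary), so that the contribution from $\lambda\vdash\pm n$ at degree $(m-3)n$ emerges as the true top term, possibly corrected by second-to-top contributions from $\lambda\vdash\pm(n-1)$ with a part of absolute value one. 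The assumption $n>2$ is precisely what keeps this combinatorial analysis non-degenerate.
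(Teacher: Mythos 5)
Your approach is essentially the paper's: substitute the closed formulas, track the degree of each $q_\lambda^{m,n}(t)$ as a rational function (using the cancellation of the $t^{(m-2)|k|}$-terms in $q_k^m$, which you correctly identify), and sum the leading coefficients via the cycle-index/exponential-formula identity. For $U(n)$, $SU(n)$, $Sp(n)$, $SO(2n+1)$ your argument tracks the paper's line by line, and your identity $\sum_{\lambda\vdash k}m^{\ell(\lambda)}/\theta(\lambda)=\binom{m+k-1}{k}$ is precisely what the paper extracts from Lemmas \ref{counting} and \ref{Stirling}.

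The one place where you stop short of a proof is exactly where the paper does something more quantitative: the $SO(2n)$, $m$ even case. You note that the $\sgn(\lambda)$-weighted sum over $\lambda\vdash\pm(n-1)$ cancels at the top degree $(m-3)n+1$, and then hedge with ``possibly corrected by second-to-top contributions.'' That worry is real: showing that the sum of signs over a fixed $\lambda^+$ vanishes only kills the top degree, and a priori the $\lambda\vdash\pm(n-1)$ piece could still land at degree $(m-3)n$ and pollute the coefficient coming from $\lambda\vdash\pm n$. The paper closes this by a sharper observation: for $m$ even, $q_k^m(t)-q_{-k}^m(t)$ has degree $(m-4)|k|$ (a drop of $2|k|$, not just $|k|$), so after factoring $\sum_{\lambda\vdash\pm(n-1)}\sgn(\lambda)q_\lambda^{m,n}=t^{m-2}\sum_{\mu\vdash n-1}\tfrac{1}{2^{\ell(\mu)}\theta(\mu)}\prod_i\bigl(q_{\mu_i}^m-q_{-\mu_i}^m\bigr)$ the whole $(n-1)$-summand has degree at most $(m-4)n+2$, which is strictly below $(m-3)n$ precisely when $n>2$. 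Without this degree-$(m-4)|k|$ estimate your cancellation argument does not yet eliminate the $(n-1)$-contributions, so this step needs to be made explicit to complete the $SO(2n)$ case.
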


\begin{proof}
  We define the degree of a rational function $\frac{P(t)}{Q(t)}$ by $\deg P(t)-\deg Q(t)$, where $P(t),Q(t)$ are polynomials.

  \noindent\textbf{Unitary groups.} Suppose $m$ is odd. Since $q_\lambda^{m,n}$ is of degree $(m-2)n-k$ for $\lambda\vdash k$, it follows from Theorem \ref{Poincare U} that the Poincar\'e series of $\Hom(\Z^m,U(n))_1$ is of degree at most $n^2+(m-1)n$ such that the degree $n^2+(m-1)n$ term is included in $\frac{1}{\theta(\lambda)}q_\lambda^{m,n}(t)$ for the empty partition $\lambda$. Then its coefficient is 1, and so the top term of the Poincar\'e series is $t^{n^2+(m-1)n}$.

  Suppose $m$ is even. Then by Theorem \ref{Poincare U} the Poincar\'e series is of degree at most $n^2+(m-2)n+1$. Going back to the calculation of the Poincar\'e series, one can see that the coefficient of the degree $n^2+(m-2)n+1$ term is
  $$\frac{1}{n!}\sum_{\lambda\vdash n-1}m^{\ell(\lambda)}\sum_{w\in\Sigma_n}|\Emb(\lambda,c(w))|=\frac{1}{n!}\sum_{i=1}^{n-1}m^i\sum_{\substack{\lambda\vdash n-1\\\ell(\lambda)=i}}\sum_{w\in\Sigma_n}|\Emb(\lambda,c(w))|.$$
  By Lemma \ref{counting} for $k=1$,
  $$\sum_{\substack{\lambda\vdash n-1\\\ell(\lambda)=i}}\sum_{w\in\Sigma_n}|\Emb(\lambda,c(w))|=n{n-1\brack i}$$
  and by Lemma \ref{Stirling}, $\sum_{i=1}^{n-1}m^i{n-1\brack i}=\frac{(m+n-2)!}{(m-1)!}$. Thus the coefficient of the degree $n^2+(m-2)n+1$ term is $\binom{m+n-2}{m-1}$.

  \noindent\textbf{Special unitary groups.} This follows from \eqref{U-SU} and the case of $U(n)$ above.

  \noindent\textbf{Symplectic groups.} For $m$ odd, it is immediate from Theorem \ref{Poincare Sp} that the top term of the Poincar\'e series of $\Hom(\Z^m,Sp(n))_1$ is $t^{2n^2+mn}$. Suppose that $m$ is even. Then by Theorem \ref{Poincare Sp}, the Poincar\'e series is of degree $\le 2n^2+(m-1)n$. Going back to the calculation of the Poincar\'e series, we can see that the coefficient of the degree $2n^2+(m-1)n$ term is
  $$\frac{1}{2^nn!}\sum_{\lambda\vdash\pm n}m^{\ell(\lambda^+)}\sum_{w\in B_n}|\Emb(\lambda,c(w))|=\frac{1}{2^nn!}\sum_{i=1}^nm^i\sum_{\substack{\lambda\vdash\pm n\\\ell(\lambda^+)=i}}\sum_{w\in B_n}|\Emb(\lambda,c(w))|.$$\
  By Lemma \ref{counting signed} (1) for $k=0$,
  $$\sum_{\substack{\lambda\vdash\pm n\\\ell(\lambda^+)=i}}\sum_{w\in B_n}|\Emb(\lambda,c(w))|=2^n{n\brack i}.$$
  Thus by Lemma \ref{Stirling}, the coefficient is $\binom{m+n-1}{m-1}$.

  \noindent\textbf{Special orthogonal groups.} By Theorem \ref{cohomology SO Sp}, the $SO(2n+1)$ case follows from the $Sp(n)$ case above. Then we consider the $SO(2n)$ case. For $m$ odd, it is immediate to see that the top term is $t^{2n^2+(m-2)n}$. For a nonzero integer $k$, $q_k(t)-q_{-k}(t)$ is of degree $(m-4)k$. Then $\sum_{\lambda\vdash\pm(n-1)}\sgn(\lambda)q_\lambda^{m,n}$ is of degree at most $(m-4)n+2$. On the other hand, since $q_\lambda^{m,n}(t)$ is of degree $(m-3)k$ for $\lambda\vdash\pm k$, $\sum_{\lambda\vdash\pm n}q_\lambda^{m,n}$ is of degree at most $(m-3)n$. Then the top term is included in
  $$(1-t^{2n})\prod_{i=1}^{n-1}(1-t^{4i})\sum_{\lambda\vdash\pm n}\frac{1}{2^{\ell(\lambda^+)}\theta(\lambda^+)}q_\lambda^{m,n}$$
  because we are assuming $n>2$. Thus by the same calculation as in the $Sp(n)$ case above, one gets that the top term is $\binom{m+n-1}{m-1}t^{2n^2+(m-3)n}$.
\end{proof}

\begin{rem}\label{SO(4)}
  The top term of the Poincar\'e series of $\Hom(\Z^m,SO(2n))_1$ for $n=1,2$ is easily obtained as follows. Since $\Hom(\Z^m,SO(2))_1=(S^1)^m$, its Poincar\,e series is $(1+t)^m$. Then its top term is $t^m$. Since $SO(4)=(SU(2)\times SU(2))/(\Z/2)$, $P(\Hom(\Z^m,SO(4))_1;t)=P(\Hom(\Z^m,SU(2))_1;t)^2$ by Lemma \ref{covering}. Then the top term of the Poincar\'e series of $\Hom(\Z^m,SO(4))_1$ is $m^2t^{2m+2}$.
\end{rem}

Observe that Theorem \ref{top term classical} implies that the topology of $\Hom(\Z^m,G)_1$ depends on the parity of $m$. We give an alternative expression for this dependence. Antol\'in, Gritschacher and Villarreal (see \cite[Remark 6.5]{RS1}) observed that the Poincar\'e series of $\Hom(\Z^m,G)_1$ is palindromic for $m$ odd. On the other hand, it follows from Theorems \ref{top term classical} and Remark \ref{SO(4)} that when $G$ is the classical group which is neither trivial nor $S^1$, the coefficient of the top term of the Poincar\'e series of $\Hom(\Z^m,G)_1$ is greater than one for $m$ even. Thus we obtain the following.

\begin{cor}[Corollary \ref{palindromic intro}]\label{palindromic}
  Let $G$ be the classical group which is neither trivial nor $S^1$. Then the Poincar\'e series of $\Hom(\Z^m,G)_1$ is palindromic if and only if $m$ is odd.
\end{cor}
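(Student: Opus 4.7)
The plan is to handle the two implications separately, reducing each one to results already in the paper. The \emph{if} direction is essentially cited from the literature: Antol\'in, Gritschacher and Villarreal observed (see \cite[Remark 6.5]{RS1}) that for $m$ odd, the Poincar\'e series of $\Hom(\Z^m,G)_1$ is palindromic for any compact connected Lie group $G$, so nothing further is needed. All the work lies in the \emph{only if} direction, which I would prove by contrapositive: assuming $m$ is even, show that the Poincar\'e series cannot be palindromic.

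For the contrapositive, I would compare the constant term with the top coefficient. Since $\Hom(\Z^m,G)_1$ is defined as a path-component, it is path-connected, so $H^0(\Hom(\Z^m,G)_1;\F)\cong\F$ and the constant term of the Poincar\'e series equals $1$. A palindromic polynomial with constant term $1$ must have top coefficient equal to $1$ as well. Therefore it suffices to verify that for $m$ even, the top coefficient is strictly greater than $1$ whenever $G$ is a classical group other than the trivial group and $S^1$. This is a direct application of Theorem \ref{top term classical} (combined with Remark \ref{SO(4)} for the low-rank $SO$ cases).

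The case check is routine. For $G=U(n)$ or $SU(n)$ with $n\ge 2$ the top coefficient is $\binom{m+n-2}{m-1}=\binom{m+n-2}{n-1}\ge m+n-2\ge 2$; for $G=Sp(n)$ or $SO(2n+1)$ with $n\ge 1$ it is $\binom{m+n-1}{m-1}\ge m\ge 2$; for $G=SO(2n)$ with $n\ge 3$ it is again $\binom{m+n-1}{m-1}\ge 2$; and for $G=SO(4)$ Remark \ref{SO(4)} gives top coefficient $m^2\ge 4$. In each excluded case ($G$ trivial or $G=S^1$) the space is a point or a torus, whose Poincar\'e series $1$ or $(1+t)^m$ is trivially palindromic, which is why they must be excluded from the statement.

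I do not anticipate a substantive obstacle: once Theorem \ref{top term classical}, Remark \ref{SO(4)}, and the cited palindromicity result for odd $m$ are in hand, the proof reduces to the elementary observation that $\text{constant term} = 1 < \text{top coefficient}$ and the case-by-case verification above. The only mildly delicate point is making sure that the low-rank exceptional identifications ($SO(3)\simeq Sp(1)$, $SO(4)$) are covered, which is precisely what Remark \ref{SO(4)} and Theorem \ref{cohomology SO Sp} handle.
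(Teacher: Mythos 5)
Your proposal takes essentially the same approach as the paper: cite Antol\'in--Gritschacher--Villarreal for palindromicity when $m$ is odd, and for $m$ even invoke Theorem \ref{top term classical} together with Remark \ref{SO(4)} to see that the top coefficient exceeds $1$ while the constant term is $1$. The only difference is that you spell out the case-by-case binomial estimates and the comparison with the constant term, which the paper leaves implicit.
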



\subsection{General Lie groups}

We further determine the top term of $\Hom(\Z^2,G)_1$ for any compact connected Lie group $G$. We first observe the case of simple Lie groups, and then consider the general case.

\begin{cor}\label{top term simple}
  Let $G$ be a compact connected simple Lie group. Then the top term of the Poincar\'e series of $\Hom(\Z^2,G)_1$ is $(\mathrm{rank}\,G+1)t^{\dim G}$.
\end{cor}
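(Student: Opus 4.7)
The plan is to reduce the corollary to two disjoint cases: the classical simple groups, handled by specializing Theorem \ref{top term classical} at $m=2$, and the five exceptional groups, handled by direct symbolic evaluation of the Ramras--Stafa formula \eqref{Ramras-Stafa intro} on a computer (as described in the introduction, with details to appear in the Appendix).

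For the classical simple groups $SU(n+1)$, $Sp(n)$, $SO(2n+1)$ and $SO(2n)$ (the last for $n \ge 3$, so that $D_n$ is genuinely simple), I would simply plug $m=2$ into the "$m$ even" branch of Theorem \ref{top term classical}, which gives $\binom{c(2,G)}{1}\, t^{d(1,G) + \epsilon(G)}$, and check in each family that the coefficient equals $\mathrm{rank}\,G + 1$ and the exponent equals $\dim G$. This is just bookkeeping with the definitions of $c$, $d$ and $\epsilon$ together with the standard dimensions and ranks of each classical group; for instance, $SU(n)$ produces $n\, t^{n^2 - 1}$, while $Sp(n)$ and $SO(2n+1)$ both produce $(n+1)\, t^{2n^2 + n}$, and $SO(2n)$ produces $(n+1)\, t^{2n^2 - n}$, all matching $(\mathrm{rank}\,G + 1)\, t^{\dim G}$.

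For the five exceptional Lie groups $G_2, F_4, E_6, E_7, E_8$ the refined formulae of Section \ref{Poincare series} do not apply because their Weyl groups are not (signed) symmetric groups. The strategy here is to substitute $m=2$, the appropriate reflection representation, and the characteristic degrees recorded in the table of Section \ref{Cohomology and Weyl groups} directly into \eqref{Ramras-Stafa intro}, and to carry out the sum over $W$ symbolically with a computer algebra system; from each resulting polynomial one reads off that the top term is $(\mathrm{rank}\,G + 1)\, t^{\dim G}$.

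The main obstacle is the exceptional case: there is no uniform conceptual reason in our framework why $\mathrm{rank}\,G + 1$ should emerge as the leading coefficient, and the verification is unavoidably a case-by-case machine calculation relying on the Appendix. A representation-theoretic argument unifying the classical and exceptional families would be desirable (and would likely also clarify Theorem \ref{top term general intro}), but is not needed for the corollary itself.
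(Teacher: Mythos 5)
Your proposal is correct and takes essentially the same approach as the paper, whose one-line proof is ``Combine Theorem \ref{top term classical} and Appendix.'' The bookkeeping you carry out for the classical families (with the necessary restriction $n\ge 3$ for $SO(2n)$) and the appeal to the computer-verified Appendix for the exceptional groups is exactly what the paper intends.
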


\begin{proof}
  Combine Theorem \ref{top term classical} and Appendix.
\end{proof}

\begin{thm}[Theorem \ref{top term general intro}]\label{top term general}
  Let $G$ be a compact connected Lie group with simple factors $G_1,\ldots,G_k$. Then the top term of the Poincar\'e series of $\Hom(\Z^2,G)_1$ is
  $$(\mathrm{rank}\,G_1+1)\cdots(\mathrm{rank}\,G_k+1)t^{\dim G+\mathrm{rank}\,\pi_1(G)}.$$
\end{thm}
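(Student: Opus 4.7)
The strategy is to reduce to the known simple case via the structure theorem for compact connected Lie groups, using Lemma \ref{covering} to pass freely between isogenous groups and the product formula for $\Hom(\Z^2,-)_1$.

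Every compact connected Lie group $G$ admits a finite covering
\[
  \pi\colon\tilde G_1\times\cdots\times\tilde G_k\times T^r\longrightarrow G,
\]
where $\tilde G_i$ is the simply-connected cover of $G_i$ and $T^r$ is the identity component of the center of $G$. Since covering does not alter rank or dimension, $\mathrm{rank}\,\tilde G_i=\mathrm{rank}\,G_i$ and $\dim\tilde G_i=\dim G_i$, and moreover $r=\dim T^r=\mathrm{rank}\,\pi_1(G)$ (the free rank of the abelian group $\pi_1(G)$), together with $\dim G=\sum_{i=1}^k\dim G_i+r$.

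By Lemma \ref{covering}, the Poincar\'e series of $\Hom(\Z^2,G)_1$ coincides with that of $\Hom(\Z^2,\tilde G_1\times\cdots\times\tilde G_k\times T^r)_1$. Because the identity component of $\Hom(\Z^2,A\times B)$ is the product of the identity components of $\Hom(\Z^2,A)$ and $\Hom(\Z^2,B)$, the Poincar\'e series factors:
\[
  P(\Hom(\Z^2,G)_1;t)=\prod_{i=1}^k P(\Hom(\Z^2,\tilde G_i)_1;t)\cdot P(\Hom(\Z^2,T^r)_1;t).
\]
For the torus one has $\Hom(\Z^2,T^r)_1=(T^r)^2=T^{2r}$, whose Poincar\'e series is $(1+t)^{2r}$ with top term $t^{2r}$. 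Corollary \ref{top term simple} supplies the top term $(\mathrm{rank}\,G_i+1)t^{\dim G_i}$ for each simple factor $\tilde G_i$.

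Multiplying the top terms (the top term of a product of polynomials with positive leading coefficients is the product of the top terms), the result is
\[
  \prod_{i=1}^k(\mathrm{rank}\,G_i+1)\,t^{\sum_i\dim G_i+2r}=(\mathrm{rank}\,G_1+1)\cdots(\mathrm{rank}\,G_k+1)\,t^{\dim G+\mathrm{rank}\,\pi_1(G)},
\]
using $\sum_i\dim G_i+2r=\dim G+r$ and $r=\mathrm{rank}\,\pi_1(G)$. There is no serious obstacle here; the only point that requires a moment of care is checking that $\mathrm{rank}\,\pi_1(G)$ really equals the dimension of the central torus in the isogeny decomposition, and that invoking Lemma \ref{covering} is legitimate over $\Q$ (which it is, since we work in characteristic zero).
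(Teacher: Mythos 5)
Your proof is essentially identical to the paper's: the same isogeny decomposition $G\cong(\tilde G_1\times\cdots\times\tilde G_k\times T)/K$, the same appeal to Lemma \ref{covering} to pass to the product, the same torus contribution $t^{2\dim T}$, and the same invocation of Corollary \ref{top term simple} for each simple factor, finishing with the identical arithmetic $\sum_i\dim G_i+2\dim T=\dim G+\mathrm{rank}\,\pi_1(G)$. No meaningful difference in approach.
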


\begin{proof}
  There are compact simply-connected simple Lie groups $G_1,\ldots,G_k$ (simple factors in the statement), a torus $T$, and a discrete subgroup $K$ of the center of $G_1\times\cdots\times G_k\times T$ such that
  $$G\cong(G_1\times\cdots\times G_k\times T)/K.$$
  Then by Lemma \ref{covering},
  $$P(\Hom(\Z^2,G)_1;t)=\prod_{i=1}^kP(\Hom(\Z^2,G_i)_1;t).$$
  Since $\Hom(\Z^2,T)_1=T^2$, the top term of $P(\Hom(\Z^2,T)_1;t)$ is $t^{2\dim T}$. Then by Corollary \ref{top term simple}, the top term of $P(\Hom(\Z^2,G)_1;t)$ is
  $$(\mathrm{rank}\,G_1+1)\cdots(\mathrm{rank}\,G_k+1)t^{\dim G_1+\cdots+\dim G_k+2\dim T}.$$
  Since $\dim G=\dim G_1+\cdots+\dim G_k+\dim T$ and $\mathrm{rank}\,\pi_1(G)=\dim T$, the proof is complete.
\end{proof}

We consider an application of Theorem \ref{top term general} to rational homotopy. Let $X$ be a simply connected finite complex. Then by dichotomy in rational homotopy theory \cite{FHT}, $\sum_{n\ge 0}\pi_n(X)\otimes\Q$ is either finite or of exponential growth. In the former case, $X$ is called rationally elliptic, and the latter, rationally hyperbolic.

It is shown by Gome\'z, Pettet, and Souto \cite{GPS} that there is an isomorphism
$$\pi_1(\Hom(\Z^m,G)_1)\cong\pi_1(G)^m.$$
In particular, if $G$ is simply-connected, then so is $\Hom(\Z^m,G)_1$. On the other hand, since $\Hom(\Z^m,G)_1$ is a real algebraic variety, it is a finite complex. Then $\Hom(\Z^m,G)_1$ is either rationally elliptic or hyperbolic when $G$ is simply-connected. For $m=1$, $\Hom(\Z,G)_1=G$ which is rationally elliptic. For $m\ge 2$, one has:

\begin{cor}[Corollary \ref{hyperbolic intro}]\label{hyperbolic}
  If $G$ is a non-trivial compact simply-connected Lie group and $m\ge 2$, then $\Hom(\Z^m,G)_1$ is rationally hyperbolic.
\end{cor}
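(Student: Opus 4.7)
The plan is to combine the dichotomy of rational homotopy theory with Theorem \ref{top term general} to handle $m=2$, and then bootstrap to all $m\ge 2$ by a retraction argument.

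First I would set the stage. Since $G$ is simply-connected, the result of G\'omez--Pettet--Souto cited just before the corollary gives $\pi_1(\Hom(\Z^m,G)_1)=\pi_1(G)^m=0$, so $\Hom(\Z^m,G)_1$ is simply-connected. As noted in the paper, it is a real algebraic variety and hence a finite CW complex. Therefore the dichotomy of \cite[Part~IV]{FHT} applies: it is either rationally elliptic or rationally hyperbolic, and it suffices to rule out ellipticity.

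Next I would dispose of the base case $m=2$ using Theorem \ref{top term general}. Since $G$ is simply-connected and non-trivial, it contains no torus factor and has at least one simple factor $G_i$ with $\operatorname{rank} G_i\ge 1$, so
\[
(\operatorname{rank} G_1+1)\cdots(\operatorname{rank} G_k+1)\ge 2.
\]
Thus Theorem \ref{top term general} tells us that the top Betti number of $\Hom(\Z^2,G)_1$ (in characteristic zero) is at least $2$. Now I would invoke Halperin's theorem: a simply-connected rationally elliptic finite complex satisfies Poincar\'e duality on rational cohomology (see \cite[Part~IV]{FHT}), so its top Betti number equals the $0$-th, namely $1$. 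This contradiction forces $\Hom(\Z^2,G)_1$ to be rationally hyperbolic.

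Finally, for $m\ge 3$ I would use a retraction. The inclusion
\[
\iota\colon\Hom(\Z^2,G)_1\hookrightarrow\Hom(\Z^m,G)_1,\qquad(g_1,g_2)\mapsto(g_1,g_2,1,\ldots,1),
\]
and the projection onto the first two factors
\[
\pi\colon\Hom(\Z^m,G)_1\to\Hom(\Z^2,G)_1,\qquad(g_1,\ldots,g_m)\mapsto(g_1,g_2),
\]
are well defined (both send basepoints to basepoints, so identity components map to identity components) and satisfy $\pi\circ\iota=\mathrm{id}$. Hence $\pi_*(\Hom(\Z^2,G)_1)\otimes\Q$ is a direct summand of $\pi_*(\Hom(\Z^m,G)_1)\otimes\Q$, and exponential growth of the former (from the $m=2$ case) forces exponential growth of the latter.

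I do not expect a serious obstacle: the only non-routine ingredient is Halperin's Poincar\'e-duality theorem for rationally elliptic spaces, and the most delicate point is simply checking that the projection to the first two coordinates lands in the identity component, which follows from continuity and the basepoint condition.
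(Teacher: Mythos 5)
Your proposal is correct and matches the paper's own proof essentially step for step: the $m=2$ case is handled by combining Theorem \ref{top term general} with Poincar\'e duality for rationally elliptic spaces, and the general case $m\ge 2$ follows because $\Hom(\Z^2,G)_1$ is a retract of $\Hom(\Z^m,G)_1$. You simply spell out the retraction and the simple-connectivity input in a bit more detail than the paper does.
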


\begin{proof}
  Since $G$ is non-trivial and $m\ge 2$, the top term of the Poincar\'e series of $\Hom(\Z^2,G)_1$ has coefficient greater than one by Theorem \ref{top term general}. On the other hand, the rational cohomology of a rationally elliptic space satisfies the Poincar\'e duality. In particular, the top term of the Poincar\'e series has coefficient one. Thus $\Hom(\Z^2,G)_1$ must be rationally hyperbolic. By definition, if $X$ is a retract of $Y$ and is rationally hyperbolic, then $Y$ is rationally hyperbolic too. Since $\Hom(\Z^2,G)_1$ is a retract of $\Hom(\Z^m,G)_1$ for $m\ge 2$, the proof is complete.
\end{proof}


\section{Cohomology generators}\label{Cohomology generators}

In this section we give a minimal generating set of the cohomology of $\Hom(\Z^m,G)_1$ for the classical group $G$, except for $SO(2n)$. We set notation that are used throughout this section. Let $G$ be a compact connected Lie group of rank $n$, and let $W$ denote its Weyl group. We set
$$\widetilde{\H}(m,G)\coloneqq\P(n)\otimes\bigotimes_{i=1}^m\Lambda(y_1^i,\ldots,y_n^i)\quad\text{and}\quad\H(m,G)\coloneqq\P(n)_W\otimes\bigotimes_{i=1}^m\Lambda(y_1^i,\ldots,y_n^i)$$
where $\P(n)$ is as in Section \ref{Cohomology and Weyl groups}. By Theorem \ref{Baird},
$$H^*(\Hom(\Z^m,G)_1;\F)\cong\H(m,G)^W$$
where the ground field $\F$ is of characteristic zero or prime to $|W|$. Then we understand the results on $\H(m,G)^W$ below as those on $H^*(\Hom(\Z^m,G)_1;\F)$.


\subsection{Generators for unitary groups}

We assume that the ground field $\F$ is of characteristic prime to $n!$. We give generators of $\H(m,U(n))^{\Sigma_n}$. Let $e_i$ denote the $i$-th symmetric polynomial in $\P(n)$ for $1\le i\le n$. Then
$$\P(n)^{\Sigma_n}=\F[e_1,\ldots,e_n].$$
Let $p_i$ denote the $i$-th power sum in $\P(n)$ for $i\ge 1$, i.e. $p_i=x_1^i+\cdots+x_n^i$. By the Newton formula
\begin{equation}
  \label{Newton formula}
  ke_k=\sum_{i=0}^k(-1)^{i-1}e_{k-i}p_i
\end{equation}
where we set $e_0=p_0=1$. Then since the ground field $\F$ is of characteristic zero or prime to $n!$, $\P(n)^{\Sigma_n}$ is alternatively expressed as
\begin{equation}\label{power sum}
  \P(n)^{\Sigma_n}=\F[p_1,\ldots,p_n].
\end{equation}
This simple observation is the core idea to construct generators of $\H(m,U(n))^{\Sigma_n}$.

Let $q_i\coloneqq x_1^iy_1+\cdots+x^i_ny_n\in\P(n)\otimes\Lambda(y_1,\ldots,y_n)$ for $i\ge 0$. We observe a property of these element. For non-negative integers $k_1,\ldots,k_n$, let
$$q_{k_1,\ldots,k_n}\coloneqq\sum_{\sigma\in\Sigma_n}\sgn(\sigma)x_{\sigma(1)}^{k_1}\cdots x_{\sigma(n)}^{k_n}.$$
Then $q_{k_1,\ldots,k_n}$ is anti-symmetric. Note that $q_{k_1,\ldots,k_n}$ can be trivial for some $k_1,\ldots,k_n$. For instance,
\begin{align*}
  q_{1,0,0}&=\sgn(123)x_1+\sgn(213)x_2+\sgn(321)x_3\\
  &\quad+\sgn(132)x_1+\sgn(231)x_2+\sgn(312)x_3\\
  &=0
\end{align*}
where $n=3$. Clearly, each anti-symmetric polynomial in $\P(n)$ is a linear combination of $q_{k_1,\ldots,k_n}$. By a straightforward calculation, we can prove the following.

\begin{lem}\label{anti-symmetric}
  For non-negative integers $k_1,\ldots,k_n$,
  $$q_{k_1}\cdots q_{k_n}=q_{k_1,\ldots,k_n}y_1\cdots y_n.$$
  In particular, every anti-symmetric polynomial in $\P(n)$ is generated by $q_i$.
\end{lem}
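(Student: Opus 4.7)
The plan is to unwind the definitions directly: the identity $q_{k_1}\cdots q_{k_n}=q_{k_1,\ldots,k_n}\,y_1\cdots y_n$ should fall out of expanding the product in $\P(n)\otimes\Lambda(y_1,\ldots,y_n)$ and using the exterior relations $y_i^2=0$ and $y_iy_j=-y_jy_i$. More precisely, I would begin by writing
\[
  q_{k_1}\cdots q_{k_n}=\prod_{j=1}^{n}\Big(\sum_{i_j=1}^{n}x_{i_j}^{k_j}y_{i_j}\Big)=\sum_{(i_1,\ldots,i_n)\in[n]^n}x_{i_1}^{k_1}\cdots x_{i_n}^{k_n}\,y_{i_1}\cdots y_{i_n}.
\]
Because $y_iy_i=0$, all summands with a repeated index vanish, so only tuples $(i_1,\ldots,i_n)$ that are permutations $\sigma=(\sigma(1),\ldots,\sigma(n))\in\Sigma_n$ contribute.

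Next I would rearrange the exterior factors: using anticommutativity, $y_{\sigma(1)}\cdots y_{\sigma(n)}=\sgn(\sigma)\,y_1\cdots y_n$. Substituting this back gives
\[
  q_{k_1}\cdots q_{k_n}=\Big(\sum_{\sigma\in\Sigma_n}\sgn(\sigma)\,x_{\sigma(1)}^{k_1}\cdots x_{\sigma(n)}^{k_n}\Big)\,y_1\cdots y_n=q_{k_1,\ldots,k_n}\,y_1\cdots y_n,
\]
which is the main identity.

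For the ``in particular'' assertion, I would use the remark made just before the lemma that every anti-symmetric polynomial in $\P(n)$ is a linear combination of the $q_{k_1,\ldots,k_n}$'s. The identity just proved then realizes each such $q_{k_1,\ldots,k_n}$, multiplied by $y_1\cdots y_n$, as a monomial in the $q_i$'s, so every anti-symmetric element of $\P(n)$ lies in the subalgebra generated by $\{q_i\}_{i\ge 0}$ after pairing with the top form $y_1\cdots y_n$.

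I do not expect any serious obstacle here: the content is really just bookkeeping of signs coming from the exterior algebra, and the only thing to be careful about is that the surviving summands are precisely indexed by $\Sigma_n$ and that the sign produced by reordering $y_{\sigma(1)}\cdots y_{\sigma(n)}$ exactly matches $\sgn(\sigma)$ in the definition of $q_{k_1,\ldots,k_n}$.
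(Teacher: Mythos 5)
Your argument is correct and is exactly the ``straightforward calculation'' the paper has in mind (the paper gives no further details). Expanding the product, observing that $y_i^2=0$ kills all tuples with repeats, and collecting the sign $\sgn(\sigma)$ from reordering $y_{\sigma(1)}\cdots y_{\sigma(n)}$ into $y_1\cdots y_n$ is precisely the intended route; the ``in particular'' clause follows as you say from the observation, made just before the lemma, that the $q_{k_1,\ldots,k_n}$ span the anti-symmetric polynomials.
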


Let $[m]\coloneqq\{1,2,\ldots,m\}$. We consider an ordering on the power set $2^{[m]}$ which sorts by cardinality first and by the lexicographic ordering with $1<2<\cdots<m$ next.

\begin{exam}\label{example 2^3}
  The power set $2^{[3]}$ is ordered as
  $$\emptyset<\{1\}<\{2\}<\{3\}<\{1,2\}<\{1,3\}<\{2,3\}<\{1,2,3\}.$$
\end{exam}

We give a basis of $\widetilde{\H}(m,U(n))^{\Sigma_n}$. For $I=\{i_1<\cdots<i_k\}\subset[m]$, let
$$y_j^I\coloneqq y_j^{i_1}\cdots y_j^{i_k}$$
and for $d_1,\ldots,d_{2^m-1}\ge 0$, let
$$z(d_1,\ldots,d_{2^m-1})\coloneqq\prod_{k=1}^{2^m-1}\prod_{i=d_1+\cdots+d_{k-1}+1}^{d_1+\cdots+d_k}y^{S_k}_i$$
where $S_k$ is in $2^{[m]}=\{\emptyset<S_1<\cdots<S_{2^m-1}\}$ by the above ordering.

\begin{exam}\label{M}
  For $m=3$, $z(d_1,\ldots,d_7)$ of degree $2$ are:
  \begin{alignat*}{3}
    z(0,1,1,0,0,0,0)&=y_1^2y_2^3\quad&z(1,0,1,0,0,0,0)&=y_1^1y_2^3\quad&z(1,1,0,0,0,0,0)&=y_1^1y_2^2\\
    z(0,0,0,0,0,1,0)&=y_1^2y_1^3&z(0,0,0,0,1,0,0)&=y_1^1y_1^3&z(0,0,0,1,0,0,0)&=y_1^1y_1^2\\
    z(0,0,2,0,0,0,0)&=y_1^3y_2^3&z(0,2,0,0,0,0,0)&=y_1^2y_2^2&z(2,0,0,0,0,0,0)&=y_1^1y_2^1
  \end{alignat*}
\end{exam}

Let $\mathcal{M}(m,n)$ be a subset of $\widetilde{\H}(m,U(n))$ consisting of monic monomials of the form
$$M(x_1,\ldots,x_n)z(d_1,\ldots,d_{2^m-1}).$$
Clearly, for every monic monomial $w$ of $\widetilde{\H}(m,U(n))$, there is a unique $v\in\mathcal{M}(m,n)$ such that
\begin{equation}\label{unique monomial}
  w=\sigma(v)
\end{equation}
for some $\sigma\in\Sigma_n$. Since the ground field $\F$ is of characteristic zero or prime to $n!$, we get the following.

\begin{lem}\label{basis H}
  The set
  $$\left\{\sum_{\sigma\in\Sigma_n}\sigma(v)\;\middle|\; v\in\mathcal{M}(m,n)\right\}$$
  is a basis of $\widetilde{\H}(m,U(n))^{\Sigma_n}$.
\end{lem}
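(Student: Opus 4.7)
The plan is to realize $\mathcal{M}(m,n)$ as a transversal for the $\Sigma_n$-orbits on the monic-monomial basis of $\widetilde{\H}(m,U(n))$ and then average using the Reynolds operator $R=\frac{1}{n!}\sum_{\sigma\in\Sigma_n}\sigma$, which is a projection onto $\widetilde{\H}(m,U(n))^{\Sigma_n}$ because $\mathrm{char}\,\F$ is coprime to $n!$. The hypotheses of the lemma are tailored precisely to make $R$ available and to make $\mathcal{M}(m,n)$ a set of orbit representatives.

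First, I would observe that the monic monomials $M(x_1,\ldots,x_n)\prod_{j=1}^n y_j^{I_j}$ (with $M$ monic in the $x_j$'s, $I_j\subseteq[m]$, inner product $y_j^{I_j}=y_j^{i_1}\cdots y_j^{i_k}$ in ascending $i$, outer product in ascending $j$) form an $\F$-basis of $\widetilde{\H}(m,U(n))$, and that $\Sigma_n$ permutes them up to signs coming from reordering the exterior generators. The uniqueness clause in \eqref{unique monomial} then says that $\mathcal{M}(m,n)$ meets each $\Sigma_n$-orbit of monic monomials in exactly one element. Next, any invariant $f$ satisfies $f=R(f)$, so expanding $f$ in the monomial basis and using $R(\sigma(w))=R(w)$ reduces spanning to the claim that $\{R(v):v\in\mathcal{M}(m,n)\}$ span the invariants; rescaling by $n!$ gives the desired spanning set $\{\sum_\sigma\sigma(v):v\in\mathcal{M}(m,n)\}$. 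For linear independence, the monomial support of $\sum_\sigma\sigma(v)$ lies entirely in the $\Sigma_n$-orbit of $v$, and distinct $v\in\mathcal{M}(m,n)$ produce disjoint orbits, so the nonzero orbit sums have pairwise disjoint supports and are therefore linearly independent.

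The main obstacle I expect is the sign coming from the graded exterior structure: for some $v\in\mathcal{M}(m,n)$---for instance when two distinct indices $j,j'\le n$ carry the same label $S_k$ of odd cardinality---there is a transposition $\tau\in\Sigma_n$ with $\tau(v)=-v$, forcing $\sum_{\sigma}\sigma(v)=0$. To make the statement precise one must either exclude such $v$ from $\mathcal{M}(m,n)$ or interpret ``basis'' as referring to the nonzero orbit sums; either way, disentangling the sign combinatorics from the transversality clause \eqref{unique monomial} is the delicate point, while every other step is a formal consequence of standard representation-theoretic averaging.
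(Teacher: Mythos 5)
Your overall approach (Reynolds averaging, transversality of $\mathcal{M}(m,n)$, disjoint supports) is exactly what the paper relies on; the paper gives no formal argument beyond the two-sentence remark surrounding \eqref{unique monomial}, and your spanning and linear-independence steps fill that in correctly. The sign concern you raise is real and well-spotted: the lemma is imprecise as literally stated, since the set in question can contain zero. A concrete instance already occurs for $m=1$, $n=2$: with $v=y_1y_2\in\mathcal{M}(1,2)$, the transposition $(1\,2)$ gives $(1\,2)(v)=-v$, so $\sum_{\sigma\in\Sigma_2}\sigma(v)=0$.

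However, your proposed remedy (``interpret `basis' as referring to the nonzero orbit sums'') does not fully repair the statement, because the set contains \emph{nonzero linear dependencies} as well as zeros. Distinct $v_1,v_2\in\mathcal{M}(m,n)$ can satisfy $\sigma_0(v_1)=-v_2$ for some $\sigma_0\in\Sigma_n$, in which case $\sum_{\sigma}\sigma(v_1)=-\sum_{\sigma}\sigma(v_2)$ are nonzero but proportional. Again with $m=1$, $n=2$: both $x_1y_1y_2$ and $x_2y_1y_2$ lie in $\mathcal{M}(1,2)$, and
$\sum_{\sigma}\sigma(x_1y_1y_2)=(x_1-x_2)y_1y_2=-\sum_{\sigma}\sigma(x_2y_1y_2)$.
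So besides discarding zeros one must choose a single representative from each $\pm$-pair. (Relatedly, the uniqueness clause \eqref{unique monomial} does \emph{not} make $\mathcal{M}(m,n)$ a transversal of the $\Sigma_n$-orbits up to sign; both $x_1y_1y_2$ and $x_2y_1y_2$ above lie in $\mathcal{M}$ yet belong to the same signed orbit.) One last small inaccuracy: your criterion ``two distinct indices $j,j'$ carry the same odd-cardinality label $S_k$'' guarantees a $\tau$ with $\tau(v)=-v$ only if the polynomial factor $M$ is also fixed by $\tau$; otherwise $\tau(v)$ is a different monomial and the orbit sum need not vanish. None of these adjustments affect the spanning claim or the disjoint-support argument, and since the subsequent results (Lemma~\ref{monomial}, Lemma~\ref{least term}, etc.) only use spanning together with the least-term bookkeeping, the paper's downstream arguments are unaffected.
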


Now we are ready to give a basis of $\H(m,U(n))^{\Sigma_n}$.

\begin{lem}\label{monomial}
  $\H(m,U(n))^{\Sigma_n}$ is spanned by elements represented by
  $$\sum_{\sigma\in\Sigma_n}\sigma(M(x_1,\ldots,x_n)z(d_1,\ldots,d_{2^m-1}))\in\widetilde{\H}(m,U(n))^{\Sigma_n}$$
  where $M(x_1,\ldots,x_n)$ are monomials, satisfying the following conditions:
  \begin{enumerate}
    \item $M(x_1,\ldots,x_n)=M(x_1,\ldots,x_r,0,\ldots,0)$ for $r=d_1+\cdots+d_{2^m-1}$;
    \item if the above representative includes the term
    $$P(x_{d_1+\cdots+d_{i-1}+1},\ldots,x_{d_1+\cdots+d_{i-1}+d_i})Q$$
    where $Q$ does not include  $x_{d_1+\cdots+d_{i-1}+1},\ldots,x_{d_1+\cdots+d_{i-1}+d_i}$, then $P$ is symmetric for $|S_i|$ even and anti-symmetric for $|S_i|$ odd.
  \end{enumerate}
\end{lem}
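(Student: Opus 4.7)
The plan is to combine the basis of $\widetilde{\H}(m,U(n))^{\Sigma_n}$ supplied by Lemma \ref{basis H} with the identification
\[
\H(m,U(n))^{\Sigma_n} = \widetilde{\H}(m,U(n))^{\Sigma_n}/(p_1,\ldots,p_n)\widetilde{\H}(m,U(n))^{\Sigma_n},
\]
which follows from the Shephard--Todd decomposition $\P(n)\cong\P(n)^{\Sigma_n}\otimes\P(n)_{\Sigma_n}$ of $\Sigma_n$-modules used in the proof of Theorem \ref{Ramras-Stafa} together with the presentation \eqref{power sum}. It therefore suffices to take a generator $v=M(x_1,\ldots,x_n)\cdot z(d_1,\ldots,d_{2^m-1})\in\mathcal{M}(m,n)$ and rewrite $\sum_\sigma\sigma(Mz)$ modulo this ideal so that the new monomial satisfies $(1)$; condition $(2)$ will then come for free from the structure of the symmetrization.

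For condition $(1)$, set $r=d_1+\cdots+d_{2^m-1}$, factor $M=M_{\text{tied}}(x_1,\ldots,x_r)\cdot M_\infty(x_{r+1},\ldots,x_n)$, and let $f$ be the number of variables among $x_{r+1},\ldots,x_n$ that appear in $M_\infty$. I would induct on $f$. For the inductive step, choose a free variable occurring in $M$, say $x_n$ with exponent $c\ge 1$, and write $M=x_n^c M''$, so that $M''$ uses only $f-1$ free variables. Since $p_c$ lies in the augmentation ideal $(p_1,\ldots,p_n)$ of $\P(n)^{\Sigma_n}$, we have
\[
0\equiv p_c\cdot\sum_\sigma\sigma(M''z)=\sum_{j=1}^n\sum_\sigma\sigma(x_j^c M''z)\pmod{(p_1,\ldots,p_n)}.
\]
Let $J$ be the set of $j\in\{r+1,\ldots,n\}$ such that $x_j$ does not appear in $M''$. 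Any transposition of two elements of $J$ fixes both $z$ (whose $y$-factors have indices in $[r]$) and $M''$, so $\sum_\sigma\sigma(x_j^c M''z)$ is independent of $j\in J$. Since $|J|=n-r-(f-1)$ and $n\in J$, isolating the $j=n$ contribution yields
\[
(n-r-f+1)\sum_\sigma\sigma(x_n^c M''z)\equiv -\sum_{j\notin J}\sum_\sigma\sigma(x_j^c M''z)\pmod{(p_1,\ldots,p_n)},
\]
and every summand on the right uses at most $f-1$ free variables, closing the induction.

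Condition $(2)$ then follows automatically from the block decomposition. Writing $\Sigma_n$ in terms of the block stabilizer $H=\Sigma_{d_1}\times\cdots\times\Sigma_{d_{2^m-1}}\times\Sigma_{n-r}$, one has $\sum_{\sigma\in\Sigma_n}\sigma(Mz)=\sum_{\bar\sigma\in\Sigma_n/H}\bar\sigma(\sum_{\tau\in H}\tau(Mz))$. Within block $i$ the factor $P_i=y_{d_1+\cdots+d_{i-1}+1}^{S_i}\cdots y_{d_1+\cdots+d_{i-1}+d_i}^{S_i}$ is a product of $d_i$ exterior elements each of degree $|S_i|$, so graded commutativity gives $\tau_i(P_i)=\sgn(\tau_i)^{|S_i|}P_i$ for every $\tau_i\in\Sigma_{d_i}$. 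Hence the block-$i$ contribution to the inner sum factors as $P_i\cdot\tilde M_i$, where $\tilde M_i=\sum_{\tau_i\in\Sigma_{d_i}}\sgn(\tau_i)^{|S_i|}\tau_i(M_i)$ is symmetric when $|S_i|$ is even and antisymmetric when $|S_i|$ is odd; this is precisely the polynomial $P$ of condition $(2)$.

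The main obstacle is the careful bookkeeping in the inductive step for $(1)$: one must verify that the isolated coefficient $n-r-f+1$ is strictly positive (guaranteed by $f\le n-r$) and invertible in $\F$ (which holds since $1\le n-r-f+1\le n$ and $\F$ has characteristic zero or prime to $n!$), and that each remaining summand strictly decreases $f$. Passing between arbitrary monomial representatives and those in $\mathcal{M}(m,n)$ is harmless throughout, since orbit sums depend only on the underlying $\Sigma_n$-orbit.
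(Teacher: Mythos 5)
Your proposal is correct, and it relies on the same underlying facts as the paper (Lemma \ref{basis H}, the identification $\H(m,U(n))^{\Sigma_n}=\widetilde{\H}(m,U(n))^{\Sigma_n}/(p_1,\ldots,p_n)$, and the vanishing of power sums in the coinvariant ring). The main methodological difference is in how condition (1) is obtained. The paper first (implicitly) replaces $M$ by its $\Sigma_{n-r}$-symmetrization over the free variables $x_{r+1},\ldots,x_n$ so that the tail part becomes a polynomial in the power sums $p_i(x_{r+1},\ldots,x_n)$, and then substitutes $p_i(x_{r+1},\ldots,x_n)\equiv-p_i(x_1,\ldots,x_r)$ in one step. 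You instead run an explicit induction on the number $f$ of free variables actually appearing in $M$, using $p_c\equiv 0$ to transfer one power $x_n^c$ at a time to the remaining variables; the transposition argument plus the invertibility of $|J|=n-r-f+1$ (which needs $\mathrm{char}\,\F\nmid n!$) is exactly what closes the induction. Your version is a bit more elementary and sidesteps the delicate phrase in the paper that ``$M$ is invariant under permutations on $\{r+1,\ldots,n\}$'' — which is literally false for a single monomial and has to be read as a statement about the orbit average — at the cost of being longer. The argument for condition (2), via the block stabilizer $H=\Sigma_{d_1}\times\cdots\times\Sigma_{d_{2^m-1}}\times\Sigma_{n-r}$ and the sign $\sgn(\tau_i)^{|S_i|}$ on the exterior part, is the same as the paper's.
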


\begin{proof}
  (1) As in the proof of Theorem \ref{Ramras-Stafa}, the natural map
  $$\widetilde{\H}(m,U(n))^{\Sigma_n}\to\H(m,U(n))^{\Sigma_n}$$
  is surjective. Then by Lemma \ref{basis H}, each element of $\H(m,U(n))^{\Sigma_n}$ is a linear combination of elements of $\widetilde{\H}(m,U(n))^{\Sigma_n}$ represented by elements of Lemma \ref{basis H}. By definition, $M(x_1,\ldots,x_n)$ is invariant under permutations on $\{r+1,r+2,\ldots,n\}$. Then by \eqref{power sum}
  $$M(x_1,\ldots,x_n)=\sum_{i\ge 0}M'_i(x_1,\ldots,x_r)p_i(x_{r+1},\ldots,x_n)$$
  where $M'_i$ are monomials and $p_i$ denotes the $i$-th power sum as above. Since
  $$p_i(x_{r+1},\ldots,x_n)\equiv -p_i(x_1,\ldots,x_r)\quad\text{in}\quad\P(n)_{\Sigma_n}$$
  by \eqref{power sum}, we can choose the monomials $M(x_1,\ldots,x_n)$ as stated.

  \noindent(2) By definition, the representative in the statement is invariant under permutations on $\{d_1+\cdots+d_{i-1}+1,d_1+\cdots+d_{i-1}+2\ldots,d_1+\cdots+d_{i-1}+d_i\}$, so that $z(d_1,\ldots,d_{2^m-1})$ is invariant for $|S_i|$ even and anti-invariant for $|S_i|$ odd. Then the second statement follows.
\end{proof}

We define an ordering on $\mathcal{M}(m,n)$. Note that those elements are bigraded by the degrees in the polynomial and the exterior algebra part.
\begin{enumerate}
  \item We sort by total degree.

  \item After (1), we sort by degree with respect to the exterior algebra part.

  \item After (2), $M(x_1,\ldots,x_n)z(d_1,\ldots,d_{2^m-1})>M'(x_1,\ldots,x_n)z(d_1',\ldots,d_{2^m-1}')$ whenever $d_1+\cdots+d_{2^m-1}<d_1'+\cdots+d_{2^m-1}'$.

  \item After (3), $M(x_1,\ldots,x_n)z(d_1,\ldots,d_{2^m-1})>M'(x_1,\ldots,x_n)z(d_1',\ldots,d_{2^m-1}')$ whenever $d_i=d_i'$ for $i<k$ and $d_k>d_k'$ for some $k$.

  \item Finally, we sort by the lexicographic ordering on $\P(n)$ with $x_1<\cdots<x_n$.
\end{enumerate}

Given any $w\in\widetilde{\H}(m,U(n))^{\Sigma_n}$, it follows from Lemma \ref{basis H} that
$$w=\sum_{v\in\mathcal{M}(m,n)}a_v\sum_{\sigma\in\Sigma_n}\sigma(v)$$
where $a_v\in\F$. We define the least term of $w$ to be the least $v$ such that $a_v\ne 0$, which is well defined by the observation on \eqref{unique monomial}.

\begin{exam}
  By Examples \ref{example 2^3} and \ref{M}, elements of $\mathcal{M}(3,2)$ of degree 2 are ordered as
  $$y_1^3y_2^3<y_1^2y_2^3<y_1^2y_2^2<y_1^1y_2^3<y_1^1y_2^2<y_1^1y_2^1<y_1^2y_1^3<y_1^1y_1^3<y_1^1y_1^2<x_1<x_2.$$
\end{exam}

By analogy to the power sums in $\P(n)$, we define for $\emptyset\ne I\subset[m]$,
$$z(d,I)\coloneqq x^{d-1}_1y_1^I+\cdots+x_n^{d-1}y_n^I.$$
Clearly, $z(d,I)\in\H(m,U(n))^{\Sigma_n}$. Clearly, if $k>n$, then for any $d_1,\ldots,d_k\ge 1$ and $\emptyset\ne I_1,\ldots,I_k\subset[m]$,
$$z(d_1,I_1)\cdots z(d_k,I_k)=0.$$

\begin{lem}\label{least term}
  The least term of $z(d_1,I_1)\cdots z(d_k,I_k)$ for $I_1\leq\cdots\leq I_k$ and $k\le n$ is
  \[
    x_1^{d_1-1}\cdots x_k^{d_k-1}y_1^{I_1}\cdots y_k^{I_k}.
  \]
\end{lem}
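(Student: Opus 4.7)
The plan is to expand $z(d_1,I_1)\cdots z(d_k,I_k)$ as a sum of monomials indexed by tuples $(j_1,\ldots,j_k)\in[n]^k$ and, for each summand, identify the element of $\mathcal{M}(m,n)$ whose orbit sum it contributes to. The assertion will then follow from ordering rule (3), which favours shapes $(d_1',\ldots,d_{2^m-1}')$ with the largest possible $\sum_i d_i'$.

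First I would write
\[
  z(d_1,I_1)\cdots z(d_k,I_k)=\sum_{(j_1,\ldots,j_k)\in[n]^k}x_{j_1}^{d_1-1}\cdots x_{j_k}^{d_k-1}\,y_{j_1}^{I_1}\cdots y_{j_k}^{I_k}
\]
and split the sum into Case 1 (the $j_a$'s are pairwise distinct) and Case 2 ($j_a=j_b$ for some $a\ne b$). In Case 1, set $d_{S_i}^*\coloneqq|\{a\colon I_a=S_i\}|$; the hypothesis $I_1\le\cdots\le I_k$ then makes $y_1^{I_1}\cdots y_k^{I_k}=z(d_1^*,\ldots,d_{2^m-1}^*)$ on the nose, so
\[
  v_0\coloneqq x_1^{d_1-1}\cdots x_k^{d_k-1}y_1^{I_1}\cdots y_k^{I_k}\in\mathcal{M}(m,n).
\]
For each distinct tuple $(j_1,\ldots,j_k)$ and any $\sigma\in\Sigma_n$ with $\sigma(a)=j_a$ for $a\in[k]$, the corresponding summand is literally $\sigma(v_0)$, because $\sigma$ acts as an algebra homomorphism by relabelling indices and no sign is produced. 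Summing over the $(n-k)!$ extensions of each tuple, the Case 1 part equals $\frac{1}{(n-k)!}\sum_{\sigma\in\Sigma_n}\sigma(v_0)$, which is a nonzero scalar multiple of the orbit sum of $v_0$ since $\mathrm{char}\,\F$ is zero or prime to $n!$.

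Next, in Case 2, let $r\coloneqq|\{j_1,\ldots,j_k\}|<k$; the summand uses only $r$ distinct $y$-indices, so after relabelling to canonical form its shape $(d_1',\ldots,d_{2^m-1}')$ satisfies $\sum_i d_i'=r<k=\sum_i d_i^*$. By ordering rule (3), every such basis element is strictly larger than $v_0$, so Case 2 contributions cannot affect any basis element $\le v_0$ regardless of possible sign cancellations among them. Combining the two cases identifies $v_0$ as the least basis element appearing with nonzero coefficient, which is the claim.

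The bookkeeping point that requires care is ensuring that the pre-sorted condition $I_1\le\cdots\le I_k$ really does make $v_0$ the canonical $\mathcal{M}(m,n)$-representative of its $\Sigma_n$-orbit; this is exactly what allows the distinct-tuple terms in Case 1 to collapse into the single orbit sum $\sum_\sigma\sigma(v_0)$ rather than splitting across several orbits whose contributions might partially cancel.
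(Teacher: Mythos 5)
Your proof is correct and is essentially the same as the paper's. The paper observes that the exterior algebra part of any monomial in the product is a merge $y_{i_1}^{J_1}\cdots y_{i_l}^{J_l}$ of the $I_a$'s into disjoint unions, and concludes via ordering rule (3) that the least exterior part arises when no merging occurs; your Case~1/Case~2 split (distinct vs.\ repeated $y$-indices) is exactly this merging dichotomy, with the added bonus that you explicitly verify the nonzero coefficient of $v_0$ via the orbit-sum count, a point the paper leaves implicit.
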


\begin{proof}
  The exterior algebra part of any monomial in $z\coloneqq z(d_1,I_1)\cdots z(d_k,I_k)$ is
  $$y_{i_1}^{J_1}\cdots y_{i_l}^{J_l}$$
  where $i_1<\cdots<i_l$ and $J_i=I_{j_1}\cup\cdots\cup I_{j_{k_i}}$ for disjoint $I_{j_1},\ldots,I_{j_{k_i}}$ such that distinct $J_i$ include no common $I_j$. Then by the condition (3) in the definition of the ordering on $\mathcal{M}$, the exterior algebra part of the least term is $y_1^{I_1}\cdots y_k^{I_k}$. Thus the least term of $z(d_1,I_1)\cdots z(d_k,I_k)$ is $x_1^{d_1-1}\cdots x_k^{d_k-1}y_1^{I_1}\cdots y_k^{I_k}$, as stated.
\end{proof}

We set
$$\overline{\S}(m,U(n))\coloneqq \{z(d,I)\mid d\ge 1\text{ and }\emptyset\ne I\subset[m]\}.$$
If $A$ is a subset of $\widetilde{\H}(m,U(n))$, then we will use the same symbol $A$ for the image of $A$ under the projection $\widetilde{\H}(m,U(n))\to\H(m,U(n))$.

\begin{thm}\label{generator U not minimal}
  $\H(m,U(n))^{\Sigma_n}$ is generated by $\overline{\S}(m,U(n))$.
\end{thm}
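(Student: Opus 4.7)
The plan is to argue by induction on the well-ordering that $\mathcal{M}(m,n)$ of fixed total degree inherits from the ordering introduced above, reducing any $w\in\H(m,U(n))^{\Sigma_n}$ to a polynomial in $\overline{\S}(m,U(n))$ by iteratively killing its least term.

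First, I lift $w$ to a canonical representative $\tilde w\in\widetilde{\H}(m,U(n))^{\Sigma_n}$ via Lemma~\ref{monomial}. By Lemma~\ref{basis H}, $\tilde w$ expands uniquely as $\sum_{v\in\mathcal{M}(m,n)}a_v\sum_{\sigma\in\Sigma_n}\sigma(v)$, so it has a well-defined least term
\[
v_0=M(x_1,\ldots,x_n)\cdot z(d_1,\ldots,d_{2^m-1}).
\]
Condition~(1) of Lemma~\ref{monomial} forces $M$ to be a monomial supported on $x_1,\ldots,x_r$ only, where $r=d_1+\cdots+d_{2^m-1}$. Writing $M=x_1^{a_1}\cdots x_r^{a_r}$ with $a_j\ge 0$, I define $I_j:=S_{k(j)}$ for each $j\in\{1,\ldots,r\}$, where $k(j)$ is the unique index with $d_1+\cdots+d_{k(j)-1}<j\le d_1+\cdots+d_{k(j)}$; the ordering $S_1<S_2<\cdots$ on the blocks ensures $I_1\le I_2\le\cdots\le I_r$.

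Next, by Lemma~\ref{least term}, the product
\[
P\;:=\;z(a_1+1,I_1)\,z(a_2+1,I_2)\cdots z(a_r+1,I_r)
\]
has least term precisely $v_0$, so for a suitable scalar $c\in\F$ matching the coefficient of $v_0$ in $\tilde w$, the difference $\tilde w-cP$ has least term strictly greater than $v_0$. Because $\tilde w$ is homogeneous and $\mathcal{M}(m,n)$ has only finitely many elements in each total degree, the descent terminates in finitely many steps and exhibits $w$ as a polynomial in $\overline{\S}(m,U(n))$.

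The delicate point is guaranteeing that the least term really takes the shape $v_0$ above, with $M$ a pure monomial rather than a genuinely symmetric or anti-symmetric polynomial within each block, so that the matching with Lemma~\ref{least term} is exact. This is exactly the role of Lemma~\ref{monomial}: the Newton relation \eqref{Newton formula} (equivalently \eqref{power sum}) lets one eliminate symmetric polynomials in $x_{r+1},\ldots,x_n$, while Lemma~\ref{anti-symmetric} lets one absorb anti-symmetric behavior in each odd-$|S_k|$ block into the symmetrizing sum $\sum_{\sigma\in\Sigma_n}\sigma(\cdot)$. One also has to check that after subtracting $cP$, re-expressing the result in the canonical form of Lemma~\ref{monomial} does not reintroduce any term $\le v_0$; this follows because Newton's identity replaces power sums in $x_{r+1},\ldots,x_n$ by power sums in $x_1,\ldots,x_r$ of the \emph{same} total degree, which stay above $v_0$ in the ordering. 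Once these bookkeeping points are settled, the Gr\"obner-style descent driven by Lemma~\ref{least term} concludes the proof.
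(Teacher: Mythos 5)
Your proof follows essentially the same route as the paper: lift to a canonical representative via Lemma~\ref{monomial}, read off its least term in the basis of Lemma~\ref{basis H}, match that least term with the product $z(a_1+1,I_1)\cdots z(a_r+1,I_r)$ using Lemma~\ref{least term}, and iterate. The paper's proof is phrased more tersely (``the least term of $\widetilde{w}$ is included in some element $v\in R$ \ldots\ by eliminating the least terms inductively''), citing \eqref{power sum} and Lemma~\ref{anti-symmetric} in the same breath, but the mechanism is identical; your explicit construction of $P$ and the remarks on why the rewriting to the form of Lemma~\ref{monomial} does not drop the least term make explicit what the paper leaves implicit, without changing the argument.
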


\begin{proof}
  Let $R$ denote the subring of $\H(m,U(n))$ generated by $\overline{\S}(m,U(n))$. Take any element $w\in\H(m,U(n))^{\Sigma_n}$ represented by an element $\widetilde{w}\in\widetilde{\H}(m,U(n))^{\Sigma_n}$ in Lemma \ref{monomial}. By \eqref{power sum} and Lemmas \ref{anti-symmetric} and \ref{least term}, the least term of $\widetilde{w}$ is included in some element $v\in R$. Since $v$ and $\widetilde{w}$ are $\Sigma_n$-invariant, their least terms coincide, and so by eliminating the least terms inductively, we obtain $w\in R$. Thus the statement holds.
\end{proof}


\subsection{Low dimensional cohomology for unitary groups}

We continue to assume that the ground field $\F$ is of characteristic zero or prime to $n!$. We determine $H^*(\Hom(\Z^m,U(n))_1;\F)\cong\H(m,U(n))^{\Sigma_n}$ in low dimensions, and show that a certain subset of $\overline{\S}(m,U(n))$ must be contained in every subset of $\overline{\S}(m,U(n))$ that generates $\H(m,U(n))^{\Sigma_n}$.

\begin{lem}\label{base}
  For any $\sigma\in\Sigma_n$,
  $$\mathcal{B}(\sigma)\coloneqq\{x_{\sigma(1)}^{i_1}x_{\sigma(2)}^{i_2}\cdots x_{\sigma(n-1)}^{i_{n-1}}\in\P(n)_{\Sigma_n}\mid i_k\leq n-k\text{ for }1\le k\le n-1\}$$
  is a basis of $\P(n)_{\Sigma_n}$.
\end{lem}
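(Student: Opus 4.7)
The plan is to reduce to the case $\sigma = \mathrm{id}$, compare dimensions, and then prove spanning via an explicit reduction identity.

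First, observe that the canonical permutation action of $\Sigma_n$ on $\P(n)$ preserves the invariant ideal $(e_1, \ldots, e_n)$ and hence descends to an $\F$-linear action on $\P(n)_{\Sigma_n}$. Since $\sigma \cdot (x_1^{i_1} \cdots x_{n-1}^{i_{n-1}}) = x_{\sigma(1)}^{i_1} \cdots x_{\sigma(n-1)}^{i_{n-1}}$, the automorphism induced by $\sigma$ carries $\mathcal{B}(\mathrm{id})$ bijectively onto $\mathcal{B}(\sigma)$, so it suffices to prove the lemma for $\sigma = \mathrm{id}$. A direct count gives $|\mathcal{B}(\mathrm{id})| = \prod_{k=1}^{n-1}(n - k + 1) = n!$, and by the Chevalley--Shephard--Todd theorem, $\dim_\F \P(n)_{\Sigma_n} = |\Sigma_n| = n!$. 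Hence it is enough to show that $\mathcal{B}(\mathrm{id})$ spans $\P(n)_{\Sigma_n}$.

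For the spanning step, I would derive, for each $1 \le i \le n$, the reduction relation
\[
x_i^{n-i+1} \equiv -\sum_{k=1}^{n-i+1} h_k(x_1, \ldots, x_{i-1}) \, x_i^{n-i+1-k} \pmod{(e_1, \ldots, e_n)},
\]
where $h_k$ denotes the $k$-th complete homogeneous symmetric polynomial. This is obtained from the trivial vanishing $\prod_{j=i}^n (x_i - x_j) = 0$, which when expanded gives $\sum_{k=0}^{n-i+1} (-1)^k e_k(x_i, \ldots, x_n)\, x_i^{n-i+1-k} = 0$, combined with the congruence $e_k(x_i, \ldots, x_n) \equiv (-1)^k h_k(x_1, \ldots, x_{i-1})$ in $\P(n)_{\Sigma_n}$. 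The latter follows by inverting the generating-function identity
\[
\sum_{k \ge 0} e_k t^k = \left( \sum_{k \ge 0} e_k(x_1, \ldots, x_{i-1}) t^k \right) \left( \sum_{k \ge 0} e_k(x_i, \ldots, x_n) t^k \right)
\]
modulo $(e_1, \ldots, e_n)$, together with the classical inversion $\bigl(\sum_k e_k t^k\bigr)^{-1} = \sum_k (-1)^k h_k t^k$.

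Every monomial appearing on the right-hand side of the reduction relation satisfies the staircase bound: the exponent of $x_i$ drops to at most $n - i$, and each exponent of $x_j$ for $j < i$ is at most $k \le n - i + 1 \le n - j$. To span $\P(n)_{\Sigma_n}$ by $\mathcal{B}(\mathrm{id})$, I would iterate the reduction on any monomial $x_1^{a_1} \cdots x_n^{a_n}$ at the largest index $i$ with $a_i > n - i$ (including the case $a_n > 0$, where the reduction degenerates to $x_n \equiv -(x_1 + \cdots + x_{n-1})$). Ordering monomials by the tuple $(a_n, a_{n-1}, \ldots, a_1)$ lexicographically, each application strictly decreases this rank because the $a_j$ for $j > i$ are untouched while $a_i$ drops, so the process terminates in a linear combination of elements of $\mathcal{B}(\mathrm{id})$. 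The main obstacle is confirming this termination cleanly, since intermediate monomials can temporarily violate the staircase constraint at lower indices before being reduced in a subsequent step; the lex-order bookkeeping above is what makes the iteration well-founded.
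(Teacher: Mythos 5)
Your proof is correct and takes essentially the same approach as the paper: reduce to $\sigma = \mathrm{id}$ by observing that the $\Sigma_n$-action descends to $\P(n)_{\Sigma_n}$ and permutes the candidate bases, then establish the base case. The paper simply cites Fulton (\cite[Section 10, Proposition 3]{F}) for the $\sigma = \mathrm{id}$ case, whereas you reproduce the standard argument (dimension count plus the reduction relations coming from $\prod_{j\ge i}(x_i - x_j) = 0$ and the identity $e_k(x_i,\ldots,x_n) \equiv (-1)^k h_k(x_1,\ldots,x_{i-1})$) from scratch; the lex-order termination bookkeeping correctly handles the temporary staircase violations you flag.
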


\begin{proof}
  As in \cite[Section 10, Proposition 3]{F}, $\mathcal{B}(1)$ is a basis of $\P(n)_{\Sigma_n}$. Then the statement holds by symmetry of $\P(n)_{\Sigma_n}$.
\end{proof}

Let $\mathcal{E}(m,n)$ denote the set of all monic monomials of $\bigotimes_{i=1}^m\Lambda(y_1^i,y_2^i,\ldots,y_n^i)$. Then for any map $\alpha\colon\mathcal{E}(m,n)\to\Sigma_n$, the set
$$\overline{\mathcal{B}}(\alpha)\coloneqq\{XY\mid X\in\mathcal{B}(\alpha(Y)),\;Y\in\mathcal{E}(m,n)\}$$
is a basis of $\H(m,U(n))$.

Let $\F\langle S\rangle$ denote the free commutative graded algebra over $\F$ generated by a graded set $S$, and define a subspace of $\F\langle\overline{\S}(m,U(n))\rangle$
$$V(m,n)\coloneqq\{z(d_1,I_1)\cdots z(d_k,I_k)\mid I_1<\cdots<I_k\text{ and }k+\max\{d_1,\ldots,d_k\}-1\le n\}.$$

\begin{lem}
  \label{inj}
  The projection
  $$\F\langle\overline{\S}(m,U(n))\rangle\to\H(m,U(n))^{\Sigma_n}$$
  is injective on $V(m,n)$
\end{lem}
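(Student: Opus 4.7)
The plan is a triangular‐matrix argument driven by the least‐term formula of Lemma \ref{least term} together with the ordering on $\mathcal{M}(m,n)$.

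First, for each $v=z(d_1,I_1)\cdots z(d_k,I_k)\in V(m,n)$, Lemma \ref{least term} identifies the least term of $\iota(v)\in\widetilde{\H}(m,U(n))^{\Sigma_n}$ (expanded in the basis of Lemma \ref{basis H}) as $u_v=x_1^{d_1-1}\cdots x_k^{d_k-1}y_1^{I_1}\cdots y_k^{I_k}$. The hypothesis $k+\max\{d_1,\ldots,d_k\}-1\le n$ in the definition of $V(m,n)$ yields $d_j-1\le n-k\le n-j$ for $1\le j\le k$, which is exactly the condition placing the polynomial factor of $u_v$ into the basis $\mathcal{B}(\text{id})$ of $\P(n)_{\Sigma_n}$. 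Consequently, if one chooses $\alpha$ so that $\alpha(y_1^{I_1}\cdots y_k^{I_k})=\mathrm{id}$ for every such exterior monomial, then $u_v$ descends to a genuine basis element of $\H(m,U(n))$ in $\overline{\mathcal{B}}(\alpha)$.

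Next, I observe that $v\mapsto u_v$ is injective on $V(m,n)$: from the exterior factor $y_1^{I_1}\cdots y_k^{I_k}$ with $I_1<\cdots<I_k$ one recovers $k$ and the sets $I_1,\ldots,I_k$, and then from the polynomial factor one recovers $d_1,\ldots,d_k$. Given a relation $\sum_j c_jv_j=0$ in $\H(m,U(n))^{\Sigma_n}$ (with $v_j\in V(m,n)$ and $c_j\ne 0$ of common total degree), order the $v_j$ so that $u_{v_1}<u_{v_2}<\cdots$. Lifting to $\widetilde{\H}^{\Sigma_n}$: since by definition each $\iota(v_j)$ has no expansion coefficient at any $u<u_{v_j}$, the only contribution to the coefficient of $\sum_\sigma\sigma(u_{v_1})$ in $\sum_j c_j\iota(v_j)$ comes from $\iota(v_1)$, and it is $c_1$ times a nonzero scalar.

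Finally, I push this leading term through the projection $p\colon\widetilde{\H}^{\Sigma_n}\to\H^{\Sigma_n}$. The orbit sum $\sum_\sigma\sigma(u_{v_1})$ decomposes as $(n-k_1)!\,u_{v_1}$, contributed by $\sigma\in\Sigma_{n-k_1}$ acting trivially on indices $\{1,\ldots,k_1\}$ (these fix both the polynomial and the exterior parts of $u_{v_1}$), plus terms whose exterior factors, as monic exterior monomials, differ from that of $u_{v_1}$. Reduction modulo the symmetric relations in $\P(n)_{\Sigma_n}$ does not affect exterior factors, so the basis coefficient at $u_{v_1}$ in $\overline{\mathcal{B}}(\alpha)$ receives contribution $(n-k_1)!\ne 0$ from this orbit sum. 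Coupled with an induction on the ordered list $u_{v_1}<u_{v_2}<\cdots$, this forces $c_1=0$ and hence all $c_j=0$.

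The main obstacle is the last step: one must rule out that a higher‐ordered orbit sum $\sum_\sigma\sigma(u)$ with $u>u_{v_1}$ contributes, after polynomial reduction in $\P(n)_{\Sigma_n}$, to the basis coefficient at $u_{v_1}$. Here the crucial point is that the ordering on $\mathcal{M}(m,n)$ was designed compatibly with reduction to $\mathcal{B}(\text{id})$ (refinement by $y$-multiplicity first, then lex on the polynomial factor), so the $\mathcal{B}(\text{id})$‐expansion of $\sum_\sigma\sigma(u)$ involves only monomials $\ge u$ in the ordering; in particular, no such $u$ produces the strictly smaller $u_{v_1}$.
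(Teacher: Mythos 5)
Your proposal takes the same overall route as the paper's proof (compute least terms via Lemma~\ref{least term}, place them in $\overline{\mathcal{B}}(\alpha)$ using the defining inequality of $V(m,n)$, then run a triangular argument in the ordered basis), and the first three steps are correct. The problem is your final ``crucial point,'' which is false. You claim that reduction of polynomials to $\mathcal{B}(\mathrm{id})$ is compatible with the ordering, so that the $\overline{\mathcal{B}}(\mathrm{id})$-expansion of $\sum_\sigma\sigma(u)$ involves only monomials $\ge u$. But the ordering on $\mathcal{M}(m,n)$ sorts the polynomial factor last, by the lexicographic order with $x_1<\cdots<x_n$, and reduction modulo the elementary symmetric polynomials produces lex-\emph{smaller} monomials, not larger ones. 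For instance, in $\P(3)_{\Sigma_3}$ one has $x_2^2\equiv -x_1^2-x_1x_2$, and both $x_1^2$ and $x_1x_2$ are lex-smaller than $x_2^2$. So a higher-ordered orbit sum $\sum_\sigma\sigma(u)$ with $u>u_{v_1}$ can in principle contribute, after reduction, to the $u_{v_1}$-coefficient, and your argument does not rule this out.

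The way to close the gap (and what the paper's terse ``Thus'' is implicitly invoking) is to exploit the strict chain $I_1<\cdots<I_k$ in the definition of $V(m,n)$. Given $v_j=z(d_1,I_1)\cdots z(d_k,I_k)\in V(m,n)$, consider the terms of its image in $\H$ whose exterior factor equals $Y_1:=y_1^{I_1(v_1)}\cdots y_{k_1}^{I_{k_1}(v_1)}$, the exterior factor of $u_{v_1}$. A term of $v_j$ has $\le k_j$ distinct $y$-indices, and by condition (3) of the ordering, $u_{v_j}\ge u_{v_1}$ forces $k_j\le k_1$. So the only way the exterior factor can match $Y_1$ is if $k_j=k_1$ and the assignment of indices is the identity on $[k_1]$; since both $I$-sequences are strictly increasing, this forces $I_p(v_j)=I_p(v_1)$ for all $p$. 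The corresponding polynomial factor is then $x_1^{d_1(v_j)-1}\cdots x_{k_1}^{d_{k_1}(v_j)-1}$, and the bound $d_p-1\le n-k_1\le n-p$ places it already in $\mathcal{B}(\mathrm{id})$, so no reduction occurs. The $u_{v_1}$-coefficient of $v_j$ therefore vanishes unless $v_j=v_1$, and the triangular argument goes through without any claim about the behavior of lex reduction.
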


\begin{proof}
  By Lemma \ref{least term}, the least term of $z(d_1,I_1)\cdots z(d_k,I_k)\in V(m,n)$ for $I_1<\cdots<I_k$ is
  $$x_1^{d_1-1}\cdots x_k^{d_k-1}y_1^{I_1}\cdots y_k^{I_k}.$$
  Note that $d_i-1\le\max\{d_1,\ldots,d_k\}-1\le n-k\le n-i$ for $1\le i\le k$. Then the least terms of elements of $V(m,n)$ are mutually distinct elements of $\overline{\mathcal{B}}(\alpha)$ for any map $\alpha\colon\mathcal{E}(m,n)\to\Sigma_n$ satisfying $\alpha(y_1^{I_1}\cdots y_k^{I_k})\in\Sigma_k$. Thus elements of $V(m,n)$ are linearly independent in $\H(m,U(n))^{\Sigma_n}$, completing the proof.
\end{proof}

We set
$$\S(m,U(n))\coloneqq\{z(d,I) \in\overline{\S}(m,U(n))\mid d+|I|-1\leq n\}.$$
Now we are ready to describe $\H(m,\Sigma_n)^{\Sigma_n}$ in low dimensions.

\begin{thm}[Theorem \ref{low dim intro}]\label{low dim}
  The map $\F\langle\S(m,U(n))\rangle\to\H(m,U(n))^{\Sigma_n}$ is an isomorphism in dimension $\le 2n-m$.
\end{thm}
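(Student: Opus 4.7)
The plan is to establish surjectivity and injectivity of the map $\F\langle\S(m,U(n))\rangle\to\H(m,U(n))^{\Sigma_n}$ in degrees $\le 2n-m$ separately, building on Theorem \ref{generator U not minimal} and the least-term machinery developed for Lemma \ref{inj}.

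For surjectivity, Theorem \ref{generator U not minimal} already yields a surjection $\F\langle\overline{\S}(m,U(n))\rangle\twoheadrightarrow\H(m,U(n))^{\Sigma_n}$, so it suffices to show that in degrees $\le 2n-m$, only generators from $\S$ appear. Since each factor $z(d,I)\in\overline{\S}$ has degree $|z(d,I)|=2d+|I|-2\ge 1$, any monomial of total degree $\le 2n-m$ has every factor of degree $\le 2n-m$. For such a factor, $2d+|I|\le 2n-m+2$, and combined with $|I|\le m$ this gives $2(d+|I|)=(2d+|I|)+|I|\le 2n-m+2+m=2n+2$, hence $d+|I|-1\le n$, so $z(d,I)\in\S$.

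For injectivity, the strategy is to extend the least-term argument of Lemma \ref{inj} from the auxiliary set $V(m,n)$ to a full graded basis of $\F\langle\S\rangle$ in degrees $\le 2n-m$. A basis of this free graded commutative algebra consists of ordered monomials $\prod_j z(d_j,I_j)^{a_j}$ with distinct $(d_j,I_j)\in\S$, $a_j\ge 1$, and $a_j=1$ whenever $|I_j|$ is odd. Generalizing Lemma \ref{least term} to such products, I expect the least term (under the ordering on $\mathcal{M}(m,n)$) to take the form
\[
  x_1^{e_1}\cdots x_r^{e_r}\,y_1^{J_1}\cdots y_r^{J_r},
\]
where $r=\sum_j a_j$ and the sequence $(e_i+1,J_i)$ is produced by listing each pair $(d_j,I_j)$ with multiplicity $a_j$ in the canonical ordering by $(|I|,I,d)$. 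The constraint $d_j+|I_j|-1\le n$ coming from membership in $\S$ forces $e_i\le n-i$, so the polynomial part of the least term lies in the basis $\mathcal{B}(1)$ of Lemma \ref{base}. Consequently, distinct basis monomials of $\F\langle\S\rangle$ yield distinct least terms in $\overline{\mathcal{B}}(\alpha)$ for a suitable $\alpha\colon\mathcal{E}(m,n)\to\Sigma_n$, establishing injectivity.

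The principal technical obstacle is that, unlike the set $V(m,n)$ of Lemma \ref{inj} where the $I_j$ are strictly increasing, a basis of $\F\langle\S\rangle$ in degree $\le 2n-m$ includes monomials with repeated subsets $I_j$, for example $z(d,I)z(d',I)$ with $|I|$ odd and $d\ne d'$, or higher powers $z(d,I)^a$ with $|I|$ even. Uniformly extending the least-term calculation of Lemma \ref{least term} to cover all these monomials, and verifying that no two distinct basis monomials of $\F\langle\S\rangle$ within the relevant degree range share a least term in the basis $\overline{\mathcal{B}}(\alpha)$, is the delicate combinatorial step on which the whole argument turns.
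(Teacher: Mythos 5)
Your surjectivity argument is correct and cleanly proved, and matches the paper's implicit "degree reason" (the paper leaves this unexplained).

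For injectivity, you have correctly flagged a real subtlety that the paper's two-line proof glosses over: the paper appeals to Lemma~\ref{inj}, which establishes injectivity only on $V(m,n)$, and $V(m,n)$ is defined with the strict inequality $I_1<\cdots<I_k$, so it contains no product with repeated index sets. But $\F\langle\S\rangle$ in degrees $\le 2n-m$ does contain such products (for example $z(1,\{1\})z(2,\{1\})z(3,\{1\})$ in degree $9$ when $m=1$, $n\ge 5$), so the paper's asserted containment $\F\langle\S\rangle_{\le 2n-m}\subseteq V(m,n)$ fails as written. The intended reading is that $V(m,n)$ should permit $I_1\le\cdots\le I_k$ with the $d_j$ also sorted within each equal-$I$ block, which is the generality already built into Lemma~\ref{least term}, and that is exactly the extension you propose. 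So far, so good.

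Where your argument goes wrong is the sentence asserting that membership in $\S$ alone forces $e_i\le n-i$. It doesn't: $z(d,I)\in\S$ gives $e_i=d-1\le n-|I|$, which is $\le n-i$ only if $|I|\ge i$, and the factors of small cardinality sit early in your ordering precisely where this fails. For instance $z(10,\{1\})z(10,\{2\})$ with $n=10$, $m=2$ has both factors in $\S$ but $e_2=9>n-2=8$. What saves the day is the degree restriction $\le 2n-m$, which you dropped: a computation like the one in Lemma~\ref{dim 2n-m+1} shows that any product $z(d_1,I_1)\cdots z(d_k,I_k)$ of $\S$-factors with total degree $\le 2n-m$ must satisfy $k+\max\{d_j\}-1\le n$ (the minimum possible degree of such a product is $2\max\{d_j\}+2k-m-3$, and forcing this $\le 2n-m$ yields $\max\{d_j\}+k\le n+1$). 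That is the uniform bound $e_i\le n-k\le n-i$ from the proof of Lemma~\ref{inj}, which is strictly stronger than what you claim and is what actually places the polynomial part of the least term inside $\mathcal{B}(1)$. So the fix is real and follows the paper's mechanism; replace your one-line justification by this degree estimate and your injectivity argument closes.
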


\begin{proof}
  By a degree reason, the inclusion $\F\langle\S(m,U(n))\rangle\to\F\langle\overline{\S}(m,U(n))\rangle$ is an isomorphism in dimension $\le 2n-m$, and the image of this inclusion is contained in $V(m,n)$ in dimension $\le 2n-m$. Then by Lemmas \ref{generator U not minimal} and \ref{inj}, the proof is complete.
\end{proof}

We end this subsection by proving the following lemma.

\begin{lem}\label{stable range U}
  $\S(m,U(n))$ is included in every subset of $\overline{\S}(m,U(n))$ that generates $\H(m,U(n))^{\Sigma_n}$.
\end{lem}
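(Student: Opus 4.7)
The plan is to argue by contradiction: suppose $T\subseteq\overline{\S}(m,U(n))$ generates $\H(m,U(n))^{\Sigma_n}$ and $z(d_0,I_0)\in\S(m,U(n))\setminus T$, and split into two cases based on the degree $\delta=|z(d_0,I_0)|=2d_0+|I_0|-2$.

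In the case $\delta\le 2n-m$, first note that any $z(d,I)\in\overline{\S}(m,U(n))\setminus\S(m,U(n))$ satisfies $d+|I|\ge n+2$, so $|z(d,I)|=2d+|I|-2\ge 2(n+2)-|I|-2\ge 2n-m+2>\delta$. Hence in any homogeneous polynomial expression of $z(d_0,I_0)$ as an element of $\F\langle T\rangle$, every factor lies in $T\cap\S(m,U(n))\subseteq\S(m,U(n))\setminus\{z(d_0,I_0)\}$. By Theorem \ref{low dim} the map $\F\langle\S(m,U(n))\rangle\to\H(m,U(n))^{\Sigma_n}$ is an isomorphism in dimension $\le 2n-m$; thus in this range $z(d_0,I_0)$ is a distinct free generator and is not in the subalgebra generated by the others, giving the desired contradiction.

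In the case $\delta>2n-m$, the estimate $\delta\le 2n-|I_0|$ (which follows from $z(d_0,I_0)\in\S$) forces $|I_0|<m$. Set $m'=|I_0|$, fix a bijection $\tau\colon I_0\to[m']$, and introduce the $\Sigma_n$-equivariant algebra map
\[
  \psi\colon\H(m,U(n))\to\H(m',U(n)),\qquad x_j\mapsto x_j,\quad y_j^i\mapsto\begin{cases}y_j^{\tau(i)}&i\in I_0,\\0&i\notin I_0.\end{cases}
\]
The natural inclusion $\H(m',U(n))\hookrightarrow\H(m,U(n))$ (placing $1$ in each killed exterior factor) is a $\Sigma_n$-equivariant section of $\psi$, so $\psi$ restricts to a surjection on $\Sigma_n$-invariants and $\psi(T)$ generates $\H(m',U(n))^{\Sigma_n}$. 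Moreover $\psi(z(d,I))=z(d,\tau(I))$ for $I\subseteq I_0$ and $\psi(z(d,I))=0$ otherwise, and $\psi$ is injective on $\{z(d,I):I\subseteq I_0\}$; hence $z(d_0,I_0)\notin T$ yields $z(d_0,\tau(I_0))\notin\psi(T)$. Because $d_0+m'-1\le n$ gives $|z(d_0,\tau(I_0))|=2d_0+m'-2\le 2n-m'$, the first case applied to the triple $(m',n,z(d_0,\tau(I_0)))$ delivers the contradiction.

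The main obstacle is the regime $\delta>2n-m$, where Theorem \ref{low dim} does not apply directly; the forgetful projection $\psi$, together with its equivariant section, is the key device reducing every such instance to the low-degree regime at smaller $m$.
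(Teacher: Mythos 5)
Your proof is correct, but it follows a genuinely different route from the paper's. The paper proves the lemma with a single filtration argument: it defines $F_i\subset\H(m,U(n))^{\Sigma_n}$ as the span of products $z(d_1,I_1)\cdots z(d_k,I_k)$ of total word-length $\sum(d_j+|I_j|-1)\le i$, notes $\S(m,U(n))\subset F_n\cap V(m,n)$ and $F_iF_j\subset F_{i+j}$, and argues that if $z(d,I)$ with $d+|I|-1=i$ were expressible in the remaining generators it would lie in $F_{i-1}\cap V(m,n)$, contradicting Lemma~\ref{inj}; this recycles the least-term ordering machinery underlying Lemmas~\ref{least term} and~\ref{inj}. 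You instead split on the degree $\delta=|z(d_0,I_0)|$: when $\delta\le 2n-m$ you invoke Theorem~\ref{low dim} directly, after the key observation that every element of $\overline{\S}(m,U(n))\setminus\S(m,U(n))$ has degree $\ge 2n-m+2>\delta$ so only $\S$-generators can appear in a homogeneous expression of $z(d_0,I_0)$; when $\delta>2n-m$ you deduce $|I_0|<m$ and introduce the $\Sigma_n$-equivariant forgetful map $\psi\colon\H(m,U(n))\to\H(|I_0|,U(n))$ killing $y^i_j$ for $i\notin I_0$, whose equivariant section makes $\psi$ surjective on invariants and transports a generating set $T$ missing $z(d_0,I_0)$ to a generating set missing $z(d_0,\tau(I_0))$ — which now sits in the low-degree regime for the smaller $m'=|I_0|$, so Case~1 applies. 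Your reduction device is clean and self-contained, using only the already-established Theorem~\ref{low dim} rather than the filtration machinery, and it makes the interaction between $m$, $n$, and the degree threshold fully explicit; the paper's one-step argument is shorter but leaves the filtration-drop step terse. Both approaches are sound and, by the same token, yours adapts verbatim to the $Sp(n)$ and $SO(2n+1)$ analogues (Lemma~\ref{stable range Sp}).
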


\begin{proof}
  Let $F_i$ denote the subspace of $\H(m,U(n))^{\Sigma_n}$ generated by $z(d_1,I_1)\cdots z(d_k,I_k)$ such that $d_1+\cdots+d_k+|I_1|+\cdots+|I_k|-k\le i$. Then we get a filtration
  $$\{0\}=F_0\subset F_1\subset F_2\subset\cdots\subset\H(m,U(n))^{\Sigma_n}.$$
  Note that this filtration is induced from that of $\widetilde{\H}(m,U(n))^{\Sigma_n}$ defined by word-length with respect to $x_i$ and $y_i^j$. Clearly,
  $$F_i\cdot F_j\subset F_{i+j}\quad\text{and}\quad\S(m,U(n))\subset V(m,n)\cap F_n.$$
  Suppose that an element $z(d,I)$ of $\S(m,U(n))$ with $d+|I|-1=i$ is expressed by the remaining elements of $\S(m,U(n))$ Then $z(d,I)$ belongs to $F_{i-1}\cap V(m,n)$. This is a contradiction to Lemma \ref{inj}, completing the proof.
\end{proof}


\subsection{Minimal generating set for unitary groups}

We continue to assume that the ground field $\F$ is of characteristic zero or prime to $n!$. We give a minimal generating set of $\H(m,U(n))^{\Sigma_n}$ contained in $\overline{\S}(m,U(n))$. We begin with a simple observation. In $\H(3,U(2))^{\Sigma_2}$,
\begin{align*}
  &z(1,\{1\})z(1,\{2\})z(1,\{3\})\\
  &=(y_1^1+y_2^1)(y_1^2+y_2^2)(y_1^3+y_2^3)\\
  &=(y_1^1y_1^2+y_2^1y_2^2)(y_1^3+y_2^3)-(y_1^1y_1^3+y_2^1y_2^3)(y_1^2+y_2^2)\\
  &\quad+(y_1^2y_1^3+y_2^2y_2^3)(y_1^1+y_2^1)-2(y_1^1y_1^2y_1^3+y_2^1y_2^2y_2^3)\\
  &=z(1,\{1,2\})z(1,\{3\})-z(1,\{1,3\})z(1,\{2\})\\
  &\quad+z(1,\{2,3\})z(1,\{1\})-2z(1,\{1,2,3\})
\end{align*}
Then we get a relation among $\overline{\S}(3,U(2))$. We can think that this relation comes out from the Newton formula \eqref{Newton formula}, hence we prove the following generalized Newton formula to deduce relations among $\overline{\S}(m,U(n))$.

For $d\ge 0$ and $I=\{i_1<\dots<i_k \}\subset[ m ]$, we define an element of $\widetilde{\H}(m,U(n))$
$$e(d,I)\coloneqq \sum_{\sigma\in\Emb(k,n)} x_{\sigma(1)}^dy_{\sigma(1)}^{i_1}y_{\sigma(2)}^{i_2}\dots y_{\sigma(k)}^{i_{k}}.$$

\begin{lem}\label{Newton}
  For $k\le n$ and $I=\{i_1<\dots <i_k\}\subset[m]$,
  \begin{align*}
    &\sum_{l=0}^{k-2}\sum_{\substack{i_1\in J\subset I\\|J|=l+1}}(-1)^{l+d(J)}l!z(d,J)e(0,I-J)+(-1)^{k+1}(k-1)!z(d,I)\\
    &=
    \begin{cases}
      e(d-1,I)&(k\le n)\\
      0&(k>n)
    \end{cases}
  \end{align*}
  in $\widetilde{\H}(m,U(n))$, where $d(J)=(\sum_{i_j\in J} j)-\frac{|J|(|J|+1)}{2}$.
\end{lem}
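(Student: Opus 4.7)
The plan is to combine the left-hand side into a single sum over subsets $J \subset I$ containing $i_1$, then expand each summand into monomials and reindex. Writing $J = \{i_{p_1} < \cdots < i_{p_{|J|}}\}$ with position set $P = \{p_1, \ldots, p_{|J|}\} \subset [k]$, one checks $d(J) = \sum_s (p_s - s)$; in particular $d(I) = 0$ and $e(0, \emptyset) = 1$, so the isolated term $(-1)^{k+1}(k-1)!\,z(d, I)$ is exactly the $J = I$ (equivalently $l = k - 1$) term. Hence the LHS equals
\[
    \sum_{\substack{J \subset I \\ i_1 \in J}} (-1)^{|J|-1+d(J)}(|J|-1)! \, z(d, J)\, e(0, I - J).
\]

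Next, I expand each summand. Writing $I - J = \{i_{q_1} < \cdots < i_{q_{k-|J|}}\}$, we get
\[
    z(d, J) \, e(0, I - J) = \sum_{a, \tau} x_a^{d-1} \, y_a^{i_{p_1}} \cdots y_a^{i_{p_{|J|}}} \cdot y_{\tau(1)}^{i_{q_1}} \cdots y_{\tau(k-|J|)}^{i_{q_{k-|J|}}},
\]
summed over $a \in [n]$ and injections $\tau \colon [k-|J|] \to [n]$. Reordering the exterior factors into the natural order $y^{i_1}, \ldots, y^{i_k}$ is a shuffle whose sign is $(-1)^{d(J)}$, since the number of inversions in merging the increasing sequences $(p_1, \ldots, p_{|J|})$ and $(q_1, \ldots, q_{k-|J|})$ equals $\sum_s (p_s - s) = d(J)$. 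This exactly cancels the $(-1)^{d(J)}$ in the coefficient. Defining $\phi \colon [k] \to [n]$ by $\phi(p_s) = a$ for $p_s \in P$ and $\phi(q_s) = \tau(s)$ otherwise, each expanded summand becomes $x_{\phi(1)}^{d-1} y_{\phi(1)}^{i_1} \cdots y_{\phi(k)}^{i_k}$ with coefficient $(-1)^{|P|-1}(|P|-1)!$.

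Finally I reindex the total sum by $\phi \colon [k] \to [n]$. The value $a = \phi(1)$ is forced, and $(J, a, \tau)$ producing a given $\phi$ corresponds to a subset $P \subset A \coloneqq \phi^{-1}(a)$ with $1 \in P$ for which $\phi|_{[k]\setminus P}$ is injective. A short case analysis shows that when $\phi|_{[k]\setminus A}$ is not injective no $P$ is admissible, whereas otherwise the admissible $P$'s are $A$ itself (size $c \coloneqq |A|$) together with $A \setminus \{s\}$ for each $s \in A \setminus \{1\}$ (size $c - 1$). Summing the associated coefficients yields
\[
    (-1)^{c-1}(c-1)! + (c-1)(-1)^{c-2}(c-2)! = 0 \qquad (c \ge 2),
\]
so non-injective $\phi$ contribute nothing. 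When $c = 1$, $\phi$ is itself injective and only $P = \{1\}$ is admissible with coefficient $1$; summing over all injective $\phi$ yields $e(d-1, I)$, which vanishes exactly when $k > n$. The main obstacle is the bookkeeping in this reindexing step: once $d(J)$ is recognized as the shuffle sign and the admissible $P$'s for each $\phi$ are enumerated, the cancellation is driven by the elementary identity $(c-1)! = (c-1)(c-2)!$.
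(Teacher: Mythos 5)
Your proof is correct, and it takes a genuinely different route than the paper's. The paper works one summand $x_i^{d-1}y_i^J$ of $z(d,J)$ at a time (so the statement is proved for each fixed $i\in[n]$ and then summed over $i$), introduces the auxiliary quantity $f(i,I-J)$ — the part of $e(0,I-J)$ whose injection hits $i$ — and establishes a telescoping recursion in $|J|$ that pushes the $f$-terms up the levels until they are consumed by the top term $(k-1)!\,x_i^{d-1}y_i^I$ and the base term with $|J|=1$. You instead absorb the isolated term into the sum (using $d(I)=0$ and $e(0,\emptyset)=1$), expand every summand into monomials, identify $(-1)^{d(J)}$ with the shuffle sign so that it cancels, and then collect the coefficient of the basis monomial $x_{\phi(1)}^{d-1}y_{\phi(1)}^{i_1}\cdots y_{\phi(k)}^{i_k}$ indexed by a map $\phi\colon[k]\to[n]$. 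The entire cancellation then rests on a fiber count over $\phi$ and the single elementary identity $(-1)^{c-1}(c-1)!+(c-1)(-1)^{c-2}(c-2)!=0$ for $c\ge 2$. What this buys is transparency: the recursion is replaced by one explicit coefficient computation, and the two cases $k\le n$ and $k>n$ fall out simultaneously (no injective $\phi$ exists when $k>n$). The cost is the bookkeeping in enumerating the admissible triples $(J,a,\tau)$ for a given $\phi$, which you carry out correctly, including the observations that only $P\subset\phi^{-1}(\phi(1))$ with $1\in P$ and $\phi|_{[k]\setminus P}$ injective contribute, and that $c=1$ with $\phi|_{[k]\setminus A}$ injective forces $\phi$ itself to be injective.
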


\begin{proof}
  Suppose that the identities
  \begin{align*}
    &\sum_{l=0}^{k-2}\sum_{\substack{i_1\in J\subset I\\|J|=l+1}}(-1)^{l+d(J)}l!x_{i}^{d-1}y_i^Je(0,I-J)+(-1)^{k}(k-1)!x_{i}^{d-1}y_{i}^I\\
    &=\begin{cases}
      \displaystyle\sum_{\sigma}x_{i}^{d-1}y_{i}^{i_1}y_{\sigma(2)}^{i_2}\dots y_{\sigma(k)}^{i_k}&(k\le n)\\
      0&(k>n).
    \end{cases}
  \end{align*}
  in $\widetilde{\H}(m,U(n))$ hold, where $\sigma$ ranges over all injections $[k]-1\to[n]-i$. Then the identities in the statement are obtained by summing up these identities. Hence we prove these identities. Let
  $$f(i,I)\coloneqq \sum_{\substack{\sigma\in\Emb(k,n)\\i\in\sigma([k])}}y_{\sigma(1)}^{i_1}\cdots y_{\sigma(k)}^{i_{k}}.$$
  Then for $1\leq l\leq k-2$,
  \begin{align*}
    &\sum_{\substack{i_1\in J\subset I\\|J|=l}}(-1)^{d(J)}x_{i}^{d-1}y_i^Jf(i,I-J)\\
    &=\sum_{\substack{i_1\in J\subset I\\|J|=l\\ I-J=\{j_1,\dots j_{k-l}\}}}(-1)^{d(J)}x_{i}^{d-1}y_i^J\sum_{\substack{\sigma\in\Emb(k-l,n)\\i\in\sigma([k-l])}}y_{\sigma(1)}^{j_1}\cdots y_{\sigma(k-l)}^{j_{k-l}}.
  \end{align*}
  Moreover,
  \begin{align*}
    &\sum_{\substack{\sigma\in\Emb(k-l,n)\\i\in\sigma([k-l])}}y_{\sigma(1)}^{j_1}\cdots y_{\sigma(k-l)}^{j_{k-l}}\\
    &=\sum_{1\leq s\leq k-l}(-1)^{s-1}y_i^{j_s}\sum_{\substack{\sigma\in\Emb(k-l-1,n)\\i\notin\sigma([k-l-1])}}y_{\sigma(1)}^{j_1}\cdots y_{\sigma(s-1)}^{j_{s-1}}y_{\sigma(s)}^{j_{s+1}} \cdots  y_{\sigma(k-l-1)}^{j_{k-l}}.
  \end{align*}
  Then it follows that
  \begin{align*}
  &\sum_{\substack{i_1\in J\subset I\\|J|=l}}(-1)^{d(J)}x_{i}^{d-1}y_i^Jf(i,I-J)\\
  &=\sum_{\substack{i_1\in J\subset I\\|J|=l\\ I-J=\{j_1,\dots j_{k-l}\}}}\sum_{1\leq s\leq k-l}(-1)^{d(J\bigcup \{j_s\})}x_{i}^{d-1}y_i^{J\bigcup\{j_s\}}\\
  &\qquad\qquad\qquad\sum_{\substack{\sigma\in\Emb(k-l-1,n)\\i\notin\sigma([k-l-1])}}y_{\sigma(1)}^{j_1}\cdots y_{\sigma(s-1)}^{j_{s-1}}y_{\sigma(s)}^{j_{s+1}} \cdots  y_{\sigma(k-l-1)}^{j_{k-l}}\\
  &=\sum_{\substack{i_1\in J\subset I\\|J|=l+1}}(-1)^{d(J)}lx_{i}^{d-1}y_i^J(e(0,I- J)-f(i,I- J)).
  \end{align*}
  By a similar calculation, one also gets
  $$\sum_{\substack{i_1\in J\subset I \\ |J|=k-1}}(-1)^{d(J)}x_{i}^{d-1}y_i^Jf(i,I-J)=(k-1)x_{i}^{d-1}y_{i}^I$$
  and for $k\le n$,
  $$\sum_{\substack{i_1\in J\subset I \\ |J|=1}}(-1)^{d(J)}x_{i}^{d-1}y_i^J(e(0,I-J)-f(i,I-J))=\sum_{ \sigma }x_{i}^{d-1}y_{i}^{i_1}y_{\sigma(2)}^{i_2}\cdots y_{\sigma(k)}^{i_{k}},$$
  where $\sigma$ ranges over all injections $[k]-1\to[n]-i$. Thus by combining these three identities, we obtain the desired identities.
\end{proof}

We need the following three lemmas to deduce relations among $\overline{\S}(m,U(n))$ from Lemma \ref{Newton}.

\begin{lem}\label{coinv}
  For $k\le n$,
  $$\sum_{i_1+\cdots +i_k=n-k+1}x_1^{i_1}\cdots x_k^{i_k}=0\quad\text{in}\quad\P(n)_{\Sigma_n}.$$
\end{lem}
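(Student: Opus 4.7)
The plan is to recognize the sum $\sum_{i_1+\cdots+i_k = n-k+1} x_1^{i_1}\cdots x_k^{i_k}$ as the complete homogeneous symmetric polynomial $h_{n-k+1}(x_1,\ldots,x_k)$, and then exploit the standard generating function relationship between the elementary and complete homogeneous symmetric polynomials. Since $\P(n)_{\Sigma_n} = \P(n)/(e_1,\ldots,e_n)$, where $e_i = e_i(x_1,\ldots,x_n)$, one has
$$\prod_{i=1}^n (1 - x_i t) \;=\; \sum_{j=0}^n (-1)^j e_j\, t^j \;\equiv\; 1 \pmod{(e_1,\ldots,e_n)}$$
in the polynomial ring over $\P(n)_{\Sigma_n}$.

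Next, I would factor the left-hand side as $\prod_{i=1}^n (1-x_i t) = \prod_{i=1}^k (1-x_i t) \cdot \prod_{i=k+1}^n (1-x_i t)$ and invert (legitimate in $\P(n)_{\Sigma_n}[[t]]$ since each factor has constant term $1$) to obtain
$$\sum_{j\ge 0} h_j(x_1,\ldots,x_k)\, t^j \;=\; \prod_{i=1}^k \frac{1}{1-x_i t} \;=\; \prod_{i=k+1}^n (1-x_i t).$$
The right-hand side is a polynomial in $t$ of degree $n-k$, so comparing coefficients of $t^{n-k+1}$ yields $h_{n-k+1}(x_1,\ldots,x_k) = 0$ in $\P(n)_{\Sigma_n}$, which is the desired identity. (As a bonus, this argument shows $h_j(x_1,\ldots,x_k) = 0$ in $\P(n)_{\Sigma_n}$ for every $j \ge n-k+1$.)

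There is essentially no real obstacle. The proof is a routine generating-function manipulation once one recognizes the sum as $h_{n-k+1}$ in $k$ variables; the only subtle point is that the inversion is performed inside the quotient ring $\P(n)_{\Sigma_n}[[t]]$ rather than in $\P(n)[[t]]$, but this is unambiguous because $\prod_{i=1}^k(1-x_i t)$ remains a unit after passing to the quotient.
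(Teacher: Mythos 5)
Your proof is correct and takes essentially the same route as the paper: both recognize the sum as $h_{n-k+1}(x_1,\ldots,x_k)$, use that $\prod_{i=1}^n(1-x_it)\equiv 1$ in $\P(n)_{\Sigma_n}[[t]]$ to deduce $\prod_{i=1}^k(1-x_it)^{-1}=\prod_{i=k+1}^n(1-x_it)$, and compare coefficients of $t^{n-k+1}$. Your presentation is in fact slightly cleaner than the paper's, which derives the same identity by a more roundabout subtraction-and-division manipulation.
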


\begin{proof}
  Define $h_{a,k},\,e_{a,b,k}\in\P(n)$ by the identities of formal power series in $t$ over $\P(n)$
  $$\sum_{a\ge 0} h_{a,k}t^a=\prod_{1\leq j\leq k}\frac {1}{1-x_jt}\quad\text{and}\quad\sum_{0\leq a\leq k} (-1)^ae_{a,b}t^a=\prod_{k\leq j\leq k+b}(1-x_jt).$$
  By a straightforward calculation, we can see that
  $$h_{a,k}=\sum_{i_1+\cdots+i_k=a}x_1^{i_1}\cdots x_k^{i_k}\quad\text{and}\quad e_{a,b,k}=\sum_{k\leq i_1 < \cdots <i_a\leq k+b }x_{i_1}\cdots x_{i_a}.$$
  Note that
  \begin{align*}
   &\prod_{1\leq j\leq k}\frac {1}{1-x_jt}-\prod_{k+1\leq j\leq n}\left(1-x_jt\right)\\
   &=\frac{1-\prod_{1\leq j\leq n}\left(1-x_jt\right)}{\prod_{1\leq j\leq k}(1-x_jt)}\\
   &=\left(\sum_{1\leq a \leq n}(-1)^{l+1} e_{a,n-1,1}t^a\right)\prod_{1\leq j\leq k}\frac {1}{1-x_jt}.
  \end{align*}
  Since $e_{a,n-1,1}=0$ in $\P(n)_{\Sigma_n}$, we get
  $$\prod_{1\leq l\leq k}\frac {1}{1-x_jt}=\prod_{1\leq j\leq n-k}(1-x_{k+j}t)$$
  as formal power series over $\P(n)_{\Sigma_n}$. Note that the coefficient of $t^{n-k+1}$ in the LHS is $h_{n-k+1,k}$ and the RHS is 0. Thus $h_{n-k+1,k}=0$ in $\P(n)_{\Sigma_n}$, which is the identity in the statement.
\end{proof}

Let $\genfrac{\{}{\}}{0pt}{0}{n}{k}$ denote the Stirling number of the second count. Then $\genfrac{\{}{\}}{0pt}{0}{n}{k}$ counts the number of partitions of $[n]$ into $k$ non-empty subsets. Recall from \cite[(1.94d)]{S} that the following identity holds.

\begin{lem}
  \label{Stirling 2}
  There is a equation
  $$\sum_{k=1}^n(-1)^k(k-1)!\genfrac{\{}{\}}{0pt}{0}{n}{k}=0.$$
\end{lem}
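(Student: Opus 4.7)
The plan is a direct formal power series computation, as the identity is purely combinatorial. Start from the standard exponential generating function for the Stirling numbers of the second kind,
\[
  \sum_{n \geq k} \genfrac{\{}{\}}{0pt}{0}{n}{k} \frac{x^n}{n!} = \frac{(e^x - 1)^k}{k!}.
\]
Multiplying by $(-1)^k (k-1)!$ and summing over $k \geq 1$, one gets
\[
  \sum_{n \geq 1} \left(\sum_{k=1}^n (-1)^k (k-1)! \genfrac{\{}{\}}{0pt}{0}{n}{k}\right) \frac{x^n}{n!} = \sum_{k \geq 1} \frac{(-1)^k (e^x - 1)^k}{k} = -\ln\bigl(1 + (e^x - 1)\bigr) = -x,
\]
where the second equality is the Taylor expansion $-\ln(1+u) = \sum_{k \geq 1} (-1)^k u^k / k$ applied with $u = e^x - 1$. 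Extracting the coefficient of $x^n/n!$ on both sides then yields the asserted identity for all $n \geq 2$.

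Since the lemma is already quoted as Stanley's formula (1.94d), the actual task is essentially to point to that reference, and the computation above is a self-contained derivation. I do not anticipate any substantive obstacle: the only subtlety is justifying the interchange of summation and the use of the logarithm series, both of which are routine since we are working entirely in the ring of formal power series in $x$. Note that the identity is strictly speaking false at $n=1$, where the left hand side equals $-1$, matching the $-x$ on the right; the intended applications of the lemma in this paper are only in the range $n \geq 2$, which is precisely the range for which the computation above gives zero.
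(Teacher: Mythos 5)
Your derivation is correct. Multiplying $\sum_{n\ge k}\genfrac{\{}{\}}{0pt}{0}{n}{k}\,x^n/n!=(e^x-1)^k/k!$ by $(-1)^k(k-1)!$, summing over $k\ge1$, and applying $\sum_{k\ge1}(-1)^ku^k/k=-\ln(1+u)$ with $u=e^x-1$ gives $-x$ as a formal power series; reading off the coefficient of $x^n/n!$ gives $0$ for $n\ge2$ and $-1$ for $n=1$. The paper offers no argument of its own here --- Lemma~\ref{Stirling 2} is stated with a bare citation to \cite[(1.94d)]{S} --- so yours is a self-contained proof rather than a competing route. Your observation that the displayed identity is literally false at $n=1$ is correct (the left-hand side is $-1$); this is harmless for the paper, since the only place Lemma~\ref{Stirling 2} is invoked, the inductive step in the proof of Lemma~\ref{least}, applies it with the Stirling parameter equal to $k\ge2$. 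For reference, the same identity also drops out of the M\"obius function of the partition lattice $\Pi_n$, via $\mu(\pi,\hat{1})=(-1)^{k-1}(k-1)!$ for $\pi$ with $k$ blocks and $\sum_{\pi\in\Pi_n}\mu(\pi,\hat{1})=0$ for $n\ge2$, which is presumably closer to Stanley's treatment; but the generating-function argument is at least as short and entirely elementary.
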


\begin{lem}\label{least}
  For $k\ge 2$, let $I_1,\ldots I_k$ be pairwise disjoint subsets of $[n]$, and let $d_1,\ldots d_k$ be non-negative integers. Then the element
  $$w\coloneqq\sum_{\sigma\in\Emb(k,n)}x_{\sigma(1)}^{d_1}\cdots x_{\sigma(k)}^{d_k}y_{\sigma(1)}^{I_1}\cdots y_{\sigma(k)}^{I_k}$$
  of $\widetilde{\H}(m,U(n))$ includes the term
  $$(-1)^{k+1+d(I_1,\ldots I_k)}(k-1)!z(1+d_1+\cdots+d_k,I_1\cup\cdots\cup I_k)$$
  where $d(I_1,\ldots I_k)\in\Z/2$ is defined by
  $$(-1)^{d(I_1,\ldots I_k)}y_{1}^{I_1\cup\cdots\cup I_k}=y_{1}^{I_1}\cdots y_{1}^{I_k}.$$
\end{lem}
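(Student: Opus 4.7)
The plan is induction on $k$, using the product $\prod_{j=1}^{k}z(d_j+1,I_j)$ to reconstruct $w$ modulo contributions from non-injective index assignments. Explicitly, I would expand
\[
\prod_{j=1}^{k} z(d_j+1,I_j)=\sum_{\sigma\colon[k]\to[n]}x_{\sigma(1)}^{d_1}\cdots x_{\sigma(k)}^{d_k}y_{\sigma(1)}^{I_1}\cdots y_{\sigma(k)}^{I_k}
\]
and partition the sum according to the set partition $\pi=\{P_1,\ldots,P_l\}$ of $[k]$ recording the fibres of $\sigma$. The fully discrete partition contributes exactly $w$; for $\pi$ with $l<k$ blocks, the corresponding part of the sum is $\epsilon(\pi)\,w_\pi$, where $w_\pi$ is the analogue of $w$ for the collapsed data $D_a=\sum_{j\in P_a}d_j$ and $J_a=\bigcup_{j\in P_a}I_j$, and $\epsilon(\pi)\in\{\pm 1\}$ is the sign produced by shuffling the graded blocks $y_{\sigma(j)}^{I_j}$ into the block-ordered arrangement $\prod_a y_{\iota(a)}^{J_a}$. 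Since the $I_j$ are pairwise disjoint, so are the $J_a$, and $J_1\cup\cdots\cup J_l=I_1\cup\cdots\cup I_k$. Comparing $y_1^{I_1}\cdots y_1^{I_k}=\epsilon(\pi)\,y_1^{J_1}\cdots y_1^{J_l}$ with the defining identity for $d(\cdot)$ gives the compact formula $\epsilon(\pi)=(-1)^{d(I_1,\ldots,I_k)+d(J_1,\ldots,J_l)}$.

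Next, the base case $k=2$ follows from $z(d_1+1,I_1)z(d_2+1,I_2)=w+(-1)^{d(I_1,I_2)}z(d_1+d_2+1,I_1\cup I_2)$. For the inductive step, set $z\coloneqq z(1+d_1+\cdots+d_k,I_1\cup\cdots\cup I_k)$ and assume the lemma for all smaller values (the $l=1$ case gives $w_\pi=z$ with coefficient $1$ trivially). By the inductive hypothesis, each $w_\pi$ with $|\pi|=l$ equals a sum of products of at least two distinct $z(d',J')$'s plus the single summand $(-1)^{l+1+d(J_1,\ldots,J_l)}(l-1)!\,z$. Solving the expansion for $w$ and isolating the coefficient of $z$ yields
\[
-\sum_{l=1}^{k-1}\,\sum_{\pi\colon|\pi|=l}\epsilon(\pi)(-1)^{l+1+d(J_1,\ldots,J_l)}(l-1)!=(-1)^{d(I_1,\ldots,I_k)}\sum_{l=1}^{k-1}(-1)^{l}(l-1)!\genfrac{\{}{\}}{0pt}{0}{k}{l},
\]
where I use $\epsilon(\pi)(-1)^{d(J_1,\ldots,J_l)}=(-1)^{d(I_1,\ldots,I_k)}$ and that $[k]$ admits $\genfrac{\{}{\}}{0pt}{0}{k}{l}$ partitions into $l$ blocks. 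Lemma~\ref{Stirling 2} then reduces the remaining sum to $-(-1)^{k}(k-1)!$, delivering the claimed coefficient $(-1)^{k+1+d(I_1,\ldots,I_k)}(k-1)!$.

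The main obstacle will be the careful tracking of the graded-commutative shuffle sign $\epsilon(\pi)$: because the $y$-blocks have varying degrees $|I_j|$, the individual shuffle signs genuinely depend on which blocks are swapped, and it is only after multiplying by the inductive sign $(-1)^{l+1+d(J_1,\ldots,J_l)}$ and summing over $\pi$ that the result becomes independent of the fine structure of $\pi$, depending only on $l=|\pi|$. Verifying this cancellation is precisely what makes Lemma~\ref{Stirling 2} applicable; once the signs are under control, the calculation goes through cleanly.
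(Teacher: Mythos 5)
Your proof is correct and follows essentially the same route as the paper's: induction on $k$, a direct check of the $k=2$ base case, the expansion of $\prod_j z(d_j+1,I_j)$ grouped by the fibre partition of $\sigma$ into collapsed analogues $w_\pi$, and Lemma~\ref{Stirling 2} to sum the Stirling numbers. Your sign bookkeeping via $\epsilon(\pi)(-1)^{d(J_1,\ldots,J_l)}=(-1)^{d(I_1,\ldots,I_k)}$ is in fact stated a bit more carefully than the paper's (whose penultimate displayed expression is the coefficient in the right-hand side of the partition identity rather than in $w$ itself; the compensating $-1$ from $w=\prod_j z(d_j+1,I_j)-\sum_\pi(\cdots)$, which you track explicitly, is exactly what turns $(-1)^{k+d}$ into the claimed $(-1)^{k+1+d}$).
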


\begin{proof}
  We prove the statement by induction on $k$. For $k=2$, it is easy to see that
  \begin{align*}
    &\sum_{\sigma\in\Emb(2,n)}x_{\sigma(1)}^{d_1}x_{\sigma(2)}^{d_2}y_{\sigma(1)}^{I_1}y_{\sigma(2)}^{I_2}\\
    &=(-1)^{d(I_1,I_2)+1}z(d_1+d_2+1, I_1 \cup I_2)+z(d_1+1,I_1)z(d_2+1,I_2).
  \end{align*}
  Then the statement holds.

  Assume that the statement holds for $<k$. Let
  $$r_i=\sum_{j\in J_i}d_j\quad\text{and}\quad K_i=\bigcup_{j\in J_i}I_j.$$
  Then by a straightforward calculation, we can see that
  \begin{align*}
    &z(1+d_1,I_1)\cdots z(1+d_k,I_k)-\sum_{\sigma\in\Emb(k,n)}x_{\sigma(1)}^{d_1}\cdots x_{\sigma(k)}^{d_k}y_{\sigma(1)}^{I_1}\cdots y_{\sigma(k)}^{I_k}\\
    &=\sum_{s=1}^{k-1}\sum_{\substack{J_1\sqcup\cdots\sqcup J_s=[k] \\ J_1,\ldots,J_s\ne\emptyset}}\sum_\sigma(-1)^{d'(J_1, \ldots J_s)}x_{\sigma(1)}^{r_1}\cdots x_{\sigma(s)}^{r_s}y_{\sigma(1)}^{K_1}\cdots y_{\sigma(s)}^{K_s}
  \end{align*}
  where $d'(J_1, \ldots J_s)\in\Z/2$ is defined by
  $$(-1)^{d'(J_1, \dots J_s)}y_{1}^{K_1}\cdots y_{1}^{K_s}=y_{1}^{I_1}\cdots y_{1}^{I_k}.$$
  By assumption, the coefficient of $z(1+d_1+\cdots+d_k,I_1\cup\cdots\cup I_k)$ in $w$ is
  $$\sum_{s=1}^{k-1}(-1)^{d(I_1, \dots I_k)+s+1}(s-1)!\genfrac{\{}{\}}{0pt}{0}{k}{s}.$$
  Thus the induction is complete by Lemma \ref{Stirling 2}.
\end{proof}

Now we are ready to prove:

\begin{thm}[Theorem \ref{generator intro}]\label{minimal generator U}
  $\H(m,U(n))^{\Sigma_{n}}$ is minimally generated by $\S(m,U(n))$.
\end{thm}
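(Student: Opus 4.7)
The plan is to establish (i) that $\S(m,U(n))$ generates $\H(m,U(n))^{\Sigma_n}$ and (ii) that no proper subset does. Part (ii) is immediate from Lemma \ref{stable range U}: any $T\subsetneq\S(m,U(n))$ fails to contain $\S(m,U(n))$ and hence does not generate. For (i), Theorem \ref{generator U not minimal} reduces the task to showing that every $z(d,I)\in\overline{\S}(m,U(n))$ lies in the subalgebra $R$ generated by $\S(m,U(n))$, and I will proceed by strong induction on the filtration $d+|I|-1$. The base case $d+|I|-1\le n$ is just $z(d,I)\in\S(m,U(n))\subset R$ by definition.

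For the inductive step I assume $d+|I|-1>n$ and that every element of $\overline{\S}(m,U(n))$ of smaller filtration already lies in $R$; I then split into two cases according to whether $|I|$ exceeds $n$. When $|I|>n$, I fix any decomposition $I=I_1\sqcup\cdots\sqcup I_{n+1}$ into $n+1$ non-empty subsets and any non-negative integers $d_1,\ldots,d_{n+1}$ with $d_1+\cdots+d_{n+1}=d-1$. Since $\Emb(n+1,n)=\emptyset$, the element
$$w=\sum_{\sigma\in\Emb(n+1,n)}x_{\sigma(1)}^{d_1}\cdots x_{\sigma(n+1)}^{d_{n+1}}y_{\sigma(1)}^{I_1}\cdots y_{\sigma(n+1)}^{I_{n+1}}$$
is identically zero, while Lemma \ref{least} expresses this same $w$ as $\pm n!\,z(d,I)+P$, with $P$ a combination of products of at least two $z$-elements whose factor filtrations sum to $d+|I|-1$. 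Each individual factor therefore has filtration strictly below $d+|I|-1$, so $P\in R$ by induction, and because $n!$ is invertible in $\F$ we conclude $z(d,I)\in R$.

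When $|I|\le n$ and $d+|I|-1>n$, I first observe that if $d>n$ then $z(d,I)=0$, because $x_j^{d-1}=0$ in $\P(n)_{\Sigma_n}$ by Lemma \ref{coinv} at $k=1$; hence I may assume $d\le n$. Setting $k'\coloneqq n-d+2$ gives $2\le k'\le|I|$, and I fix a decomposition $I=I_1\sqcup\cdots\sqcup I_{k'}$ into $k'$ non-empty subsets. Lemma \ref{coinv} gives $h_{n-k'+1}(x_1,\ldots,x_{k'})=0$ in $\P(n)_{\Sigma_n}$, and multiplying by $y_{\sigma(1)}^{I_1}\cdots y_{\sigma(k')}^{I_{k'}}$ and summing over $\sigma\in\Emb(k',n)$ yields, in $\H(m,U(n))^{\Sigma_n}$,
$$\sum_{d_1+\cdots+d_{k'}=n-k'+1}\sum_{\sigma\in\Emb(k',n)}x_{\sigma(1)}^{d_1}\cdots x_{\sigma(k')}^{d_{k'}}y_{\sigma(1)}^{I_1}\cdots y_{\sigma(k')}^{I_{k'}}=0.$$
Each inner sum is the element $w$ of Lemma \ref{least} with $k=k'$, whose $z(d,I)$-coefficient that lemma identifies as $\pm(k'-1)!$. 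Summing over the $\binom{n}{k'-1}$ compositions of $n-k'+1$ therefore produces total coefficient $\pm\binom{n}{k'-1}(k'-1)!=\pm n!/(n-k'+1)!$, an invertible scalar, while the remaining terms are products of $z$-elements each of filtration strictly less than $d+|I|-1$ and thus in $R$ by induction. Hence $z(d,I)\in R$, closing the induction.

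The step I expect to be the main obstacle is the combinatorial bookkeeping in the second case: I must verify, using Lemma \ref{least} together with the Stirling identity of Lemma \ref{Stirling 2}, that after summing over compositions the $z(d,I)$-contributions truly collapse to $\pm n!/(n-k'+1)!$ and that every remaining cross term is a genuine product of $z$-elements all of strictly smaller filtration, so that the inductive hypothesis applies. Once these identities are in hand, the hypothesis $\mathrm{char}\,\F\nmid n!$ directly delivers invertibility of both $n!$ and $n!/(n-k'+1)!$, and minimality is handed to us by Lemma \ref{stable range U}.
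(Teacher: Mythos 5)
Your proof is correct, and the overall strategy—reduce, via Theorem \ref{generator U not minimal} and Lemma \ref{stable range U}, to showing each $z(d,I)\in\overline{\S}(m,U(n))$ with $d+|I|-1>n$ lies in the subring $R$ generated by $\S(m,U(n))$, by strong induction on $d+|I|-1$—matches the paper's. But your treatment of the inductive step departs from the paper's in a way that is a bit more uniform. For $|I|>n$ the paper invokes the generalized Newton identity (Lemma \ref{Newton}) in its $k>n$ case and separately expresses the auxiliary elements $e(0,J)$ in terms of the $z$'s; you bypass Lemma \ref{Newton} entirely by applying Lemma \ref{least} with $k=n+1$ to the identically vanishing sum over $\Emb(n+1,n)$, which immediately gives $z(d,I)$ with coefficient $\pm n!$ modulo products of lower-filtration $z$'s. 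For $|I|\le n$, the paper keeps the singleton decomposition of $I$ (so $k=|I|$) and must insert an auxiliary factor $x_{\sigma(1)}^{d-n+|I|-2}$ into Lemma \ref{coinv}'s relation to match degrees, while you instead repartition $I$ into $k'=n-d+2$ nonempty blocks so the exponents already sum to $d-1$; your coefficient $\pm n!/(d-1)!$ corresponds to the paper's $\pm n!/(n-|I|+1)!$ under the change of parameterization, and both are units since $\mathrm{char}\,\F\nmid n!$. The bookkeeping you flag as the main risk does check out: the sign $(-1)^{k'+1+d(I_1,\ldots,I_{k'})}$ coming from Lemma \ref{least} is independent of the composition $(j_1,\ldots,j_{k'})$, so the $z(d,I)$-contributions add rather than cancel; and every cross term in the Lemma \ref{least} expansion is a product of at least two $z$-factors whose $(d'+|I'|-1)$-values sum to $d+|I|-1$, so each factor has strictly smaller filtration and the inductive hypothesis applies. (One small point: you invoke Lemma \ref{Stirling 2} directly, but that identity is already absorbed into the proof of Lemma \ref{least}, so you need not re-invoke it here.) The net effect is that both cases become instances of a single pattern—construct a vanishing element and extract $z(d,I)$ from it via Lemma \ref{least}—which is a genuine, if modest, simplification.
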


\begin{proof}
  By Theorem \ref{generator U not minimal} and Lemma \ref{stable range U}, it is sufficient to show that $z(d,I)\in\overline{\S}(m,U(n))$ for $d+|I|\ge n+2$ belongs to a subring of $\H(m,U(n))^{\Sigma_n}$ generated by $\S(m,U(n))$. First, we consider the case $|I|>n$. Applying Lemma \ref{Newton} for $k=1$, one gets that for any $J\subset[m]$, $e(0,J)$ belongs to the subring of $\H(m,U(n))^{\Sigma_n}$ generated by $z(d,I)\in\overline{\S}(m,U(n))$ for $|I|\le n$. One also gets that $z(d,I)\in\S(m,n)$ for $|I|>n$ belongs to the subring of $\H(m,U(n))^{\Sigma_n}$ generated by $e(0,J)$ for $J\subset[m]$ and $z(d,J)\in\S(m,n)$ for $|J|<|I|$. Thus $z(d,I)\in\overline{\S}(m,U(n))$ for $|I|>n$ turn out to belong to the subring of $\H(m,U(n))^{\Sigma_n}$ generated by $z(d,I)\in\overline{\S}(m,U(n))$ for $|I|\le n$.

  Next, we consider the case $|I|\le n$ and $d+|I|\ge n+2$. Let $I=\{i_1<\cdots<i_k\}$ be a subset of $[n]$, and let $d$ be a positive integer with $d+k\ge n+2$. By Lemma \ref{coinv}, there is an identity in $\H(m,U(n))^{\Sigma_n}$
  $$\sum_{j_1+\cdots+j_k=n-k+1}\sum_{\sigma\in\Emb(k,n)} x_{\sigma(1)}^{d-n+k-2}x_{\sigma(1)}^{j_1}\cdots x_{\sigma(1)}^{j_k}y_{\sigma(1)}^{i_1}\cdots y_{\sigma(k)}^{i_k}=0.$$
  On the other hand, it follows from Lemma \ref{least} that the LHS includes a non-zero multiple of $z(d,I)$. Thus $z(d,I)$ turns out to belong to the subring of $\H(m,U(n))^{\Sigma_n}$ generated by $z(d',I')\in\overline{\S}(m,U(n))$ with $d'+|I'|<d+|I|$. Therefore the proof is complete by induction on $d+|I|$.
\end{proof}

Let
$$\S(m,SU(n))\coloneqq\{z(d,I) \in\S(m,U(n))\mid d>1\text{ or }|I|>1\}.$$
Then $\S(m,SU(n))$ is a subset of $\S(m,U(n))$ consisting of elements of degree $>1$. Thus by \eqref{U-SU} and Theorem \ref{minimal generator U}, we obtain the following corollary.

\begin{cor}
  \label{minimal generator SU}
  $\H(m,SU(n))^{\Sigma_{n}}$ is minimally generated by $\S(m,SU(n))$.
\end{cor}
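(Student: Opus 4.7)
The plan is to deduce the corollary from Theorem \ref{minimal generator U} by carefully unpacking the tensor factorization \eqref{U-SU}. First I would rewrite the isomorphism in invariant-theoretic language, observing that
\[
\H(m,U(n))^{\Sigma_n}\cong \H(m,SU(n))^{\Sigma_n}\otimes H^*((S^1)^m;\F),
\]
where the second factor is an exterior algebra $\Lambda(u_1,\ldots,u_m)$ on $m$ generators in degree $1$. The splitting is induced by the central isogeny $SU(n)\times S^1\to U(n)$, so on the level of $\Sigma_n$-invariants the $H^*(T;\F)^{\otimes m}$ part splits off the trace factor.

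Next I would identify the degree-one generators $z(1,\{i\})=y_1^i+\cdots+y_n^i\in\S(m,U(n))$ for $i=1,\ldots,m$ with the exterior generators $u_1,\ldots,u_m$ corresponding to the $S^1$-factor under the splitting. Indeed each $z(1,\{i\})$ is the pullback of the fundamental class of the $i$-th $S^1$ factor in $(S^1)^m\subset U(n)^m$ along the isogeny. Since the removed elements in the definition of $\S(m,SU(n))$ are precisely the degree-one ones, we have
\[
\S(m,U(n))=\S(m,SU(n))\sqcup\{z(1,\{i\})\mid 1\le i\le m\},
\]
and by Theorem \ref{minimal generator U} this is a minimal generating set for $\H(m,U(n))^{\Sigma_n}$.

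Finally, to extract generation and minimality for $SU(n)$, I would quotient out the exterior generators $u_i=z(1,\{i\})$. Concretely, the inclusion $\H(m,SU(n))^{\Sigma_n}\hookrightarrow \H(m,U(n))^{\Sigma_n}$ together with the surjection $\H(m,U(n))^{\Sigma_n}\twoheadrightarrow \H(m,SU(n))^{\Sigma_n}$ obtained by setting $u_1=\cdots=u_m=0$ identifies $\H(m,SU(n))^{\Sigma_n}$ with the subalgebra generated by $\S(m,SU(n))$; generation is immediate from that of $\S(m,U(n))$, and minimality is preserved because if some $z(d,I)\in\S(m,SU(n))$ were expressible in terms of the remaining elements of $\S(m,SU(n))$, the same relation would hold in $\H(m,U(n))^{\Sigma_n}$, contradicting the minimality established in Theorem \ref{minimal generator U}.

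The only subtlety, and therefore the step I would take most care with, is verifying the identification of $z(1,\{i\})$ with the $S^1$-factor generator $u_i$; once this is pinned down, the rest is a direct transport of Theorem \ref{minimal generator U} across \eqref{U-SU}. Everything else is formal, so no new combinatorial input (beyond what was used for $U(n)$) is required.
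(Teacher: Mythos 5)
Your proposal takes essentially the same route as the paper: invoke the factorization \eqref{U-SU}, identify the degree-one generators $z(1,\{i\})$ with the exterior generators coming from the $(S^1)^m$ factor, and then transport Theorem~\ref{minimal generator U} across the tensor decomposition. The paper's proof of this corollary is extremely terse ("Thus by \eqref{U-SU} and Theorem~\ref{minimal generator U}"), and your unpacking is the intended one.

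One step, however, is stated inaccurately. You claim the inclusion and surjection "identify $\H(m,SU(n))^{\Sigma_n}$ with the subalgebra generated by $\S(m,SU(n))$." This is not literally true: under the splitting $\F^n\cong\F^{n-1}\oplus\F$ of the permutation representation into standard plus trivial summands, writing $y_i^j=\bar y_i^j+y^j$ with $\sum_i\bar y_i^j=0$, one finds for instance
\[
z(1,\{1,2\})=\sum_i\bar y_i^1\bar y_i^2+\tfrac{1}{n}\,z(1,\{1\})z(1,\{2\}),
\]
so $z(1,\{1,2\})$ has a nonzero component in the exterior factor, and the subalgebra of $\H(m,U(n))^{\Sigma_n}$ generated by $\S(m,SU(n))$ is strictly larger than the tensor factor $\H(m,SU(n))^{\Sigma_n}\otimes1$. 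What you actually need (and what makes the argument run) is that the \emph{images} of $\S(m,SU(n))$ under the quotient $\H(m,U(n))^{\Sigma_n}\twoheadrightarrow\H(m,SU(n))^{\Sigma_n}$ killing the $z(1,\{i\})$'s generate. Relatedly, a relation among these images does not literally "hold in $\H(m,U(n))^{\Sigma_n}$"; it holds modulo the ideal $(z(1,\{1\}),\ldots,z(1,\{m\}))$, and to convert that into a contradiction with the minimality of $\S(m,U(n))$ one should either invoke a degree count (the correction terms have strictly smaller degree than $z(d,I)$, hence cannot involve it) or, more cleanly, pass to indecomposables and use the splitting $Q(\H(m,U(n))^{\Sigma_n})\cong Q(\H(m,SU(n))^{\Sigma_n})\oplus\F\{z(1,\{1\}),\ldots,z(1,\{m\})\}$. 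These are small repairs; the structure of your argument is correct and matches the paper.
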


We give example calculations of the cohomology of $\Hom(\Z^2,SU(n))_1$ for $n=2,3$.

\begin{exam}\label{SU(2)}
  By Example \ref{example Poincare series}, $\H^*(\Hom(\Z^2,SU(2))_1;\F)$ is non-trivial only for $*=0,2,3$, where $\F$ is a field of characteristic zero or prime to 2. Then all products in $\H^*(\Hom(\Z^2,SU(2))_1;\F)$ are trivial. By Theorem \ref{minimal generator SU}, $\H^*(\Hom(\Z^2,SU(2))_1;\F)$ is generated by
  $$a\coloneqq z(2,\{1\}),\quad b\coloneqq z(2,\{1\}),\quad c\coloneqq z(1,\{1,2\}).$$
  Thus we obtain
  $$\H^*(\Hom(\Z^2,SU(2))_1;\F)=\F\langle a,b,c\rangle/(a,b,c)^2$$
  where $|a|=|b|=3$ and $|c|=2$.
\end{exam}

\begin{exam}\label{SU(3)}
  We assume the ground field $\F$ is of characteristic zero or prime to 6. By Theorem \ref{minimal generator SU}, $\H^*(\Hom(\Z^2,SU(3))_1;\F)$ is generated by
  $$a_i\coloneqq z(i+1,\{1\}),\quad b_i\coloneqq z(i+1,\{1\}),\quad c_i\coloneqq z(i,\{1,2\})$$
  for $i=1,2$. Then $(a_1,a_2,b_1,b_2,c_1,c_2)^3=0$. Moreover, by a direct computation, we can verify that the following are trivial in $\H(2,SU(n))$.
  $$c_1c_2,\quad a_2c_2,\quad b_2c_2,\quad a_2c_1-a_1c_2,\quad b_2c_1-b_1c_2,\quad a_2b_1-b_2a_1.$$
  Let $I$ be the ideal of $\F\langle a_1,a_2,b_1,b_2,c_1,c_2\rangle$ generated by these elements. It is easy to see that the Poincar\'e series of
  $$\F\langle a_1,a_2,b_1,b_2,c_1,c_2\rangle/(a_1,a_2,b_1,b_2,c_1,c_2)^3+I$$
  coincides with that of $\Hom(\Z^2,SU(3))_1$ in Example \ref{example Poincare series}. Thus we obtain
  $$\H^*(\Hom(\Z^2,SU(3))_1;\F)\cong\F\langle a_1,a_2,b_1,b_2,c_1,c_2\rangle/(a_1,a_2,b_1,b_2,c_1,c_2)^3+I.$$
\end{exam}


\subsection{Results for symplectic groups}

By calculations similar to the case of unitary groups, we give a minimal generating set of $\H(m,Sp(n))^{B_n}$. Since the calculations are quite similar, we only state results without proofs.

We assume that the ground field $\F$ is of characteristic zero or prime to $2(n!)$. Then analogously to \eqref{power sum}, we have
$$\P(n)^{B_n}=\F[p_2,p_4,\ldots,p_{2n}].$$
For $d_1,\ldots,d_{2^m-1}\ge 1$, let
$$w(d_1,\ldots,d_{2^m-1})\coloneqq\prod_{k=1}^{2^m-1}\prod_{i=d_1+\cdots+d_{k-1}+1}^{d_1+\cdots+d_k}x_i^{\epsilon(|I_k|)}y_i^{I_k}$$
where
$$\epsilon(k)=\begin{cases}0&(k\text{ even})\\1&(k\text{ odd})\end{cases}$$
and $\emptyset\ne I_1<\cdots<I_{2^m-1}$ is the ordering of $2^{[m]}$ defined above. Let $\mathcal{N}(m,n)$ be the subset of $\widetilde{\H}(m,Sp(n))$ consisting of monic monomials of the form
$$M(x_1,\ldots,x_n)w(d_1,\ldots,d_{2^m-1}).$$
Analogously to Lemmas \ref{basis H} and \ref{monomial}, one can prove the following two lemmas.

\begin{lem}\label{basis H Sp}
  The set
  $$\left\{\sum_{\sigma\in B_n}\sigma(v)\,\middle|\,v\in\mathcal{N}(m,n)\right\}$$
  is a basis of $\widetilde{\H}(m,Sp(n))^{B_n}$.
\end{lem}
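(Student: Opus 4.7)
The plan is to mimic the proof of Lemma \ref{basis H} after first reducing modulo the sign part of $B_n$. Since $B_n = (\Z/2)^n \rtimes \Sigma_n$ and $\F$ has characteristic prime to $2(n!) = |B_n|$, all averaging projectors are well-defined and
$$\widetilde{\H}(m,Sp(n))^{B_n} = \bigl(\widetilde{\H}(m,Sp(n))^{(\Z/2)^n}\bigr)^{\Sigma_n}.$$
The strategy is to identify the $(\Z/2)^n$-invariants first and then apply the same orbit-sum argument as in the unitary case to the residual $\Sigma_n$-action.

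First I would compute the action of $(\Z/2)^n$: a generator $s_i$ reflects $x_i \mapsto -x_i$ and $y_i^j \mapsto -y_i^j$ for every $j$, so for a monic monomial $u = \prod_i x_i^{a_i} y_i^{I_i}$ one has $s_i(u) = (-1)^{a_i+|I_i|} u$. The projector $\tfrac{1}{2^n}\sum_s s$ therefore annihilates every monomial for which some $a_i + |I_i|$ is odd and fixes every \emph{good} monomial, i.e. one with every $a_i+|I_i|$ even. Hence $\widetilde{\H}(m,Sp(n))^{(\Z/2)^n}$ has the good monic monomials as a $\Sigma_n$-stable basis.

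Next I would verify that $\mathcal{N}(m,n)$ is a set of canonical representatives for $\Sigma_n$-orbits of good monic monomials. The exponent $\epsilon(|I_k|)$ in $w(d_1,\ldots,d_{2^m-1})$ is chosen precisely so that, in the $k$-th block of indices, the $x$-exponent coming from $w$ plus $|I_k|$ is even; combined with $M$ contributing only even additional $x$-exponents — the implicit parity constraint needed for $Mw$ to be good — this places $\mathcal{N}(m,n)$ inside the good monomials. The uniqueness statement parallel to \eqref{unique monomial} then follows by the same bookkeeping as in the unitary case: a good monic monomial is brought to canonical form by grouping indices according to their associated $y$-subset, ordering the groups via $I_1<\cdots<I_{2^m-1}$, and selecting a representative $M$ in each block up to $\Sigma_n$.

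Applying the orbit-sum argument of Lemma \ref{basis H} to $V = \widetilde{\H}(m,Sp(n))^{(\Z/2)^n}$ then produces the basis $\{\sum_{\sigma\in\Sigma_n}\sigma(v) \mid v \in \mathcal{N}(m,n)\}$ of $V^{\Sigma_n}$, and since $(\Z/2)^n$ fixes good monomials pointwise, $\sum_{w\in B_n} w(v) = 2^n \sum_{\sigma\in\Sigma_n}\sigma(v)$ for each $v \in \mathcal{N}(m,n)$. Invertibility of $2^n$ in $\F$ converts one basis into the other. The main obstacle I anticipate is pinning down the canonical form in $\mathcal{N}(m,n)$, reconciling the stated description of $M$ with the parity condition required for non-vanishing $B_n$-orbit sums, so that the analog of \eqref{unique monomial} is rigorous; everything else is a routine transcription of Lemma \ref{basis H}.
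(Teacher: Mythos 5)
Your approach --- peeling off the $(\Z/2)^n$ factor via $B_n=(\Z/2)^n\rtimes\Sigma_n$ and then running the $\Sigma_n$ orbit-sum argument of Lemma \ref{basis H} on the ``good'' monomials --- is correct, and is exactly the elaboration the paper leaves implicit when it states that this lemma follows ``analogously to Lemmas \ref{basis H} and \ref{monomial}.'' You are also right to flag the parity constraint: for the statement to be literally true, the monomial $M$ in the definition of $\mathcal{N}(m,n)$ must be a monomial in $x_1^2,\ldots,x_n^2$ (as the paper indeed writes in Lemma \ref{monomial Sp}); otherwise $\sum_{\sigma\in B_n}\sigma(v)=0$ for any $v$ failing the parity condition, and the claimed basis would contain $0$.
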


\begin{lem}
  \label{monomial Sp}
  $\H(m,Sp(n))^{B_n}$ is spanned by elements represented by
  $$\sum_{\sigma\in B_n}\sigma(M(x_1^2,\ldots,x_n^2)w(d_1,\ldots,d_{2^m-1}))$$
  satisfying the following conditions:
  \begin{enumerate}
    \item $M(x_1^2,\ldots,x_n^2)=M(x_1^2,\ldots,x_r^2,0,\ldots,0)$ for $r=d_1+\cdots+d_{2^m-1}$;
    \item if the above representative includes the term
    $$P(x_{d_1+\cdots+d_{i-1}+1}^2,\ldots,x_{d_1+\cdots+d_{i-1}+d_i}^2)Q$$
    where $Q$ does not include  $x_{d_1+\cdots+d_{i-1}+1}^2,\ldots,x_{d_1+\cdots+d_{i-1}+d_i}^2$, then $P$ is symmetric for $|S_i|$ even and anti-symmetric for $|S_i|$ odd.
  \end{enumerate}
\end{lem}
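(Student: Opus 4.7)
The plan is to mirror the proof of Lemma \ref{monomial} step by step, substituting $B_n$ for $\Sigma_n$ and using the even-power-sum presentation $\P(n)^{B_n} = \F[p_2,p_4,\ldots,p_{2n}]$ in place of $\P(n)^{\Sigma_n} = \F[p_1,\ldots,p_n]$. By the same regular-sequence argument used in the proof of Theorem \ref{Ramras-Stafa}, the natural projection $\widetilde{\H}(m,Sp(n))^{B_n} \to \H(m,Sp(n))^{B_n}$ is surjective, so by Lemma \ref{basis H Sp} every element of the target is representable by a linear combination of $B_n$-orbit sums of monomials $Mw \in \mathcal{N}(m,n)$.

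For condition (1), I would split the reduction into two parts. First, I would check that $w(d_1,\ldots,d_{2^m-1})$ is invariant under each coordinate-wise sign change in $B_n$: for $i$ lying in the block with label $S_k$, the factor $x_i^{\epsilon(|S_k|)} y_i^{S_k}$ picks up the sign $(-1)^{\epsilon(|S_k|)+|S_k|} = 1$ since $\epsilon(|S_k|) \equiv |S_k| \pmod 2$. Invariance of $Mw$ under sign changes then forces the monomial $M$ to be a polynomial in $x_1^2,\ldots,x_n^2$, which justifies writing $M$ as $M(x_1^2,\ldots,x_n^2)$. Second, mirroring the unitary case, $M$ is $B_{n-r}$-invariant on the last $n-r$ variables (where $r = d_1+\cdots+d_{2^m-1}$); expanding $M$ in terms of the generators $p_{2j}(x_{r+1},\ldots,x_n)$ of $\P(n-r)^{B_{n-r}}$ and applying the coinvariant relation $p_{2j}(x_{r+1},\ldots,x_n) \equiv -p_{2j}(x_1,\ldots,x_r)$ in $\P(n)_{B_n}$ (which holds because $p_{2j}(x_1,\ldots,x_n)$ is a positive-degree $B_n$-invariant) reduces $M$ to the form $M(x_1^2,\ldots,x_r^2,0,\ldots,0)$.

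For condition (2), within the $i$-th block the $w$-factor is $\prod_j x_j^{\epsilon(|S_i|)} y_j^{S_i}$; since the $x$'s commute with everything and $y_j^{S_i} y_{j'}^{S_i} = (-1)^{|S_i|^2} y_{j'}^{S_i} y_j^{S_i}$, a transposition of two block indices multiplies this piece by $(-1)^{|S_i|}$. Hence invariance of $Mw$ under such transpositions forces the associated polynomial factor $P$ to be symmetric when $|S_i|$ is even and antisymmetric when $|S_i|$ is odd, matching the claim.

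The only genuinely new ingredient beyond the $\Sigma_n$ argument is the sign-change verification at the start of (1); the definition of $\epsilon$ was tuned precisely to make that cancellation hold, so once that compatibility is explicitly noted the rest of the proof is a direct transcription of Lemma \ref{monomial}, with the extra bookkeeping around the sign-change subgroup of $B_n$ being the only place where care is needed.
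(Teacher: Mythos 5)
Your proposal is correct and follows the approach the paper intends (the paper states Lemma \ref{monomial Sp} without proof, calling it analogous to Lemma \ref{monomial}); in particular you correctly identify and verify the one genuinely new ingredient, namely that the exponents $\epsilon(|S_k|)$ were chosen precisely so that $w(d_1,\ldots,d_{2^m-1})$ is fixed by the sign-change subgroup $(\Z/2)^n\subset B_n$, which forces the polynomial factor into even powers, after which the reduction via $\P(n)^{B_n}=\F[p_2,\ldots,p_{2n}]$ and the parity count $(-1)^{|S_i|^2}=(-1)^{|S_i|}$ for the block transpositions transcribe the unitary argument verbatim.
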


For $\emptyset\ne I\subset[m]$ and $d\ge 1$, let
$$w(d,I)\coloneqq x_1^{2d+\epsilon(|I|)-2}y_1^I+\cdots+x_n^{2d+\epsilon(|I|)-2}y_n^I$$
and let
$$\overline{\S}(m,Sp(n))\coloneqq\{w(d,I)\mid d\ge 1\text{ and }\emptyset\ne I\subset[m]\}.$$
We can define an ordering on $\mathcal{N}(m,n)$ analogously to that on $\mathcal{M}(m,n)$. Then by arguing as in the proof of Theorem \ref{generator U not minimal}, one gets:

\begin{thm}
  \label{generator not minimal Sp}
  $\H(m,Sp)^{B_n}$ is generated by $\overline{\S}(m,Sp(n))$.
\end{thm}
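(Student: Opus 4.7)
The plan is to mimic the proof of Theorem \ref{generator U not minimal} step by step, replacing each $\Sigma_n$-ingredient by its $B_n$-analog. Given $w \in \H(m,Sp(n))^{B_n}$, I would first invoke Lemma \ref{monomial Sp} to represent $w$ by some $\widetilde w \in \widetilde\H(m,Sp(n))^{B_n}$ of the prescribed form $\sum_{\sigma \in B_n} \sigma\bigl(M(x_1^2,\ldots,x_n^2)\,w(d_1,\ldots,d_{2^m-1})\bigr)$. I would then define an ordering on $\mathcal N(m,n)$ by the same recipe used for $\mathcal M(m,n)$ (total degree, exterior degree, exterior-part lexicographic, polynomial-part lexicographic), so that the least term of $\widetilde w$ is unambiguously defined, and the goal becomes to reproduce this least term by a polynomial in $\overline\S(m,Sp(n))$ and eliminate it inductively.

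The heart of the argument is to verify that every admissible least term is realized. For this, I would use the identity $\P(n)^{B_n} = \F[p_2,p_4,\ldots,p_{2n}]$ (which is valid since the characteristic is prime to $2(n!)$) in place of \eqref{power sum}; this handles the $|S_i|$-even factors of $M$ in Lemma \ref{monomial Sp}. For the $|S_i|$-odd factors, where $M$ must be anti-symmetric in the relevant $x_j^2$'s, I would establish the $B_n$-analog of Lemma \ref{anti-symmetric}: writing
\[
w_{k_1,\ldots,k_n} \coloneqq \sum_{\sigma \in \Sigma_n} \sgn(\sigma)\, x_{\sigma(1)}^{2k_1}\cdots x_{\sigma(n)}^{2k_n},
\]
one checks by a direct expansion that $w(k_1+1,I)\cdots w(k_n+1,I) = w_{k_1,\ldots,k_n}\, x_1 x_2 \cdots x_n\, y_1^I \cdots y_n^I$ when $|I|$ is odd, and the analogous symmetric identity when $|I|$ is even; this shows every (anti-)symmetric block in $x_j^2$ can be absorbed into a product of $w(d,I)$'s.

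Next I would prove the analog of Lemma \ref{least term}: under the ordering on $\mathcal N(m,n)$, the least term of $w(d_1,I_1)\cdots w(d_k,I_k)$ with $I_1 \le \cdots \le I_k$ is $x_1^{2d_1+\epsilon(|I_1|)-2}\cdots x_k^{2d_k+\epsilon(|I_k|)-2}\, y_1^{I_1} \cdots y_k^{I_k}$. Once this is in hand, the same argument as in the proof of Theorem \ref{generator U not minimal} applies: the least term of $\widetilde w$ equals the least term of a suitable $v$ in the subring generated by $\overline\S(m,Sp(n))$, and by $B_n$-invariance the full $B_n$-averages of these least terms coincide, so $\widetilde w - v$ has strictly smaller least term and the induction closes.

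The main obstacle is bookkeeping the parity correction $\epsilon(|I|)$ throughout; in particular, the anti-symmetric analog of Lemma \ref{anti-symmetric} mixes even and odd $x_j$-powers, and one must check that the Vandermonde-style determinantal manipulation still produces the claimed product with the correct signs and that it interacts correctly with the exterior part $y_j^I$ under the $B_n$-action. All other steps are purely formal transcriptions of the $\Sigma_n$-case, so once the anti-symmetric identity is in place the conclusion follows.
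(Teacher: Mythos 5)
Your proposal is correct and follows essentially the same route the paper intends. The paper only says ``We can define an ordering on $\mathcal{N}(m,n)$ analogously to that on $\mathcal{M}(m,n)$. Then by arguing as in the proof of Theorem~\ref{generator U not minimal}, one gets:''—and your proposal does exactly this, spelling out the $B_n$-analogs of Lemmas~\ref{anti-symmetric} and \ref{least term}, replacing \eqref{power sum} by $\P(n)^{B_n}=\F[p_2,\ldots,p_{2n}]$, and tracking the parity factor $x_i^{\epsilon(|I|)}$ that makes each block sign-invariant; the identity $w(k_1+1,I)\cdots w(k_n+1,I)=w_{k_1,\ldots,k_n}\,x_1\cdots x_n\,y_1^I\cdots y_n^I$ for $|I|$ odd checks out and plays precisely the role of Lemma~\ref{anti-symmetric}.
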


Then it remains to find a minimal generating set of $\H(m,Sp(n))^{B_n}$ contained in $\overline{\S}(m,Sp(n))$. Quite similarly to Lemma \ref{base} (cf. \cite[Section 10, Proposition 3]{F}), we can prove the following.

\begin{lem}
  \label{basis H Sp}
  $\P(n)_{B_n}$ has a base
  $$\{x_1^{2i_1+\epsilon_1}\cdots x_{n-1}^{2i_{n-1}+\epsilon_{n-1}}x_{n}^{\epsilon_{n}}\mid i_k\le n-k\text{ and }\epsilon_k=0,1\}.$$
\end{lem}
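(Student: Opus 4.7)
The plan is to reduce the computation of $\P(n)_{B_n}$ to the symmetric-group coinvariants already handled in Lemma \ref{base}, exploiting the semidirect product structure $B_n = (\Z/2)^n \rtimes \Sigma_n$. The key observation is that the subring $\F[x_1^2,\ldots,x_n^2] \subset \P(n)$ is precisely the invariant subring for the $(\Z/2)^n$-action, while the residual $\Sigma_n$-action permutes $x_1^2,\ldots,x_n^2$. Consequently
$$\P(n)^{B_n} = \F[x_1^2,\ldots,x_n^2]^{\Sigma_n},$$
so the $B_n$-invariant ideal in $\P(n)$ is generated by the symmetric functions in the squared variables.

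Next I would use that $\P(n)$ is a free $\F[x_1^2,\ldots,x_n^2]$-module with basis $\{x_1^{\epsilon_1}\cdots x_n^{\epsilon_n}\mid \epsilon_i\in\{0,1\}\}$. Letting $J$ denote the positive-degree part of $\F[x_1^2,\ldots,x_n^2]^{\Sigma_n}$, this freeness gives a direct-sum decomposition $J\cdot\P(n) = \bigoplus_\epsilon (J\cdot\F[x_1^2,\ldots,x_n^2])\cdot x^\epsilon$, whence
$$\P(n)_{B_n} \cong \F[x_1^2,\ldots,x_n^2]_{\Sigma_n} \otimes_\F \mathrm{span}\{x_1^{\epsilon_1}\cdots x_n^{\epsilon_n}\mid \epsilon_i\in\{0,1\}\}.$$
Since $\F[x_1^2,\ldots,x_n^2]$ is a polynomial ring on the variables $x_i^2$ with the natural $\Sigma_n$-action, Lemma \ref{base} applies (with $x_i$ replaced by $x_i^2$) to yield the basis $\{x_1^{2i_1}\cdots x_{n-1}^{2i_{n-1}}\mid i_k\le n-k\}$ for $\F[x_1^2,\ldots,x_n^2]_{\Sigma_n}$. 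Multiplying these by $x_1^{\epsilon_1}\cdots x_n^{\epsilon_n}$ produces exactly the set claimed in the lemma.

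The only step requiring any care is the module decomposition of $J\cdot\P(n)$ used above, which is valid precisely because $J$ is contained in the subring $\F[x_1^2,\ldots,x_n^2]$ over which $\P(n)$ is free; everything else is bookkeeping. As a sanity check, the proposed basis has cardinality $2\cdot\prod_{k=1}^{n-1}2(n-k+1) = 2^n\cdot n! = |B_n|$, which matches $\dim_\F\P(n)_{B_n}$ by the Shephard-Todd-Chevalley theorem, so spanning already forces linear independence.
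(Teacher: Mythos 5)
Your proof is correct. The paper itself gives no proof of this lemma, only the remark that it follows ``quite similarly to Lemma~\ref{base}'' (which cites Fulton's direct Schubert-basis argument), so the intended route is presumably a direct adaptation of Fulton's argument to the $B_n$ case. Your route is genuinely different and more modular: you use the semidirect-product structure $B_n\cong(\Z/2)^n\rtimes\Sigma_n$ to observe that $\P(n)^{B_n}=\F[x_1^2,\ldots,x_n^2]^{\Sigma_n}$, that $\P(n)$ is free over $R\coloneqq\F[x_1^2,\ldots,x_n^2]$ with basis the $2^n$ square-free monomials, and that the ideal generated by the positive-degree invariants therefore splits as a direct sum over this basis. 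This reduces the $B_n$ coinvariants to a tensor product $R_{\Sigma_n}\otimes_\F\operatorname{span}\{x^\epsilon\}$, to which Lemma~\ref{base} (applied to the variables $x_i^2$) applies directly. What this buys is that you get the lemma as a formal consequence of the already-established $\Sigma_n$ case rather than reproving the Schubert-calculus fact from scratch; the only non-formal input is the freeness of $\P(n)$ over $R$, which is elementary. The cardinality sanity check against $|B_n|$ via Shephard--Todd--Chevalley is a nice redundancy, though not needed since the direct-sum decomposition already yields a basis.
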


We define a subset of $\overline{\S}(m,Sp(n))$ by
$$\S(m,Sp(n))\coloneqq\{w(d,I) \in\overline{\S}(m,Sp(n))\mid 2d+|I|+\epsilon(|I|)-2\leq 2n\}.$$
Similarly to Theorem \ref{low dim}, we can prove the following.

\begin{thm}
  \label{low dim Sp}
    The map $\F\langle\S(m,Sp(n))\rangle\to\H(m,Sp(n))^{B_n}$ is an isomorphism in dimension $\le 2n+1$.
\end{thm}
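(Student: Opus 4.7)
The plan is to adapt the proof of Theorem \ref{low dim} verbatim, substituting the $B_n$-equivariant structures in place of the $\Sigma_n$-equivariant ones. First, I would define an analogue of the subspace $V(m,n)$, namely the span in $\F\langle\overline{\S}(m,Sp(n))\rangle$ of monomials $w(d_1,I_1)\cdots w(d_k,I_k)$ with $I_1<\cdots<I_k$ and with $(d_1,\dots,d_k)$ satisfying an appropriate bound (depending on $n$ and on the parities $\epsilon(|I_i|)$) chosen so that the ``least term'' below becomes an element of the basis supplied by Lemma \ref{basis H Sp}. Theorem \ref{generator not minimal Sp} already handles surjectivity onto $\H(m,Sp(n))^{B_n}$, so the whole content is to prove injectivity of $\F\langle\S(m,Sp(n))\rangle\to\H(m,Sp(n))^{B_n}$ in degrees $\le 2n+1$.

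The key technical step is the least-term computation, i.e.\ the $Sp(n)$-analogue of Lemma \ref{least term}. After defining an ordering on $\mathcal{N}(m,n)$ parallel to the one on $\mathcal{M}(m,n)$ (sort by total degree, then by exterior-algebra degree, then by the $d_i$ as before, then lex on $\P(n)$), a straightforward expansion shows that the least term of $w(d_1,I_1)\cdots w(d_k,I_k)$ with $I_1<\cdots<I_k$ is
\[
x_1^{2d_1+\epsilon(|I_1|)-2}\cdots x_k^{2d_k+\epsilon(|I_k|)-2}\,y_1^{I_1}\cdots y_k^{I_k}.
\]
Combining Lemma \ref{basis H Sp} with the exterior factors $\bigotimes_i\Lambda(y_1^i,\dots,y_n^i)$ produces a basis of $\H(m,Sp(n))$ indexed by a $B_n$-coset choice $\alpha$, exactly as in the construction of $\overline{\mathcal B}(\alpha)$. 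Provided the bound defining $V(m,n)$ forces each exponent $2d_i+\epsilon(|I_i|)-2$ into the allowed range $\{0,1,\dots,2(n-i)+1\}$ (with the tightened range $\{0,1\}$ at position $n$), the least terms of distinct products in $V(m,n)$ sit in distinct basis elements. A $B_n$-invariance argument then gives injectivity on $V(m,n)$, mirroring Lemma \ref{inj}.

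The main obstacle, and the place where the $Sp(n)$-case genuinely differs from $U(n)$, is matching the numerical constraint $2d+|I|+\epsilon(|I|)-2\le 2n$ defining $\S(m,Sp(n))$ with the $V(m,n)$-membership condition in the low degree range $\le 2n+1$. The parity correction $\epsilon(|I|)$, which has no counterpart in the unitary case, alters the total degree of $w(d,I)$ depending on whether $|I|$ is even or odd, and the final position in the basis of Lemma \ref{basis H Sp} is asymmetric (only $x_n^0,x_n^1$ are allowed rather than $x_n^0,\dots,x_n^1$ coupled with higher even powers). I would handle this by a case split on the parities of $|I_i|$: in every dimension up to $2n+1$, the constraint from $\S(m,Sp(n))$ forces $k\le n$ and gives each $d_i+i\le n+1$, so the least term is indeed a valid basis element. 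Once this degree bookkeeping is verified, the inclusion $\F\langle\S(m,Sp(n))\rangle\hookrightarrow\F\langle\overline{\S}(m,Sp(n))\rangle$ is bijective in the relevant range and lands in $V(m,n)$, and the theorem follows.
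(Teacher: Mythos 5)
Your proposal is correct and follows the same route the paper intends: the paper omits the proof with the remark ``Similarly to Theorem \ref{low dim}, we can prove the following,'' and your outline is exactly that verbatim adaptation. You correctly identify the three ingredients: the $B_n$-analogue $W(m,n)$ of the subspace $V(m,n)$ (which the paper in fact defines a few lines later), an $Sp(n)$-analogue of Lemma \ref{least term} computing the least term as $x_1^{2d_1+\epsilon(|I_1|)-2}\cdots x_k^{2d_k+\epsilon(|I_k|)-2}y_1^{I_1}\cdots y_k^{I_k}$ with respect to the ordering on $\mathcal{N}(m,n)$, and an $Sp(n)$-analogue of Lemma \ref{inj} built from the basis in Lemma \ref{basis H Sp}. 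Combined with the surjectivity in Theorem \ref{generator not minimal Sp} and a degree count, this gives the theorem, and the bookkeeping does go through: if all factors lie in $\S(m,Sp(n))$ and the total degree is at most $2n+1$, then $k+\max\{d_i\}-1\le n$, so the exponents $2d_i+\epsilon(|I_i|)-2$ land in the allowed ranges from Lemma \ref{basis H Sp}.

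One small imprecision worth flagging: you write that ``the constraint from $\S(m,Sp(n))$ forces $k\le n$ and gives each $d_i+i\le n+1$.'' The per-factor condition $2d+|I|+\epsilon(|I|)-2\le 2n$ does not by itself bound $k$ or yield $d_i+i\le n+1$; it is the \emph{total-degree} restriction ($\le 2n+1$) together with $\S$-membership that forces the $W(m,n)$-condition $k+\max\{d_i\}-1\le n$, from which $d_i+i\le n+1$ follows for every position $i\le k$. Your later sentence (``Once this degree bookkeeping is verified\dots'') shows you have the right mechanism in mind, but the earlier phrasing attributes the bound to the wrong source. Also, the case split on parities of $|I_i|$ is not strictly necessary: the uniform inequality $\epsilon(|I_i|)\le 1$ already closes the gap between the exponent bound $2d_i+\epsilon_i-2$ and the allowed range $\{0,\ldots,2(n-i)+1\}$, so you can avoid the split entirely.
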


Let $\S(m,SO(2n+1))\coloneqq\S(m,Sp(n))$. Then by Theorems \ref{cohomology SO Sp} and \ref{low dim Sp}, we get:

\begin{cor}
  \label{low dim SO}
  The map $\F\langle\S(m,SO(2n+1))\rangle\to\H(m,SO(2n+1))^{B_n}$ is an isomorphism in dimension $\le 2n+1$.
\end{cor}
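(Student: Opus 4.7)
The plan is to deduce the statement directly from the two results cited before it: the ring isomorphism of Theorem \ref{cohomology SO Sp} and the structural Theorem \ref{low dim Sp} for $Sp(n)$.

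First, I would examine the source of the isomorphism in Theorem \ref{cohomology SO Sp}. Its proof observes that the reflection representations of the Weyl groups of $Sp(n)$ and $SO(2n+1)$ over $\F$ (with $\mathrm{char}\,\F$ zero or prime to $2(n!)$) are isomorphic as reflection representations of $B_n$. By Baird's theorem (Theorem \ref{Baird}), together with the compatibility of the isomorphism of reflection representations with the construction of the $W$-module $\H(m,G)$ recalled at the start of Section \ref{Cohomology generators}, one obtains a canonical isomorphism of bigraded algebras
\[
\Phi\colon \H(m,Sp(n))^{B_n}\xrightarrow{\ \cong\ }\H(m,SO(2n+1))^{B_n},
\]
which moreover preserves the obvious polynomial/exterior bidegree (since the identification of the rank-$n$ reflection representations is linear).

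Next, by definition we have set $\S(m,SO(2n+1))\coloneqq\S(m,Sp(n))$, so under $\Phi$ the free commutative graded algebras $\F\langle\S(m,Sp(n))\rangle$ and $\F\langle\S(m,SO(2n+1))\rangle$ are literally the same object and map to their respective invariant rings compatibly. Concretely, the diagram
\[
\xymatrix{
\F\langle\S(m,Sp(n))\rangle \ar[r]\ar@{=}[d] & \H(m,Sp(n))^{B_n}\ar[d]^{\Phi}_{\cong}\\
\F\langle\S(m,SO(2n+1))\rangle\ar[r] & \H(m,SO(2n+1))^{B_n}
}
\]
commutes, because both horizontal maps are induced by sending each generator $w(d,I)$ to the same element of the invariant ring (via $\Phi$).

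The conclusion is then immediate: by Theorem \ref{low dim Sp} the top horizontal map is an isomorphism in total degree $\le 2n+1$, and $\Phi$ is a (bi)graded isomorphism, so the bottom horizontal map is an isomorphism in the same range. I do not expect any serious obstacle here; the only subtle point is making the identification of reflection representations $B_n\curvearrowright \F^n$ (standard permutation-with-signs on $Sp(n)$ vs.\ the short-root representation on $SO(2n+1)$) explicit enough that $\Phi$ is manifestly bigraded, but this is already tacitly used in the proof of Theorem \ref{cohomology SO Sp} and requires no genuine calculation.
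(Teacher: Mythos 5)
Your proposal is correct and is precisely the paper's argument: the paper defines $\S(m,SO(2n+1))\coloneqq\S(m,Sp(n))$ and then cites Theorems \ref{cohomology SO Sp} and \ref{low dim Sp} exactly as you do, relying on the isomorphism of $B_n$-reflection representations to transport the $Sp(n)$ result. You have merely spelled out the implicit commuting square, which is fine but not a new idea.
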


We define
$$W(m,n)\coloneqq\{w(d_1,I_1)\cdots w(d_k,I_k)\mid I_1<\cdots<I_k\text{ and }k+\max\{d_1,\ldots,d_k\}-1\le n\}.$$
By the same argument as Lemma \ref{stable range U}, we can prove:

\begin{lem}\label{stable range Sp}
  $\S(m,Sp(n))$ is included in every subset of $\overline{\S}(m,Sp(n))$ that generates $\H(m,Sp(n))^{B_n}$.
\end{lem}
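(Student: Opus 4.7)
The plan is to mimic the proof of Lemma~\ref{stable range U} line by line, with $V(m,n)$ replaced by $W(m,n)$ and with the role of the weight $d+|I|-1$ of $z(d,I)$ played by the weight $2d+|I|+\epsilon(|I|)-2$ of $w(d,I)$. This weight is precisely the word length, with respect to the variables $x_i$ and $y_i^j$, of each monomial summand $x_s^{2d+\epsilon(|I|)-2}y_s^I$ of $w(d,I)$ in $\widetilde{\H}(m,Sp(n))$.

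First, I would establish the symplectic analog of Lemma~\ref{inj}, namely that the projection
\[
\F\langle\overline{\S}(m,Sp(n))\rangle\to\H(m,Sp(n))^{B_n}
\]
is injective on $W(m,n)$. Equip $\mathcal{N}(m,n)$ with the obvious ordering analogous to the one on $\mathcal{M}(m,n)$, and repeat the least-term calculation of Lemma~\ref{least term}: the product $w(d_1,I_1)\cdots w(d_k,I_k)\in W(m,n)$ with $I_1<\cdots<I_k$ has least term
\[
x_1^{2d_1+\epsilon(|I_1|)-2}\cdots x_k^{2d_k+\epsilon(|I_k|)-2}y_1^{I_1}\cdots y_k^{I_k}.
\]
The condition $k+\max\{d_j\}-1\le n$ in the definition of $W(m,n)$ forces $d_j-1\le n-j$, so the polynomial part of this least term is a basis element of $\P(n)_{B_n}$ from Lemma~\ref{basis H Sp}. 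Since distinct elements of $W(m,n)$ yield distinct least terms, linear independence in $\H(m,Sp(n))^{B_n}$ follows.

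Second, define the filtration $\{F_i\}_{i\ge 0}$ of $\H(m,Sp(n))^{B_n}$ with $F_i$ the subspace spanned by products $w(d_1,I_1)\cdots w(d_k,I_k)$ of total weight $\sum_j(2d_j+|I_j|+\epsilon(|I_j|)-2)\le i$. This is induced from the word-length filtration on $\widetilde{\H}(m,Sp(n))^{B_n}$, descending to the quotient because the defining ideal is generated by the $B_n$-invariants $p_{2j}$, which are word-length homogeneous of length $2j$. Consequently $F_i\cdot F_j\subset F_{i+j}$, and $\S(m,Sp(n))\subset W(m,n)\cap F_{2n}$ directly from the defining inequality $2d+|I|+\epsilon(|I|)-2\le 2n$ of $\S(m,Sp(n))$. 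If some $w(d,I)\in\S(m,Sp(n))$ of weight $i=2d+|I|+\epsilon(|I|)-2$ were expressible by the remaining elements of $\S(m,Sp(n))$, it would lie in $F_{i-1}\cap W(m,n)$, contradicting the injectivity above.

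The main obstacle I expect is the first step: the least-term computation must handle the extra parity factor $\epsilon(|I_j|)$ and simultaneously keep the polynomial exponents within the restricted ranges of the $\P(n)_{B_n}$-basis from Lemma~\ref{basis H Sp}, requiring a small case split on the parity of each $|I_j|$. Once this least-term injectivity is in place, the filtration argument is a direct transcription of the unitary proof.
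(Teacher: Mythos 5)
Your proposal is correct and takes essentially the same approach as the paper, which gives no detailed proof and simply invokes ``the same argument as Lemma \ref{stable range U}''; you have filled in exactly the details that mimicry requires, including the right word-length weight $2d+|I|+\epsilon(|I|)-2$, the unstated symplectic analog of Lemma \ref{inj} via least terms and the basis of Lemma \ref{basis H Sp}, and the verification that $\S(m,Sp(n))\subset W(m,n)\cap F_{2n}$.
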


Thus by using Lemmas \ref{Newton}, similarly to Theorem \ref{minimal generator U}, we finally obtain the following.

\begin{thm}
  \label{minimal generator Sp}
  $\H(m,Sp(n))^{B_n}$ is minimally generated by $\S(m,Sp(n))$.
\end{thm}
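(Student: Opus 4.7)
The plan is to parallel the proof of Theorem \ref{minimal generator U} with modifications appropriate to the $B_n$-action. Two pieces are already in hand: Theorem \ref{generator not minimal Sp} shows that $\overline{\S}(m,Sp(n))$ generates $\H(m,Sp(n))^{B_n}$, and Lemma \ref{stable range Sp} shows that any generating subset of $\overline{\S}(m,Sp(n))$ must contain all of $\S(m,Sp(n))$. So the remaining task is to prove that every $w(d,I) \in \overline{\S}(m,Sp(n))$ with $2d+|I|+\epsilon(|I|)-2 > 2n$ lies in the subring of $\H(m,Sp(n))^{B_n}$ generated by $\S(m,Sp(n))$. I would split this into two cases exactly as in the unitary argument: first $|I| > n$, and then $|I|\le n$ with $d$ forced to be large.

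For $|I| > n$, I would apply Lemma \ref{Newton}, which was proved in $\widetilde{\H}(m,U(n))$ but depends only on the polynomial-exterior structure and not on the $\Sigma_n$-action, so it passes directly to $\widetilde{\H}(m,Sp(n))$. Taking $k=1$ there gives an identity expressing $e(0,J)$ for $|J|>n$ in terms of products of elements of the form $z(d,I)$ with $|I|\le n$; after substituting $x_i \mapsto x_i^{2}$ or $x_i^{2}\cdot x_i$ as dictated by $\epsilon(|I|)$ and projecting onto $B_n$-invariants, the $z(d,I)$ on the right become $w(d,I)$'s. An induction on $|I|$ then expresses every $w(d,I)$ with $|I|>n$ via those with $|I|\le n$.

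For $|I| \le n$ with $2d+|I|+\epsilon(|I|)-2>2n$, I would use the $B_n$-analog of Lemma \ref{coinv} supplied by Lemma \ref{basis H Sp}: the basis listed there yields, for each $k\le n$, a relation of the shape
\[
  \sum_{j_1+\cdots+j_k=n-k+1} x_1^{2j_1+\epsilon(|I_1|)}\cdots x_k^{2j_k+\epsilon(|I_k|)} \equiv 0 \quad \text{in } \P(n)_{B_n}.
\]
Multiplying this relation by $x_i^{2d-2(n-k+1)}$ and $y_{\sigma(1)}^{i_1}\cdots y_{\sigma(k)}^{i_k}$, and then summing over $\sigma\in\Emb(k,n)$ via the $B_n$-action, produces an identity in $\H(m,Sp(n))^{B_n}$. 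By the $Sp$-analog of Lemma \ref{least} (which goes through because the sign cancellation in Lemma \ref{Stirling 2} is independent of whether $\sigma$ ranges over $\Sigma_n$ or $B_n$), this identity contains a nonzero scalar multiple of $w(d,I)$ together with products $w(d_1,I_1)\cdots w(d_l,I_l)$ for which $2d_\alpha+|I_\alpha|+\epsilon(|I_\alpha|)-2$ is strictly smaller. A double induction on $|I|$ and then $d$ completes the reduction.

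The main obstacle will be verifying the analog of Lemma \ref{least} in the $Sp$-setting: one must track the parity shift $\epsilon(|I|)$ throughout the expansion of products $w(d_1,I_1)\cdots w(d_k,I_k)$ to confirm that the resulting exponents of the $x_i$ remain of the correct parity, and one must check that the $B_n$-signs do not cause the Stirling-number coefficient from Lemma \ref{Stirling 2} to vanish. Once this bookkeeping is in place, the rest of the argument is a direct transcription of the unitary case, so I expect the bulk of the work to be the careful setup rather than any new combinatorial idea.
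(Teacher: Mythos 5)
Your proposal matches the paper's intended proof, which is only stated implicitly: the paper merely says ``by using Lemma~\ref{Newton}, similarly to Theorem~\ref{minimal generator U}'' and gives no separate argument. Your breakdown into the cases $|I|>n$ (via Lemma~\ref{Newton}) and $|I|\le n$ with $d$ large (via a $B_n$-analog of Lemmas~\ref{coinv} and~\ref{least}), combined with Theorem~\ref{generator not minimal Sp} and Lemma~\ref{stable range Sp}, is exactly the transcription of the unitary argument the paper has in mind, and your flagging of the parity bookkeeping coming from $\epsilon(|I|)$ is precisely the only genuinely new check required.
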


The following is immediate from Theorems \ref{cohomology SO Sp} and \ref{minimal generator Sp}.

\begin{cor}
  \label{minimal generator SO}
  $\H(m,SO(2n+1))^{B_n}$ is minimally generated by $\S(m,SO(2n+1))$.
\end{cor}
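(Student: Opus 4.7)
The plan is to derive this corollary as an immediate consequence of the two preceding results, namely Theorem \ref{cohomology SO Sp} and Theorem \ref{minimal generator Sp}. By Theorem \ref{cohomology SO Sp}, over a field $\F$ of characteristic zero or prime to $2(n!)$, there is an isomorphism of graded $\F$-algebras
\[
  \H(m,Sp(n))^{B_n} \cong \H(m,SO(2n+1))^{B_n},
\]
coming from Baird's theorem together with the fact that the Weyl groups of $Sp(n)$ and $SO(2n+1)$ are isomorphic as reflection groups over $\F$. The isomorphism is compatible, on the nose, with the canonical representation of $B_n$ on $\P(n)\otimes\bigotimes_{i=1}^m\Lambda(y_1^i,\ldots,y_n^i)$, so it sends each element $w(d,I)$ on the $Sp(n)$ side to the element defined by the same formula on the $SO(2n+1)$ side.

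Consequently, since $\S(m,SO(2n+1))$ is defined to equal $\S(m,Sp(n))$, the isomorphism carries the minimal generating set of Theorem \ref{minimal generator Sp} bijectively to $\S(m,SO(2n+1))$. A minimal generating set is preserved under any ring isomorphism, so $\S(m,SO(2n+1))$ minimally generates $\H(m,SO(2n+1))^{B_n}$. There is no real obstacle here; the only point to be careful about is to check that the isomorphism of Theorem \ref{cohomology SO Sp} really does identify the elements $w(d,I)$ on the two sides, which is immediate from the fact that both arise from the same Weyl group action on the same underlying polynomial tensor exterior algebra.
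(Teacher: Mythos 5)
Your proof is correct and follows exactly the paper's route: combine the isomorphism of Theorem \ref{cohomology SO Sp} with Theorem \ref{minimal generator Sp}, using that $\S(m,SO(2n+1))$ is by definition $\S(m,Sp(n))$. The paper states this as immediate; your extra remarks about the isomorphism matching the elements $w(d,I)$ on the nose are a harmless elaboration of the same argument.
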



\section{Homological stability}\label{Homological stability}

Throughout this section, let $G_n$ be either $U(n),SU(n),Sp(n)$ or $SO(2n+1)$. Then there is a sequence of spaces
$$\Hom(\Z^m,G_1)_1\to\Hom(\Z^m,G_2)_1\to\Hom(\Z^m,G_3)_1\to\cdots.$$
In this section, we prove that this sequence satisfies homological stability with rational coefficients and give the best possible stable range. For $d\ge 1$ and $1\le k\le n$, we define a set
$$U(d,k)\coloneqq\{z(d_1,I_1)\cdots z(d_k,I_k)\mid \max\{d_1,\ldots,d_k\}=d\}.$$

\begin{lem}
  \label{dim 2n-m+1}
  If $m+1\le k\le n$ and $d\ge 2$, then least degree elements of $U(d,k)$ are
  $$a_{d,k}(i,I_1,\ldots,I_k)\coloneqq z(d,\{i\})\prod_{j=1}^mz(1,\{j\})\prod_{l=1}^{k-m-1}z(1,I_l)$$
  where $i\in[m]$ and $|I_l|=2$ for all $l$. Otherwise, $U(d,k)=\{0\}$ or least degree elements of $U(d,k)$ are of degree $>2d+2k-m-3$.
%
%
\end{lem}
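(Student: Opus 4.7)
The plan is to minimize $D(w) = 2\sum_l d_l + \sum_l |I_l| - 2k$ over products $w = z(d_1,I_1) \cdots z(d_k,I_k)$ satisfying $\max_l d_l = d$ and $w \neq 0$ in $\H(m,U(n))^{\Sigma_n}$. The central algebraic input, which I would record first, is that for each fixed $I \subseteq [m]$ the $d$-values appearing in the subproduct $\prod_{l : I_l = I} z(d_l, I)$ must be pairwise distinct positive integers; this follows from a Vandermonde-type calculation generalizing the observation $z(d, \{j\})^2 = 0$.

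I would then bound $\sum_l d_l \geq d + (k-1)$, with equality iff exactly one $d_{l^*} = d$ and every other $d_l = 1$, and show that any departure from this equality adds at least $+1$ to $\sum d_l$ while reducing $\sum |I_l|$ by at most $1$, so $D$ grows by at least $+1$. In the equality regime, the distinctness constraint forces the light $I_l$'s to be pairwise distinct nonempty subsets of $[m]$, with at most one light $I_l$ allowed to equal $I^* \coloneqq I_{l^*}$ (since that pair carries distinct $d$-values $d$ and $1$). I would then minimize $\sum |I_l|$ by comparing two subcases: (a) no light $I_l$ equals $I^*$, requiring $k$ distinct nonempty subsets of $[m]$ of total size at least $m + 2(k-m) = 2k - m$; (b) one light $I_l$ equals $I^*$, requiring only $k - 1$ distinct subsets, with minimum total $2 + (m-1) + 2(k-m-1) = 2k - m - 1$ when $I^* = \{i\}$ is a singleton. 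Subcase (b) wins by one and forces the minimizer to be precisely
\[
  z(d, \{i\}) \prod_{j=1}^m z(1, \{j\}) \prod_{l=1}^{k-m-1} z(1, I_l)
\]
with $|I_l| = 2$ --- that is, $a_{d,k}(i, I_1, \ldots, I_{k-m-1})$ --- of degree $2(d+k-1) + (2k-m-1) - 2k = 2d + 2k - m - 3$.

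To complete the argument I would verify that $a_{d,k}$ is nonzero by noting that $z(d, \{i\}) z(1, \{i\}) = \sum_{a < b}(x_a^{d-1} - x_b^{d-1}) y_a^i y_b^i \neq 0$ for $d \geq 2$ and $n \geq 2$, and extracting a surviving monomial from the full product via the ordering argument of Lemma \ref{least term} (the remaining factors contribute $y$-variables outside $\{y_a^i\}_a$). For the ``otherwise'' clause, direct case analysis suffices: if $d = 1$, all $d_l = 1$ forces pairwise distinct $I_l$'s, giving minimum degree $k$ (if $k \leq m$) or $2k - m$ (if $m < k \leq m + \binom{m}{2}$), with $U(1,k) = \{0\}$ when $k > 2^m - 1$; if $k \leq m$ and $d \geq 2$, the minimizer $z(d, \{i\}) \prod_{j \in J} z(1, \{j\})$ with $|J| = k - 1$ distinct has degree $2d + k - 2$. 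In every case the degree strictly exceeds $2d + 2k - m - 3$. The principal obstacle will be the uniform bookkeeping that rules out alternative near-minimal configurations --- two heavy factors, $r_{I^*} \geq 3$, or $|I^*| \geq 2$ --- and this reduces to the tradeoff inequality established in the second paragraph, namely that every extra unit of $\sum_l d_l$ saves at most one unit of $\sum_l |I_l|$.
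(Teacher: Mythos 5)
Your overall strategy — minimize $2\sum_l d_l + \sum_l|I_l|-2k$ subject to $\max_l d_l = d$ and nonvanishing of the product, treating the two summands as a tradeoff — matches what the paper leaves implicit behind "it follows from minimality... Thus the proof is done by an easy degree counting," and your final numerical conclusion $2d+2k-m-3$ is correct. However, there is a genuine error in the central algebraic input on which you hang the whole argument. You assert that for each fixed $I\subseteq[m]$ the $d$-values in $\prod_{l:I_l=I}z(d_l,I)$ must be pairwise distinct, "by a Vandermonde-type calculation." This is true only when $|I|$ is \emph{odd}: there $y^I_a y^I_b = -y^I_b y^I_a$, so $z(d_1,I)\cdots z(d_r,I)=\sum_{a_1<\cdots<a_r}\det\bigl(x_{a_j}^{d_i-1}\bigr)\,y^I_{a_1}\cdots y^I_{a_r}$, which vanishes iff two $d_i$ coincide. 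For $|I|$ \emph{even}, the $y^I_a$ commute; the determinant is replaced by a permanent, and for instance $z(1,\{1,2\})^2 = 2\sum_{a<b}y^1_a y^2_a y^1_b y^2_b \neq 0$ for $n\ge 2$ (the characteristic is prime to $n!$, so $2$ is invertible). This is consistent with the lemma statement itself, which places no distinctness requirement on the $I_l$'s appearing in $a_{d,k}$.

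Fortuitously, this flaw does not change your numerical bound: the count $\sum_l|I_l|\ge 2k-m-1$ in your subcase (b), and $\ge 2k-m$ in subcase (a), depends only on (i) the distinctness of the light singletons (which are odd-sized, so your Vandermonde argument does apply there), (ii) the fact that $|I_l|\ge 2$ for every non-singleton, and (iii) the fact that only one light factor can share a singleton $I^*$ with the heavy factor. None of these use distinctness of the doubletons, so the minimum $\sum|I_l|=2k-m-1$ and the degree $2d+2k-m-3$ stand; but you should excise the false distinctness premise and restate the count accordingly, lest a reader conclude that only products with \emph{pairwise distinct} doubletons are minimizers. Two further small issues: (a) the claim "$U(1,k)=\{0\}$ when $k>2^m-1$" in your \emph{otherwise} analysis is wrong — with repeated even subsets $U(1,k)\neq\{0\}$ for all $k\le n$, and the correct vanishing threshold is $k>n$ — though this does not affect the verdict that the degree exceeds $2d+2k-m-3$; and (b) the tradeoff inequality "each extra unit of $\sum_l d_l$ saves at most one unit of $\sum_l|I_l|$" is asserted rather than proved; since it is the step the paper also compresses into one sentence, you should at least sketch why it holds (each conversion of a doubleton into a singleton forces a strictly larger $d$-value to avoid clashing with an existing light singleton, costing one unit in $\sum d_l$).
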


\begin{proof}
  Let $z\coloneqq z(d_1,I_1)\cdots z(d_k,I_k)$ be a least degree element of $U(d,k)$. We may assume that $d_1=d$. Since $z$ is of degree $2d_1+\cdots+2d_k-2k+|I_1|+\cdots+|I_k|$, it follows from minimality of the degree of $z$ that $d_2=\cdots=d_k=1$ and $|I_1|=1$. Thus the proof is done by an easy degree counting.
\end{proof}

By Lemma \ref{dim 2n-m+1}, elements of $\bigoplus_{d+k=n+2}U(d,k)$ are of degree $\ge 2n-m+1$. Let $U$ denote the vector space spanned by elements of $\bigoplus_{d+k=n+2}U(d,k)$ of degree $2n-m+1$. Then $U=\{0\}$ for $n=m$, and $U$ is spanned by $a_{d,k}(i,I_1,\ldots,I_k)$ in Lemma \ref{dim 2n-m+1} for $n>m$.

For a graded vector space $V$, let $V_d$ denote the $d$-dimensional part of $V$.

\begin{prop}
  \label{V+U}
  For $n\ge m$, there is an isomorphism
  $$\H(m,U(n))^{\Sigma_n}_{2n-m+1}\cong V(m,n)_{2n-m+1}\oplus U.$$
\end{prop}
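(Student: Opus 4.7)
The proof splits into injectivity and surjectivity. For injectivity, Lemma~\ref{inj} gives $V(m,n)_{2n-m+1}\hookrightarrow\H(m,U(n))^{\Sigma_n}_{2n-m+1}$, and it remains to show the generators $a_{d,k}(i,I_1,\ldots,I_{k-m-1})$ of $U$ are linearly independent in $\H$ and span a subspace meeting $V(m,n)_{2n-m+1}$ trivially. My plan is to compute the least term (in the ordering on $\mathcal{M}(m,n)$) of each $a_{d,k}$ via Lemma~\ref{least term}: after ordering the factors as $z(1,\{1\}),\ldots,z(1,\{i-1\}),z(d,\{i\}),z(1,\{i\}),z(1,\{i+1\}),\ldots,z(1,\{m\}),z(1,I_1),\ldots,z(1,I_{k-m-1})$ (placing $z(d,\{i\})$ immediately before its sibling $z(1,\{i\})$), one reads off the least term
\[
x_i^{d-1}y_1^{\{1\}}\cdots y_{i-1}^{\{i-1\}}y_i^{\{i\}}y_{i+1}^{\{i\}}y_{i+2}^{\{i+1\}}\cdots y_{m+1}^{\{m\}}y_{m+2}^{I_1}\cdots y_k^{I_{k-m-1}}.
\]
The full tuple $(d,k,i,I_1,\ldots,I_{k-m-1})$ is recoverable from this monomial, giving linear independence. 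Moreover, the exterior subword $y_i^{\{i\}}y_{i+1}^{\{i\}}$ records the same exterior label $i$ at two adjacent spatial indices, each having $y$-multidegree $1$; this pattern cannot occur in the least term of any element of $V(m,n)$, whose exterior factors are $y_1^{I_1}\cdots y_k^{I_k}$ with strictly increasing $I_j$'s, so any repetition of an exterior label would force one of the involved factors to have size $>1$, increasing a $y$-multidegree.

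For surjectivity, Theorem~\ref{minimal generator U} says every element of $\H(m,U(n))^{\Sigma_n}_{2n-m+1}$ is a sum of products $z(d_1,I_1)\cdots z(d_k,I_k)$ with each factor in $\S(m,U(n))$. Setting $d:=\max\{d_j\}$, I would distinguish three cases: (i) if the $I_j$'s are pairwise distinct and $d+k\le n+1$, reordering lands the product in $V(m,n)_{2n-m+1}$; (ii) if the $I_j$'s are distinct but $d+k\ge n+2$, a direct minimum-degree computation (analogous to the proof of Lemma~\ref{dim 2n-m+1}) shows the degree is at least $2n-m+2$, so no such product contributes in degree $2n-m+1$; (iii) if the $I_j$'s have a repeat and $d+k=n+2$ with $m+1\le k\le n$ and $d\ge 2$, the product sits in $U(d,k)$ at its minimum degree, and Lemma~\ref{dim 2n-m+1} identifies it as a linear combination of $a_{d,k}$'s, hence in $U$.

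The main obstacle is the residual case of non-squarefree products with $d+k\ne n+2$ or with $(d,k)$ outside the regime $m+1\le k\le n,\ d\ge 2$. The plan here is to iterate the generalized Newton identity of Lemma~\ref{Newton}, together with the word-length filtration introduced in the proof of Lemma~\ref{stable range U}: each application replaces a non-conformant product by a sum lying in $V(m,n)_{2n-m+1}+U$ modulo terms of strictly smaller filtration, and the induction terminates when only squarefree pieces or $a_{d,k}$'s remain. The technical challenge is controlling the error terms produced so that no new non-conformant monomial appears in degree $2n-m+1$.
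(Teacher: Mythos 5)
Your plan for the injectivity half stops one step short of a proof. Having computed a candidate least term of each $a_{d,k}$ in $\widetilde{\H}(m,U(n))$ and observed that the tuple is recoverable, you conclude linear independence "in $\H$". But $\H(m,U(n))^{\Sigma_n}$ is a quotient of $\widetilde{\H}(m,U(n))^{\Sigma_n}$, and distinct least terms in the ambient ring do not by themselves survive the quotient; the paper bridges exactly this gap by checking that all the relevant least terms lie in a common basis $\overline{\mathcal{B}}(\alpha)$ of $\H(m,U(n))$ (Lemma~\ref{base}), and this requires a non-obvious choice of $\alpha$: when $k=m+1$ the exponent $d-1=n-m$ lands on $x_{m+1}$, and $x_{m+1}^{n-m}\notin\mathcal{B}(1)$, so one must route that exterior monomial through $\mathcal{B}((1\;m+1))$. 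Your argument does not engage with the choice of $\mathcal{B}(\sigma)$ at all, and it silently places the high $x$-power on $x_i$ rather than on the "sibling" position, which is precisely where the question of whether the monomial survives in $\mathcal{B}(1)$ becomes delicate. That is a genuine gap, not a stylistic omission.

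For surjectivity you correctly reduce to products of generators, but your "residual case" is where the argument comes apart. Take $m=2$, $n=3$, so $2n-m+1=5$. The product $z(1,\{1\})\,z(1,\{1,2\})^2$ has $k=3$, $d=\max d_j=1$, hence $d+k=4\le n+1$, degree $5$, and a repeated $I$. It is a nonzero element of $\H(2,U(3))^{\Sigma_3}_5$ of bidegree $(0,5)$ (polynomial degree $0$, exterior degree $5$). But every element of $U$ together with every element of $V(2,3)_5$ read with the strict inequality $I_1<\cdots<I_k$ has bidegree $(1,3)$ or $(2,1)$; since $\H$ is bigraded, $z(1,\{1\})z(1,\{1,2\})^2$ cannot lie in their span, so no amount of Newton-identity rewriting modulo filtration can land it in $V(m,n)_{2n-m+1}+U$. (Indeed one can count: $\dim\H(2,U(3))^{\Sigma_3}_5=10$ from Example~\ref{example Poincare series} and \eqref{U-SU}, while the strict-inequality $V(2,3)_5$ has dimension $6$ and $U$ has dimension $2$.) The resolution is that $V(m,n)$ must be read with $I_1\le\cdots\le I_k$ rather than $I_1<\cdots<I_k$ — this is forced already by the sentence in the proof of Theorem~\ref{low dim} asserting $\F\langle\S(m,U(n))\rangle$ lands inside $V(m,n)$ in low degree, which is false under the strict reading. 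Under the non-strict reading your whole residual case disappears: any product with $d+k\le n+1$ is already in $V(m,n)$, products with $d+k\ge n+3$ or outside the regime have degree $>2n-m+1$ by the degree counting in Lemma~\ref{dim 2n-m+1}, and products with $d+k=n+2$ at minimum degree are the $a_{d,k}$'s, which is essentially the paper's argument. So the Newton-identity machinery you invoke is a red herring, and in the form you state it the claim "each application replaces a non-conformant product by a sum lying in $V(m,n)_{2n-m+1}+U$" is false.
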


\begin{proof}
  By Lemma \ref{dim 2n-m+1}, there is a natural surjection
  $$V(m,n)_{2n-m+1}\oplus U\to\H(m,U(n))^{\Sigma_n}_{2n-m+1}.$$
  By Lemma \ref{inj}, if $a_{d,k}(i,I_1,\ldots,I_k)$ and the canonical basis of $V(m,n)_{2n-m+1}$ are linearly independent in $\H(m,U(n))^{\Sigma_n}_{2n-m+1}$ for $d+k=n+2$ and $d\ge 2$, then this map is injective.

  Let $\alpha\colon\mathcal{E}(m,n)\to\Sigma_n$ be a map sending $y_1^1\cdots y_m^my_{m+1}^my_{m+2}^{I_1}\cdots y_{m+k+1}^{I_k}$ to the transposition of $(1\;m+1)$ and other monomials to 1, where $|I_1|=\cdots=|I_k|=2$. We may assume $I_1<\cdots<I_k$. Then by Lemma \ref{least term}, the least term of $a_{d,k}(i,I_1,\ldots,I_k)$ is given by
  $$x_i^{n-m-k}y_1^1\cdots y_i^iy_{i+1}^i\cdots y_{m+1}^my_{m+2}^{I_1}\cdots y_{m+k+1}^{I_k}.$$
  On the other hand, the least terms of the canonical basis are given in the proof of Lemma \ref{inj}. Then since the exponents of $x_i$ in the least terms of the canonical basis of $V(m,n)_{2n-m+1}$ are less than $n-m$, the least terms of $a_{d,k}(i,I_1,\ldots,I_k)$ and the canonical basis of $V(m,n)_{2n-m+1}$ are mutually distinct elements of $\overline{\mathcal{B}}(\alpha)$, where we consider the map $\alpha$ because $x_{m+1}^{n-m}\not\in\mathcal{B}(1)$ and $x_{m+1}^{n-m}\in\mathcal{B}((1\;m+1))$. Thus $a_{d,k}(i,I_1,\ldots,I_k)$ and the canonical basis of $V(m,n)_{2n-m+1}$ are linearly independent, as desired.
\end{proof}

\begin{cor}
  \label{p_n surjective}
  The map $p_n\colon\H(m,U(n+1))^{\Sigma_{n+1}}\to \H(m,U(n))^{\Sigma_n}$ is an isomorphism in dimension $2n-m+1$.
\end{cor}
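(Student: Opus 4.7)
The map $p_n$ is induced by the inclusion $U(n)\hookrightarrow U(n+1)$, hence is a graded ring homomorphism. By naturality of the invariant-theoretic construction of the generators $z(d,I)$, it sends each $z(d,I)\in\overline{\S}(m,U(n+1))$ to $z(d,I)\in\overline{\S}(m,U(n))$, so its image contains $\S(m,U(n))$. Since $\S(m,U(n))$ generates $\H(m,U(n))^{\Sigma_n}$ by Theorem~\ref{minimal generator U}, $p_n$ is surjective in every degree, and the proof reduces to matching source and target dimensions in degree $2n-m+1$.

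Applying Theorem~\ref{low dim} to $U(n+1)$ and noting $2n-m+1\le 2(n+1)-m$, the source is identified with $\F\langle\S(m,U(n+1))\rangle_{2n-m+1}$. I next observe that $\S(m,U(n+1))$ and $\S(m,U(n))$ agree in all degrees $\le 2n-m+1$: any new generator $z(d,I)\in\S(m,U(n+1))\setminus\S(m,U(n))$ satisfies $d+|I|=n+2$, and since $|I|\le m$ forces $d\ge n+2-m$, its degree $2d+|I|-2=d+n$ is at least $2n-m+2$. Hence the source equals $\F\langle\S(m,U(n))\rangle_{2n-m+1}$ and $p_n$ in this degree coincides with the natural evaluation into $\H(m,U(n))^{\Sigma_n}_{2n-m+1}$.

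To finish I compare against Proposition~\ref{V+U}, which decomposes the target as $V(m,n)_{2n-m+1}\oplus U$. A basis of the free algebra $\F\langle\S(m,U(n))\rangle_{2n-m+1}$ partitions naturally into two types: ``$V$-type'' ordered products $z(d_1,I_1)\cdots z(d_k,I_k)$ with $I_1\le\cdots\le I_k$ and $k+\max d_i-1\le n$, and the remaining basis elements. The former maps bijectively to the canonical basis of $V(m,n)_{2n-m+1}$ via Lemma~\ref{inj}; for the latter, a degree analysis in dimension $2n-m+1$ rules out ordered products with $k+\max d_i-1\ge n+1$ having distinct $I$-indices, so these basis elements correspond precisely to the $a_{d,k}(i,I_1,\ldots,I_k)$ of Lemma~\ref{dim 2n-m+1} that span $U$. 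The main obstacle is this combinatorial bijection between the source basis and $V\oplus U$, verifying that the ``$V$-type'' and ``$a_{d,k}$-type'' together exhaust the free-algebra basis in this degree while respecting the constraint $d+|I|-1\le n$ on generators; combined with the linear independence of $V$- and $U$-summands in cohomology (from the proof of Proposition~\ref{V+U} via distinct least terms in $\overline{\mathcal{B}}(\alpha)$), this gives injectivity and completes the isomorphism.
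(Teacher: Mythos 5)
Your proof takes essentially the same route as the paper's: identify the source with the degree-$(2n-m+1)$ part of the free algebra on $\S$ via Theorem \ref{low dim}, use Proposition \ref{V+U} to write the target as $V(m,n)_{2n-m+1}\oplus U$, and match bases using Lemma \ref{inj} together with the enumeration of minimal-degree elements in Lemma \ref{dim 2n-m+1}. The opening surjectivity remark is harmless but unnecessary, and the ``distinct $I$-indices'' detour in the final paragraph is muddier than simply observing that Lemma \ref{dim 2n-m+1} already classifies the degree-$(2n-m+1)$ monomials with $k+\max d_i-1\ge n+1$ as exactly the $a_{d,k}$ (which do have a repeated singleton $\{i\}$), but the underlying argument coincides with the paper's.
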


\begin{proof}
  By Lemma \ref{dim 2n-m+1}, the map $p_n$ induces an isomorphism $V(m,n+1)_{2n-m+1}\cong V(m,n)_{2n-m+1}\oplus U$. Then the proof is done by Lemma \ref{inj}, Theorem \ref{low dim} and Proposition \ref{V+U}.
\end{proof}

We define
$$d_{m,n}\coloneqq\begin{cases}2n-m+1&(G_n=U(n),SU(n))\\2n+1&(G_n=Sp(n),SO(2n+1)).\end{cases}$$

\begin{thm}
  [Theorem \ref{homology stability intro}]\label{homology stability}
  Suppose that $n\ge m$ for $G_n=U(n),SU(n)$ and $m\ge 2$ for $G_n=Sp(n),SO(2n+1)$. Then the map
  \begin{equation}
    \label{induced map}
    H_*(\Hom(\Z^m,G_n)_1;\Q)\to H_*(\Hom(\Z^m,G_{n+1})_1;\Q)
  \end{equation}
  is an isomorphism for $*\le d_{m,n}$ and not surjective for $*=d_{m,n}+1$.
\end{thm}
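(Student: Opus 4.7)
The plan is to dualize to cohomology and analyze the ring homomorphism $p_n\colon \H(m, G_{n+1})^{W_{n+1}} \to \H(m, G_n)^{W_n}$ induced by restriction along $T_n \hookrightarrow T_{n+1}$. Over $\Q$, the homology map is (not) surjective in a given degree if and only if $p_n$ is (not) injective in that degree, so the whole theorem reduces to a statement about $p_n$.

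For the isomorphism part when $G_n = U(n)$, I would combine Theorem \ref{low dim} with Corollary \ref{p_n surjective}. The inclusion $\S(m, U(n)) \subset \S(m, U(n+1))$ has new elements $z(d, I)$ with $d + |I| - 1 = n + 1$, and the smallest of these, $z(n-m+2, [m])$, has degree $2n-m+2$; hence the free graded commutative algebras $\F\langle \S(m, U(n))\rangle$ and $\F\langle \S(m, U(n+1))\rangle$ agree through degree $2n-m+1$. Theorem \ref{low dim} applied to $U(n)$ and to $U(n+1)$ then yields the iso through degree $2n-m$, while Corollary \ref{p_n surjective} covers the boundary degree $2n-m+1$. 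The $SU(n)$ case follows from the splitting \eqref{U-SU}, and for $G_n = Sp(n)$ (resp.\ $SO(2n+1)$) the argument parallels via Theorem \ref{low dim Sp} (resp.\ Theorem \ref{cohomology SO Sp}); a direct check shows that the smallest new generator in $\S(m, Sp(n+1)) \setminus \S(m, Sp(n))$ has degree at least $2n+2$, so no boundary corollary is required and Theorem \ref{low dim Sp} alone gives the iso.

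For non-surjectivity at $d_{m,n}+1$, I would construct explicit nonzero classes in $\ker p_n$. In the unitary case, applying Lemma \ref{coinv} with $k = m$ exactly as in the proof of Theorem \ref{minimal generator U} yields an identity
\[
  c\cdot z(n-m+2, [m]) = R \quad \text{in } \H(m, U(n))^{\Sigma_n}
\]
with $c \ne 0$ and $R$ an explicit polynomial in the strictly smaller generators $z(d', I') \in \S(m, U(n))$. Lifting this relation verbatim, the element $w := c\cdot z(n-m+2, [m]) - R$ of degree $2n-m+2$ satisfies $p_n(w) = 0$, and it is nonzero in $\H(m, U(n+1))^{\Sigma_{n+1}}$ because otherwise the minimal generator $z(n-m+2, [m])$ would become expressible in terms of strictly smaller ones, contradicting Theorem \ref{minimal generator U}. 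For $G_n = Sp(n)$ a cleaner witness is $w := w(1, \{1,2\})^{n+1} \in \H(m, Sp(n+1))^{B_{n+1}}$: expanding with $(y_j^i)^2 = 0$ gives $w = (n+1)!\prod_{j=1}^{n+1}y_j^1 y_j^2 \ne 0$ of degree $2n+2$, while $p_n(w) = \bigl(\sum_{j=1}^{n} y_j^1 y_j^2\bigr)^{n+1}$ vanishes by pigeonhole; the $SO(2n+1)$ case then follows from Theorem \ref{cohomology SO Sp}.

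The main obstacle is the unitary non-surjectivity step: the verification that $w$ is nonzero in $\H(m, U(n+1))^{\Sigma_{n+1}}$ depends on the minimality assertion of Theorem \ref{minimal generator U}, which was itself the most technical input from Section \ref{Cohomology generators}. The symplectic construction, by contrast, is elementary thanks to the top-class structure of $\prod y_j^1 y_j^2$, which bypasses any analog of Lemma \ref{coinv}.
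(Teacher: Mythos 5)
Your proposal is correct and follows essentially the same route as the paper: pass to the cohomological projection $p_n$, use Theorem~\ref{low dim} (resp.\ Theorem~\ref{low dim Sp}) together with Corollary~\ref{p_n surjective} for the isomorphism range, and exhibit a kernel element for the failure at $d_{m,n}+1$. The only differences are cosmetic: for the symplectic non-injectivity you expand $w(1,\{1,2\})^{n+1}$ directly rather than citing Theorem~\ref{low dim Sp} for its non-vanishing, and for the unitary non-injectivity you make explicit the kernel element $c\cdot z(n-m+2,[m])-R$ via Lemma~\ref{coinv}, where the paper leaves this step compressed into a one-line appeal to $z(n-m+2,[m])$ being a minimal generator on one side but not the other.
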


\begin{proof}
  We assume that the ground field is $\Q$. Let $W_n$ denote the Weyl group of $G_n$. There is a natural projection
  $$p_n\colon\H(m,G_{n+1})^{W_{n+1}}\to\H(m,G_n)^{W_n}.$$
  By the construction of the isomorphism in Theorem \ref{Baird}, this projection satisfies a commutative square
  $$\xymatrix{H^*(\Hom(\Z^m,G_{n+1})_1;\Q)\ar[r]^\cong\ar[d]&\H(m,G_{n+1})^{W_{n+1}}\ar[d]^{p_n}\\
  H^*(\Hom(\Z^m,G_n)_1;\Q)\ar[r]^\cong&\H(m,G_n)^{W_n}.}$$
  Then it suffices to show that $p_n$ is an isomorphism in dimension $\le d_{m,n}$ and not injective in dimension $d_{m,n}+1$.

  First, we consider the case $G_n=Sp(n)$. Clearly, $p_n(w(d,I))=w(d,I)$ for $2d+|I|+\epsilon(|I|)-2\le 2n$. Then it follows from Theorem \ref{low dim Sp} that $p_n$ is an isomorphism in dimension $\le d_{m,n}$. Since we are supposing $m\ge 2$, there is an element
  $$w(1,\{1,2\})=y_1^1y_1^2+\cdots+y_k^1y_k^2\in\H(m,Sp(k))^{B_k}$$
  for $k=n,n+1$. It is obvious that $w(1,\{1,2\})^{n+1}=0$ in $\H(m,Sp(n))^{B_n}$, and by Theorem \ref{low dim Sp}, $w(1,\{1,2\})^{n+1}\ne0$ in $\H(m,Sp(n+1))^{B_{n+1}}$. Thus the statement is proved. By Theorem \ref{cohomology SO Sp}, the case $G_n=SO(2n+1)$ is also proved.

  Next, we consider the case $G_n=U(n)$. By definition, $p_n(z(d,I))=z(d,I)$ for $d+|I|=1\le n$. Then by Theorem \ref{low dim}, $p_n$ is an isomorphism in dimension $\le d_{m,n}-1$. The element $z(n-m+2,[m])$ belongs to $\S(m,U(n+1))^{\Sigma_{n+1}}$, but it does not belong to $\S(m,U(n))^{\Sigma_n}$. Since $z(n-m+2,[m])$ is of degree $2n-m+2$, this implies that $p_n$ is not injective in dimension $d_{m,n}+1$. Then it remains to show that $p_n$ is an isomorphism in dimension $d_{m,n}$, and this is already proved by Corollary \ref{p_n surjective}. Thus the proof for $G_n=U(n)$ is complete. The case $G_n=SU(n)$ follows from this result and \eqref{U-SU}.
\end{proof}

\begin{rem}
  \begin{enumerate}
    \item Let $m=1$ and $G_n=Sp(n),SO(2n+1)$. Since $\Hom(\Z,G_n)_1=G_n$, the map \eqref{induced map} is an isomorphism for $*\le 4n+2$ and not surjective for $*=4n+3$.

    \item Let $n<m$. Then quite similarly to the above proof, we can show that the map \eqref{induced map} is an isomorphism for $*\le n$ and not surjective for $*=n+1$.
  \end{enumerate}
\end{rem}


\section{Questions}\label{Questions}

In this section, we pose several questions arising in our work.

In Section \ref{Poincare series}, we gave explicit formulae for the Poincar\'e series of $\Hom(\Z^m,G)_1$ by investigating combinatorics of (signed) permutation. So the method does not directly apply to the exceptional Lie groups. On the other hand, by a direct calculation of the formula in Theorem \ref{Ramras-Stafa} using an explicit matrices for the canonical representation of the Weyl group of $G_2$, Ramras and Stafa \cite{RS1} gave an explicit formula for the Poincar\'e series of $\Hom(\Z^m,G_2)_1$. However, this formula does not show a connection to combinatorics of the Weyl group. Then we ask the following question.

\begin{quest}
  Is there a formula for the Poincar\'e series of $\Hom(\Z^m,G)_1$ in terms of combinatorics of the Weyl group of $G$ when $G$ is exceptional?
\end{quest}

A formula for the Poincar\'e series of $\Hom(\Z^m,G)_1$ when $G$ is exceptional must be more computable than the one in Theorem \ref{Ramras-Stafa}. We determined the top term of the Poincar\'e series by using our formula for the classical groups.

\begin{quest}
  Can we determine the top term of the Poincar\'e series of $\Hom(\Z^m,G)_1$ when $G$ is exceptional?
\end{quest}

As we can see in Theorem \ref{top term general}, the top term of the Poincar\'e series of $\Hom(\Z^2,G)_1$ is given in terms of simple factors of $G$. This was obtained by a formal calculation using our formula and the computer calculation in Appendix. Then we do not understand its topological meaning.

\begin{quest}
  Is there a topological interpretation of the identity in Theorem \ref{top term general}?
\end{quest}

In Section \ref{Cohomology generators}, we gave a minimal generating set of $\H(m,G)^W$ for the classical group $G$, where there is an isomorphism
$$H^*(\Hom(\Z^m,G)_1;\F)\cong\H(m,G)^W$$
for a field $\F$ of characteristic zero or prime to $|W|$. Our method is a direct calculation of invariants of (signed) symmetric groups, which does not apply to other Lie groups. However, for $m=1$ we have the theorem of Solomon, from which we can determine $\H(1,G)^W$. By the Shephard-Todd theorem, we have
$$\P(n)^W=\F[f_1,\ldots,f_n]$$
where $G$ is of rank $n$. Define a map $d\colon\P(n)\to\P(n)\otimes\Lambda(y_1,\ldots,y_n)$ by $dx_i=y_i$ together with the Leibniz rule. Then the theorem of Solomon states that
$$\widetilde{\H}(1,G)^W\cong\P(n)^W\otimes\Lambda(df_1,\ldots,df_n).$$
As in the proof of Theorem \ref{Ramras-Stafa},
$$\H(1,G)^W=\widetilde{\H}(1,G)^W/(f_1,\ldots,f_n).$$
Then we obtain
$$\H(1,G)^W=\Lambda(df_1,\ldots,df_n)$$
which is a well known result in topology. Applying this observation to \eqref{power sum}, we obtain Theorem \ref{minimal generator U}. Note that our generators $z(d,I)$ can be obtained by "differentiating" the power sums $p_1,\ldots,p_n$. Then Theorems \ref{minimal generator U}, \ref{minimal generator Sp} and Corollary \ref{minimal generator SU}, \ref{minimal generator SO} may be alternatively proved by a representation theoretic way.

\begin{quest}\label{Solomon}
  Is there a generalization of the theorem of Solomon, which gives a minimal generating set of $\H(m,G)^W$?
\end{quest}

By the theorem of Solomon, if Lie groups $G_1$ and $G_2$ are simple of the same rank, then the cohomology of $\Hom(\Z,G_1)_1$ and $\Hom(\Z,G_2)$ are isomorphic as ungraded rings. In \cite{T}, the second author determines the cohomology of $\Hom(\Z^2,G)_1$ when $G$ is simple Lie group of rank two, including the exceptional group $G_2$. In particular, he finds that cohomology of these spaces are isomorphic to each other as ungraded rings. Considering the case of $\Hom(\Z,G)_1$ above, this supports the affirmative answer to this question.

As we can see in Examples \ref{SU(2)} and \ref{SU(3)}, we do not have a general scheme to get relations among our minimal generating set of $\H(m,G)^W$.

\begin{quest}
  Is there a general scheme to get relations among our minimal generating set of $\H(m,G)^W$?
\end{quest}

The best answer to this question is an answer to Question \ref{Solomon}, which completely describes $\H(m,G)^W$ in terms of $W$.


\appendix
\section{Exceptional Lie groups}\label{Exceptional Lie groups}

We can calculate the Poincar\'e series of $\Hom(\Z^2,G)_1$ for the exceptional Lie group $G$ by Theorem \ref{Ramras-Stafa} with an assistance of computer. In this section, we only record the result.

\begin{align*}
  &P(\Hom(\Z^2,G_2)_1;t)=1+t^2+2t^3+t^4+2t^5+t^6+t^{10}+2t^{11}+2t^{13}+3t^{14}\\
  &P(\Hom(\Z^2,F_4)_1;t)=1+t^2+2 t^3+t^4+2 t^5+2 t^6+2 t^7+2 t^8+2 t^9+2 t^{10}+2 t^{11}\\
  &\quad+t^{12}+4 t^{13}+6 t^{14}+6 t^{15}+6 t^{16}+8 t^{17}+9
  t^{18}+6 t^{19}+6 t^{20}+6 t^{21}+6 t^{22}+6 t^{23}\\
  &\quad+5 t^{24}+8 t^{25}+9 t^{26}+8 t^{27}+14 t^{28}+12 t^{29}+8 t^{30}+10 t^{31}+7 t^{32}+4 t^{33}+4
  t^{34}\\
  &\quad+4 t^{35}+8 t^{36}+8 t^{37}+6 t^{38}+8 t^{39}+10 t^{40}+6 t^{41}+3 t^{42}+6 t^{43}+3 t^{44}+3 t^{48}\\
  &\quad+4 t^{49}+4 t^{51}+5 t^{52}\\
  &P(\Hom(\Z^2,E_6)_1;t)=1+t^2+2 t^3+t^4+2 t^5+2 t^6+2 t^7+3 t^8+4 t^9+4 t^{10}+8 t^{11}\\
  &\quad+8 t^{12}+10 t^{13}+13 t^{14}+12 t^{15}+15 t^{16}+20 t^{17}+20t^{18}+26 t^{19}+32 t^{20}+30 t^{21}\\
  &\quad+39 t^{22}+44 t^{23}+44 t^{24}+54 t^{25}+62 t^{26}+62 t^{27}+73 t^{28}+76 t^{29}+82 t^{30}+92 t^{31}\\
  &\quad+92 t^{32}+100t^{33}+110 t^{34}+106 t^{35}+109 t^{36}+120 t^{37}+118 t^{38}+126 t^{39}+129 t^{40}\\
  &\quad+124 t^{41}+132 t^{42}+128 t^{43}+115 t^{44}+128 t^{45}+126 t^{46}+116 t^{47}+118 t^{48}+108 t^{49}\\
  &\quad+101 t^{50}+104 t^{51}+89 t^{52}+88 t^{53}+99 t^{54}+78 t^{55}+68 t^{56}+70 t^{57}+55 t^{58}+48 t^{59}\\
  &\quad+51 t^{60}+40 t^{61}+44 t^{62}+40 t^{63}+21 t^{64}+24 t^{65}+26 t^{66}+14 t^{67}+18 t^{68}+20 t^{69}\\
  &\quad+10 t^{70}+10 t^{71}+5 t^{72}+5 t^{74}+6 t^{75}+6 t^{77}+7 t^{78}\\
  &P(\Hom(\Z^2,E_7)_1;t)=1+t^2+2 t^3+t^4+2 t^5+2 t^6+2 t^7+2 t^8+2 t^9+3 t^{10}+4 t^{11}\\
  &\quad+3 t^{12}+6 t^{13}+8 t^{14}+8 t^{15}+8 t^{16}+10 t^{17}+12 t^{18}+12 t^{19}+14 t^{20}+18 t^{21}+19 t^{22}\\
  &\quad+20 t^{23}+24 t^{24}+28 t^{25}+29 t^{26}+30 t^{27}+39 t^{28}+44 t^{29}+44 t^{30}+48 t^{31}+58 t^{32}\\
  &\quad+64 t^{33}+66 t^{34}+70 t^{35}+82 t^{36}+90 t^{37}+91 t^{38}+98 t^{39}+112 t^{40}+118 t^{41}+123 t^{42}\\
  &\quad+134 t^{43}+146 t^{44}+152 t^{45}+158 t^{46}+174 t^{47}+183 t^{48}+186 t^{49}+198 t^{50}+214 t^{51}\\
  &\quad+221 t^{52}+222 t^{53}+235 t^{54}+256 t^{55}+252 t^{56}+252 t^{57}+274 t^{58}+288 t^{59}+282 t^{60}\\
  &\quad+282 t^{61}+300 t^{62}+314 t^{63}+303 t^{64}+300 t^{65}+319 t^{66}+328 t^{67}+313 t^{68}+306 t^{69}\\
  &\quad+326 t^{70}+330 t^{71}+313 t^{72}+314 t^{73}+324 t^{74}+320 t^{75}+305 t^{76}+296 t^{77}+300 t^{78}\\
  &\quad+292 t^{79}+277 t^{80}+278 t^{81}+278 t^{82}+260 t^{83}+256 t^{84}+256 t^{85}+241 t^{86}+222 t^{87}\\
  &\quad+219 t^{88}+220t^{89}+200 t^{90}+178 t^{91}+181 t^{92}+180 t^{93}+160 t^{94}+140 t^{95}+145 t^{96}\\
  &\quad+158 t^{97}+129 t^{98}+104 t^{99}+118 t^{100}+112 t^{101}+84 t^{102}+70t^{103}+76 t^{104}+88 t^{105}\\
  &\quad+68 t^{106}+50 t^{107}+63 t^{108}+66 t^{109}+43 t^{110}+30 t^{111}+38 t^{112}+42 t^{113}+25 t^{114}\\
  &\quad+16 t^{115}+25 t^{116}+26t^{117}+17 t^{118}+12 t^{119}+17 t^{120}+22 t^{121}+12 t^{122}+6 t^{123}\\
  &\quad+12 t^{124}+6 t^{125}+6 t^{129}+7 t^{130}+7 t^{132}+8 t^{133}
\end{align*}

\begin{align*}
  &P(\Hom(\Z^2,E_8)_1;t)=1+t^2+2 t^3+t^4+2 t^5+2 t^6+2 t^7+2 t^8+2 t^9+2 t^{10}+2 t^{11}\\
  &\quad+2 t^{12}+2 t^{13}+3 t^{14}+4 t^{15}+3 t^{16}+6 t^{17}+6t^{18}+4 t^{19}+6 t^{20}+6 t^{21}+7 t^{22}+8 t^{23}\\
  &\quad+7 t^{24}+10 t^{25}+12 t^{26}+12 t^{27}+13 t^{28}+16 t^{29}+17 t^{30}+16 t^{31}+18 t^{32}+18 t^{33}\\
  &\quad+20t^{34}+22 t^{35}+21 t^{36}+60 t^{48}+64t^{49}+64 t^{50}+64 t^{51}+75 t^{52}+74 t^{53}+76 t^{54}\\
  &\quad+90 t^{55}+97 t^{56}+100 t^{57}+107 t^{58}+116 t^{59}+122 t^{60}+126 t^{61}+124 t^{62}+128t^{63}\\
  &\quad+145 t^{64}+144 t^{65}+151 t^{66}+180 t^{67}+191 t^{68}+198 t^{69}+210 t^{70}+218 t^{71}+225 t^{72}\\
  &\quad+224 t^{73}+218 t^{74}+228 t^{75}+245 t^{76}+242t^{77}+258 t^{78}+298 t^{79}+310 t^{80}+328 t^{81}\\
  &\quad+345 t^{82}+348 t^{83}+361 t^{84}+354 t^{85}+347 t^{86}+364 t^{87}+374 t^{88}+382 t^{89}+413 t^{90}\\
  &\quad+452t^{91}+464 t^{92}+488 t^{93}+509 t^{94}+494 t^{95}+493 t^{96}+486 t^{97}+478 t^{98}+494 t^{99}\\
  &\quad+496 t^{100}+520 t^{101}+570 t^{102}+598 t^{103}+615t^{104}+650 t^{105}+668 t^{106}+640 t^{107}\\
  &\quad+625 t^{108}+616 t^{109}+607 t^{110}+614 t^{111}+606 t^{112}+646 t^{113}+707 t^{114}+708 t^{115}\\
  &\quad+721 t^{116}+758t^{117}+760 t^{118}+720 t^{119}+687 t^{120}+676 t^{121}+671 t^{122}+670 t^{123}\\
  &\quad+663 t^{124}+716 t^{125}+782 t^{126}+768 t^{127}+785 t^{128}+810 t^{129}+784t^{130}+744 t^{131}\\
  &\quad+693 t^{132}+672 t^{133}+668 t^{134}+650 t^{135}+652 t^{136}+704 t^{137}+753 t^{138}+736 t^{139}\\
  &\quad+745 t^{140}+756 t^{141}+710 t^{142}+678t^{143}+627 t^{144}+594 t^{145}+600 t^{146}+578 t^{147}\\
  &\quad+591 t^{148}+640 t^{149}+658 t^{150}+652 t^{151}+661 t^{152}+638 t^{153}+571 t^{154}+546 t^{155}\\
  &\quad+504t^{156}+456 t^{157}+460 t^{158}+448 t^{159}+468 t^{160}+506 t^{161}+498 t^{162}+512 t^{163}\\
  &\quad+521 t^{164}+474 t^{165}+417 t^{166}+400 t^{167}+369 t^{168}+322t^{169}+323 t^{170}+322 t^{171}\\
  &\quad+348 t^{172}+372 t^{173}+334 t^{174}+360 t^{175}+376 t^{176}+302 t^{177}+254 t^{178}+248 t^{179}\\
  &\quad+226 t^{180}+190 t^{181}+186t^{182}+202 t^{183}+228 t^{184}+240 t^{185}+208 t^{186}+230 t^{187}\\
  &\quad+247 t^{188}+178 t^{189}+147 t^{190}+146 t^{191}+124 t^{192}+104 t^{193}+93 t^{194}+110t^{195}\\
  &\quad+136 t^{196}+132 t^{197}+104 t^{198}+122 t^{199}+140 t^{200}+82 t^{201}+62 t^{202}+72 t^{203}\\
  &\quad+53 t^{204}+48 t^{205}+42 t^{206}+54 t^{207}+83t^{208}+74 t^{209}+56 t^{210}+68 t^{211}+71 t^{212}\\
  &\quad+36 t^{213}+24 t^{214}+30 t^{215}+17 t^{216}+16 t^{217}+12 t^{218}+16 t^{219}+39 t^{220}+30 t^{221}\\
  &\quad+19t^{222}+30 t^{223}+31 t^{224}+14 t^{225}+7 t^{226}+14 t^{227}+7 t^{228}+6 t^{229}+7 t^{230}\\
  &\quad+6 t^{231}+19 t^{232}+14 t^{233}+7 t^{234}+14 t^{235}+7t^{236}+7 t^{244}+8 t^{245}+8 t^{247}+9 t^{248}
\end{align*}


\end{document}